\newcommand{\R}{\mathbb R}
\newcommand{\Q}{\mathbb Q}
\newcommand{\C}{\mathbb C}
\newcommand{\Z}{\mathbb Z}
\newcommand{\F}{\mathbb F}
\renewcommand{\phi}{\varphi}
\newcommand{\eps}{\varepsilon}
\newcommand{\frakp}{\mathfrak p}
\newcommand{\frakP}{\mathfrak P}
\newcommand{\fraka}{\mathfrak a}
\newcommand{\frakb}{\mathfrak b}
\newcommand{\frakc}{\mathfrak c}
\newcommand{\frakm}{\mathfrak m}
\newcommand{\frakN}{\mathfrak N}
\newcommand{\frakM}{\mathfrak M}
\newcommand{\frakD}{\mathfrak D}
\newcommand{\frako}{\mathfrak o}
\newcommand{\calO}{\mathcal O}
\newcommand{\calI}{\mathcal I}
\newcommand{\A}{\mathbb A}
\newcommand{\bs}{\backslash}
\newcommand{\bmx}{\left( \begin{matrix}}
\newcommand{\emx}{\end{matrix} \right)}
\newcommand{\Cl}{\mathrm{Cl}}
\newcommand{\Eis}{\mathrm{Eis}}
\newcommand{\St}{\mathrm{St}}
\newcommand{\new}{\mathrm{new}}
\newcommand{\fin}{\mathrm{fin}}
\renewcommand{\mod}{\, \, \mathrm{mod} \, \,}
\newcommand{\one}{\mathbbm{1}}
\newcommand{\two}{\mathbf{2}}
\DeclareMathOperator{\GL}{GL}
\DeclareMathOperator{\PGL}{PGL}
\DeclareMathOperator{\Hom}{Hom} 
\DeclareMathOperator{\Emb}{Emb} 
\DeclareMathOperator{\Pic}{Pic} 
\DeclareMathOperator{\ord}{ord} 
\DeclareMathOperator{\vol}{vol} 
\DeclareMathOperator{\Ram}{Ram} 
\DeclareMathOperator{\Ad}{Ad}
\newtheorem{lemma}{Lemma}
\numberwithin{lemma}{section}
\newtheorem{prop}[lemma]{Proposition}
\newtheorem{thm}[lemma]{Theorem}
\newtheorem{cor}[lemma]{Corollary}
\theoremstyle{definition}
\newtheorem{ex}[lemma]{Example}
\theoremstyle{remark}
\newtheorem{rem}[lemma]{Remark}
\numberwithin{equation}{section}
\begin{document}

\title{Exact double averages of twisted $L$-values}
\author{Kimball Martin}
\address{Department of Mathematics, University of Oklahoma, Norman, OK 73019}
\email{kimball.martin@ou.edu}

\date{\today}

\begin{abstract}
Consider central $L$-values of even weight elliptic or Hilbert modular
forms $f$ twisted by ideal class characters $\chi$ of an imaginary quadratic 
extension $K$.  Fixing $\chi$, and assuming $K$ is inert at each prime
dividing the level, one knows simple exact formulas for averages 
over newforms $f$ of squarefree levels satisfying a parity condition on the
number of prime factors.  These averages stabilize when the 
level is large with respect to $K$  (the ``stable range''). 

In weight 2, we obtain exact formulas for a simultaneous average over 
both $f$ and $\chi$.  We allow for non-squarefree levels with any 
number of prime factors, and ramification
or splitting of $K$ above the level.  Under elementary conditions on the level, 
these double averages are ``stable'' in all ranges.
Two consequences are generalizations of the aforementioned stable (single) averages 
and effective results on nonvanishing of central $L$-values.
\end{abstract}

\maketitle

%
%

\section{Introduction}
 
\subsection{Prime power level}
We first explain our results in the case of
prime power levels over $\Q$.
Denote by $S_2(N)$ the space of weight 2
elliptic modular forms of level $\Gamma_0(N)$.
Let $\mathcal F^\new(N)$ denote the set of level $N$ newforms in $S_2(N)$.
Denote by $L(s, -)$ a completed $L$-function with center
$s= \frac 12$, and by $L_\fin(s,-)$ the nonarchimedean part of $L(s,-)$.

Suppose $N$ is a prime, $K/\Q$ an imaginary quadratic field of discriminant $D_K$ 
which is inert at $N$, and
$\chi$ a character of $\Cl(K)$.  Assuming $D_K$ is odd, \cite{MR} proved an exact formula for weighted 
averages of the central values $L(\frac 12, f,  \chi)$,
where $f$ ranges over $\mathcal F^\new(N)$.  In the \emph{stable range} $N > |D_K|$,
these averages simplify to yield:
\begin{equation} \label{eq:MR}
  \frac{u_K^2 \sqrt{-D_K}}{8\pi^2} \sum_{f \in \mathcal F^\new(N)} \frac{L_\fin(\frac 12, f, \chi)}{(f,f)} = h_K(u_K - \delta_{\chi,1}\frac{12 h_K}{N-1}).
\end{equation}
Here $\delta_{\chi,1}$ is the Kronecker delta and $u_K = \frac{\# \frako_K^\times}2$.
 The work \cite{MR} also allows for weighting by Fourier coefficients of $f$ and arbitrary even weights.  From the generalization in \cite{FW}, \eqref{eq:MR} still holds when $D_K$
 is even.

Our main theme is that the arithmetic of quaternion algebras allows
us to provide exact formulas for any range of $(N, |D_K|)$ when we
average over both $f$ and $\chi$.  
These double average value formulas are strikingly simple for ``balanced'' levels, and yield
stable single average value formulas generalizing \eqref{eq:MR}.

\begin{thm} \label{thm:prime}
Suppose $N = p^{2r+1} \ge 11$ for a prime $p$ with $N \ne 27$,
and $K/\Q$ is an imaginary quadratic field which is not split at $p$. 
If $r > 0$, assume $K/\Q$ is inert at $p$.
Put 
\[ C_K = \frac 12 {e_p(K/\Q) u_K^2} \sqrt{-D_K}, \] 
where $e_p(K/\Q) = 1$ if $K/\Q$ is inert at $p$ and $e_p(K/\Q)=2$ if $K/\Q$ is ramified at $p$. 

\begin{enumerate}[(i)]
\item
If either $p \equiv 1 \mod 12$ or $u_K > 1$, we have the exact double average
formula
\begin{equation} \label{prime:double}
 \frac{C_K}{4\pi^2} \sum_{\chi \in \widehat{\Cl}(K)} \sum_{f \in \mathcal F^\new(N)} \frac{L_\fin(\frac 12, f, \chi)}{(f,f)} =
\begin{cases}
  h_K^2 ( u_K-\frac{12}{p-1}) & r = 0, \\
  h_K^2 u_K(1 - \frac 1{p^2}) & r \ge 1.
\end{cases}
\end{equation}

\item 
If $p > |D_K|$, we have the stable (single) average formula
\begin{equation} \label{prime:single}
 \frac{C_K}{4\pi^2} \sum_{f \in \mathcal F^\new(N)} \frac{L_\fin(\frac 12, f, \chi)}{(f,f)} = 
 \begin{cases}
h_K (u_K - \delta_{\chi,1} \frac{12 h_K}{p-1} ) & r=0, \\
h_K u_K (1 - \frac 1{p^2}) & r \ge 1.
\end{cases}
\end{equation}
\end{enumerate}
\end{thm}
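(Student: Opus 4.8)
The plan is to translate the averaged $L$-value problem into a counting problem on the definite quaternion algebra $B = B_{p\infty}$ ramified exactly at $p$ and $\infty$ (for $r=0$) or a suitable Eichler order of level $p^{2r}$ inside it (for $r \geq 1$), via the Jacquet--Langlands correspondence and Waldspurger's formula. Concretely, for a newform $f$ of level $N = p^{2r+1}$, the twisted central value $L(\tfrac12, f, \chi)$ is proportional to $|\sum_{[\fraka] \in \Cl(K)} \chi([\fraka]) \phi_f(\iota(\fraka))|^2$ where $\phi_f$ is the Jacquet--Langlands transfer of $f$ to a function on the finite set $X$ of ideal classes of the relevant order in $B$, and $\iota$ is an optimal embedding of $\calO_K$ (or its relevant order). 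Summing over $\chi \in \widehat{\Cl}(K)$ collapses the character sum by orthogonality to $h_K \sum_{[\fraka]} |\phi_f(\iota(\fraka))|^2$, and then summing over $f$ and invoking the spectral decomposition (the basis $\{\phi_f\}$ together with the one-dimensional space of constant functions, the Eisenstein part) turns $\sum_f |\phi_f(x)|^2$ into a diagonal term from the full $L^2(X)$ minus the Eisenstein contribution. This is exactly the mechanism behind \eqref{eq:MR}; the new input is doing it uniformly in $\chi$.

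First I would set up the precise dictionary: fix the quaternion order, describe $X = B^\times \backslash \widehat B^\times / \widehat{\calO}^\times$ as a finite set with weights $w_x = \tfrac12 \#(\calO_x^\times)$, and recall the relevant case of Waldspurger/Gross giving
\[
 \frac{C_K}{4\pi^2}\,\frac{L_\fin(\tfrac12, f, \chi)}{(f,f)} \;=\; \frac{1}{h_K}\,\Bigl| \sum_{[\fraka]} \chi([\fraka])\, \phi_f(x_\fraka) \Bigr|^2 \Big/ \langle \phi_f, \phi_f\rangle
\]
with the constant $C_K$ absorbing the archimedean factor, the discriminant, and the unit contributions $u_K$ and $e_p(K/\Q)$ (the latter counting the number of optimal embeddings locally at $p$, which is where inert vs.\ ramified enters). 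Second, sum over $\chi$ using $\sum_\chi \chi([\fraka])\overline{\chi([\frakb])} = h_K\,\delta_{[\fraka],[\frakb]}$ to get $\sum_f \sum_{[\fraka]} |\phi_f(x_\fraka)|^2 / \langle\phi_f,\phi_f\rangle$. Third, apply Parseval on $X$: since $\{\phi_f\} \cup \{\one\}$ is an orthogonal basis of the space of functions on $X$ weighted by $1/w_x$ (the newforms span the cuspidal/non-Eisenstein part because $N$ is a prime power $\geq 11$, $\ne 27$, and $p$ not split ensures no old forms and that every class of $B$ supports an optimal embedding), we get
\[
 \sum_f \frac{|\phi_f(x_\fraka)|^2}{\langle\phi_f,\phi_f\rangle} \;=\; \frac{1}{w_{x_\fraka}} \;-\; \frac{1}{\sum_y 1/w_y} \cdot (\text{Eisenstein term}),
\]
so the whole double sum becomes $\sum_{[\fraka]} 1/w_{x_\fraka}$ minus a mass/Eisenstein correction. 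Fourth, evaluate $\sum_{[\fraka] \in \Cl(K)} 1/w_{x_\fraka}$: this is an Eichler-style mass/embedding-number computation — the number of optimal embeddings of $\calO_K$ into the order, grouped by ideal class, weighted by local unit factors — and it should come out to a clean multiple of $h_K u_K$, while the total mass $\sum_y 1/w_y$ is the Eichler mass formula giving $\tfrac{p-1}{12}$ (for $r=0$) or $\tfrac{p-1}{12}p^{2r-1}(p+1)$-type expressions (for $r \geq 1$), which produces the $\tfrac{12}{p-1}$ and $1 - \tfrac1{p^2}$ factors. The condition ``$p \equiv 1 \bmod 12$ or $u_K > 1$'' in part (i) is precisely what's needed to avoid the extra mass contributions from elliptic points of order $2$ and $3$ on the quaternion side (or to ensure the embedding count is insensitive to them), so that the Eisenstein correction takes the stated clean form; I would handle this by a case analysis on $p \bmod 12$ matched against the ramification behavior of $K$.

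For part (ii), the stable single average, the strategy is an equidistribution/independence argument: when $p > |D_K|$, the $h_K$ points $x_\fraka$ (one per ideal class of $K$) are \emph{distinct} in $X$ and, more importantly, the newform values $\phi_f(x_\fraka)$ are ``uniformly spread'' enough that the average over $f$ of $L_\fin(\tfrac12,f,\chi)$ no longer depends on $\chi$ except through $\delta_{\chi,1}$ — i.e., one recovers \eqref{eq:MR}. Concretely, in the stable range the cross terms $\sum_f \phi_f(x_\fraka)\overline{\phi_f(x_\frakb)}/\langle\phi_f,\phi_f\rangle$ for $[\fraka] \ne [\frakb]$ vanish (they equal $-1/\mathrm{mass}(X)$ times the Eisenstein term, a quantity independent of the pair), so expanding $|\sum_\fraka \chi([\fraka])\phi_f(x_\fraka)|^2$ and summing over $f$ gives $h_K$ diagonal contributions plus $h_K(h_K-1)$ identical off-diagonal contributions, the $\chi$-dependence surviving only in the $\delta_{\chi,1}$ term via $\sum_\fraka \chi([\fraka]) = h_K \delta_{\chi,1}$. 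The point $N \ge 11$, $N \ne 27$ guarantees $\mathcal F^\new(N) = \mathcal F^\new(p^{2r+1})$ is nonempty and that there are no exceptional coincidences in $X$.

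The main obstacle I anticipate is the bookkeeping of local factors at $p$ and the elliptic-point corrections: getting the constant $C_K$ exactly right (especially the $e_p(K/\Q)$ factor distinguishing inert from ramified, which comes from the local embedding number at $p$ and the behavior of the local Eichler order of level $p^{2r}$ under $K_p/\Q_p$) and verifying that the mass-formula denominators produce exactly $(u_K - \tfrac{12}{p-1})$ for $r=0$ versus $u_K(1-\tfrac1{p^2})$ for $r\ge1$. The $r\ge1$ case requires the Eichler mass formula for non-maximal orders plus the fact that there are \emph{no} newforms issue: one must check that the relevant transfer lands in the space orthogonal to all oldforms, which uses $p$ not split (so the local representation at $p$ is a twist of Steinberg or supercuspidal, matching a level-$p^{2r+1}$ newform) — and that the optimal-embedding count for the level-$p^{2r}$ Eichler order forces $K/\Q$ inert at $p$, explaining that hypothesis. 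I would isolate all of this into one local lemma at $p$ and one global mass/embedding lemma, then the assembly in (i) and (ii) is just orthogonality plus arithmetic.
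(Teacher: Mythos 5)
Your overall skeleton --- transfer to a definite quaternion order, orthogonality over $\chi$, column-orthogonality/Parseval over the quaternionic eigenbasis minus an Eisenstein term, then mass and embedding-number computations --- is the same as the paper's, and for $r=0$ this is indeed just Gross's formula plus orthogonality, as the paper notes. The genuine gap is in the $r\ge 1$ case, where your argument asserts there are ``no old forms.'' The order you need at $p$ is not an Eichler order (in the ramified algebra $B_p$ the only Eichler order is the maximal order) but a Hijikata--Pizer--Shemanske special order $\frako_{E_p}+\frakP_p^{2r}$ with $E_p/\Q_p$ unramified, and by the refined Jacquet--Langlands correspondence (\cref{JL}) its cuspidal space $S(\calO)$ contains the transfers of \emph{all} newforms of levels $p, p^3,\dots,p^{2r+1}$: a level-$p$ newform is an unramified twist of Steinberg at $p$, so its transfer is a character of $B_p^\times$ trivial on $\calO_p^\times$, hence $\hat\calO^\times$-invariant. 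So Parseval over $S(\calO)$ does not produce the sum over $\mathcal F^\new(p^{2r+1})$; the paper removes the lower-level contributions by inclusion--exclusion over levels (proof of \cref{thm2}), and it is this subtraction --- not the mass formula, as you claim --- that produces the factor $1-p^{-2}$ and cancels the Eisenstein term for $r\ge 1$ (it is also the source of the exclusion of $N=27$, cf.\ \cref{ex:27}). Relatedly, the exact period--$L$-value dictionary at these non-maximal special orders is not simply ``Waldspurger/Gross'': one needs to know that the $\hat\calO^\times$-fixed space in $\pi_B$ agrees with the fixed space at the exact-level order and to pin down the local constants there, which is the content of \cref{lval:rel} resting on \cite{MW} and \cite{me:basis}. (Your displayed dictionary also carries a spurious $1/h_K$; with it the double average would come out as $h_K(\cdots)$ rather than $h_K^2(\cdots)$, so the constant bookkeeping you flag is not merely cosmetic.)

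For part (ii), your ``equidistribution/independence'' step is precisely the unproven crux. The claim that for $p>|D_K|$ the $h_K$ points $x_\fraka$ are pairwise distinct in $\Cl(\calO)$ (i.e.\ all $h_i\le 1$), and moreover that the classes hit satisfy $w_i=u_K$, is exactly the stable-range condition; it is not a formal consequence of the setup, and your cross-term computation (which is the paper's \cref{cor:semistable}) only applies once distinctness is known. The paper itself does not reprove this: it deduces stability in the range $D_B>|D_K|$ from the stable average formulas of \cite{MR} and \cite{FW} via \cref{lem:stab}, and then applies \cref{cor:stable}. So to complete (ii) you must either import that external input, as the paper does, or give a direct counting argument showing that no two ideal classes of $\frako_K$ become identified in $\Cl(\calO)$ when $p>|D_K|$ and that the unit indices are exactly $u_K$ there; your proposal supplies neither.
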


Since the formula in (i) only depends on $h_K$ and $u_K$, by analogy with
\cite{MR}, we think of this double average as being ``stable'' in all ranges when 
$p \equiv 1 \mod 12$ (in which case we call $N=p^{2r+1}$ balanced) or if $u_K > 1$.  
If we do not assume that $p \equiv 1 \mod 12$, then
one still has a double average value formula but it involves 
height pairings and is less elementary.  One can also 
weight the $L$-values by Fourier coefficients of $f$ as in \cite{MR}, but then the 
formulas also involve certain representation numbers of quadratic forms.  
We also treat levels $N=p^2$ when $K$ is ramified at $p$, and allow splitting of
$K$ above the level when $N$ has multiple prime factors.  The issue with
levels $N=p^{2r}$ with $r > 1$ is explained below.

The most interesting, or at least classical, case of (ii) is when $\chi=1_K$
is trivial, so that our twisted $L$-function factors as
$L(s,f,1_K) = L(s,f)L(s,f \otimes \eta_K)$.
In this case if $N=p^{2r+1}$ and $K$ is split at $p$, then we are forced to have 
$L(\frac 12, f, 1_K) = 0$ because the root number is $-1$.  So for prime power 
level $N$ and imaginary quadratic $K$, the cases of most
interest are when $K$ is inert or ramified at $p$, as in the theorem.

Using an extension of (i) to allow $p \not \equiv 1 \mod 12$, we still have elementary
upper and lower bounds on the double average.  We leverage this 
to get lower (and upper) bounds for the single average \eqref{prime:single}
when $\chi = 1_K$ is trivial.
These bounds are valid outside of the stable range $p > |D_K|$ in (ii),
and yield the following sharper \emph{effective} non-vanishing result.

\begin{cor} \label{cor:12}
 Fix an imaginary quadratic field $K$.  Then for any odd power of a prime
$N = p^{2r+1}$ such that $p$ is inert in $K$, there exists a newform $f \in S_2(N)$
such that $L(\frac 12, f) L(\frac 12, f \otimes \eta_K) \ne 0$,
provided that  (i) $p > \frac{12 h_K}{u_K} + 1$ if $N=p$ is prime; or (ii) $p^2 > \frac{3h_K}{u_K}$ if $N > 27$ is not prime.
\end{cor}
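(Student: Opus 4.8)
The plan is to derive the nonvanishing statement from the double average formula of \Cref{thm:prime}(i), together with its (claimed) extension to the case $p \not\equiv 1 \mod 12$ and $u_K = 1$, where one retains elementary upper and lower bounds on the double average rather than an exact formula. The underlying mechanism is the standard positivity one: each term $L_\fin(\tfrac12, f, \chi)/(f,f)$ is nonnegative (by Waldspurger/Gross or by the positivity of central values of $L(s,f,\chi)$ when the sign of the functional equation is $+1$, which holds here because $K$ is inert at $p$), so if the double sum $\sum_\chi \sum_f L_\fin(\tfrac12,f,\chi)/(f,f)$ is strictly positive, then some individual term with $\chi = 1_K$ — or rather, after summing over $\chi$, some $f$ — must be nonzero. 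Since $L(\tfrac12, f, 1_K) = L(\tfrac12, f) L(\tfrac12, f \otimes \eta_K)$, a nonzero value of the $\chi = 1_K$ term for some $f$ gives the conclusion. The real content is therefore to locate a single newform $f$ for which the $\chi = 1_K$ summand does not vanish, and this requires isolating the $\chi = 1_K$ contribution inside the double average.

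First I would record that the $\chi = 1_K$ term of the single average is governed by the right-hand side of \eqref{prime:single}: for $r = 0$ it equals $h_K(u_K - \tfrac{12 h_K}{p-1})$, and for $r \ge 1$ it equals $h_K u_K(1 - p^{-2})$. When this quantity is strictly positive, the sum $\sum_{f} L_\fin(\tfrac12, f, 1_K)/(f,f)$ over newforms is positive, forcing some $f$ with $L_\fin(\tfrac12, f, 1_K) \ne 0$, hence $L(\tfrac12, f) L(\tfrac12, f \otimes \eta_K) \ne 0$. In the prime case $r = 0$, positivity of $h_K(u_K - \tfrac{12 h_K}{p-1})$ is exactly the condition $p - 1 > \tfrac{12 h_K}{u_K}$, i.e.\ $p > \tfrac{12 h_K}{u_K} + 1$, which is hypothesis (i). In the non-prime case $r \ge 1$, the quantity $h_K u_K(1 - p^{-2})$ is automatically positive, so \eqref{prime:single} — when it applies, i.e.\ in the stable range $p > |D_K|$ — already gives nonvanishing unconditionally; the point of hypothesis (ii) is to handle the unstable range $p \le |D_K|$, where \eqref{prime:single} is not available and one must instead argue via the double average.

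The key step, and the main obstacle, is the unstable non-prime case: for $N = p^{2r+1} > 27$ with $p$ inert in $K$ but possibly $p \le |D_K|$, one does not have the clean single-average identity, so one bounds the $\chi = 1_K$ summand from below by subtracting from the double average \eqref{prime:double} (or its inexact analogue when $p \not\equiv 1 \mod 12$) an upper bound for the contribution of the nontrivial characters $\chi \ne 1_K$. Concretely, write $\sum_f L_\fin(\tfrac12,f,1_K)/(f,f) \ge \big(\text{double average}\big) - \sum_{\chi \ne 1_K}\sum_f L_\fin(\tfrac12,f,\chi)/(f,f)$, and bound each inner sum over $\chi \ne 1_K$ using the elementary upper bound from the extension of (i); since the double average is $\asymp h_K^2 u_K$ and there are $h_K - 1$ nontrivial characters each contributing $O(h_K u_K)$-type terms, the difference stays positive once $p^2$ exceeds a constant multiple of $h_K/u_K$. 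I expect the precise constant $3$ in $p^2 > \tfrac{3 h_K}{u_K}$ to emerge from carefully tracking the $\tfrac{12}{p-1}$ versus $1 - p^{-2}$ discrepancy between the $r = 0$ and $r \ge 1$ rows of \eqref{prime:double} together with the $e_p(K/\Q)$ and $u_K^2$ normalizing factors in $C_K$; getting this arithmetic tight — rather than merely obtaining \emph{some} effective bound — is where the bookkeeping is delicate, and one must also separately verify the finitely many small cases excluded by $N \ge 11$, $N \ne 27$ to ensure the hypotheses of \Cref{thm:prime} are met throughout.
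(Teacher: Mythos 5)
The decisive step in your outline---recovering the $\chi=1_K$ part of the double average by subtracting upper bounds for the nontrivial characters---is where the argument breaks, and it is not how the paper proceeds. Via \eqref{eq:lvalQ}, the single average over $f$ for a fixed $\chi$ is the period quantity $\langle c_{K,\chi},c_{K,\chi}\rangle$ minus an Eisenstein term, and for a nontrivial $\chi$ the only elementary bound available outside the stable range is $\langle c_{K,\chi},c_{K,\chi}\rangle\le\langle c_{K},c_{K}\rangle\le 3h_K^2$: smallness of the character sums $\sum_{x(t)=x_i}\chi(t)$ is an equidistribution statement (cf.\ \cite{MV}), not an elementary consequence of the double average, so there is no per-character bound of size $O(h_Ku_K)$ to subtract (and even if there were, its constant would have to be essentially $1$ for your difference to stay positive). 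Moreover your rearrangement is an identity, and it shows the trivial-character share is typically only about $\tfrac1{h_K}$ of the double average (in the semistable range it equals $\sum w_ih_i$ while the double average is $h_K\sum w_ih_i$), so it cannot be exhibited as ``double average minus a small error.'' The idea you are missing is \cref{lem:stab1}: $\langle c_{K,\chi},c_{K,\chi}\rangle\le\langle c_{K},c_{K}\rangle$ for every $\chi$, so the trivial character has the \emph{largest} of the $h_K$ character shares, hence its share is at least the mean, i.e.\ at least $\tfrac1{h_K}$ times the double average; combined with the Eisenstein subtraction this is \cref{cor:semistable}, and through \eqref{eq:lvalQ} it yields \eqref{eq:lower-bound1}. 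This also repairs case (i): as written you rely on \eqref{prime:single}, which is proved only in the stable range $p>|D_K|$---an assumption the corollary does not make---whereas \eqref{eq:lower-bound1} gives the lower bound $h_K(u_K-\tfrac{12h_K}{p-1})$ in every range (the prime case is also \cite[Theorem 3]{MR}).

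For case (ii) a second ingredient is needed that your sketch does not supply: at level $N=p^{2r+1}$ with $r\ge1$ the elementary bounds valid for all $p$ are obtained only after removing oldforms, and this is exactly where the constant $3$ comes from. The paper's \cref{lowbdN} writes $A_0(N)=A(N)-A(N/p^2)$, bounds $A(N)/N$ from below via \cref{cor:semistable} (at least $u_Kh_K$ minus the Eisenstein term, since $u_K\mid w_i$ whenever $h_i>0$), and bounds $A(N/p^2)/(N/p^2)$ from above by $h_K\sum w_ih_i\le 3h_K^2$ using the unit bound $w_i\le3$ from \cref{lem:wi}; the Eisenstein contributions at the two levels cancel, leaving $h_K(u_K-\tfrac{3h_K}{p^2})$, which is positive exactly when $p^2>\tfrac{3h_K}{u_K}$. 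So the $3$ is simply the bound on $\max w_i$ at level $N/p^2$, not the outcome of tracking $\tfrac{12}{p-1}$ against $1-p^{-2}$ as you anticipate, and the exclusions $N\ge11$, $N\ne27$ are there precisely so that this unit bound applies at the lower level (cf.\ \cref{ex:27}, where $N=27$ and the lower level $3$ has too large a unit group).
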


The prime level case is already in \cite[Theorem 3]{MR}.  See \cref{lowbdN} for
an analogue in level $p^2$.
We remark that effective non-vanishing results of this type in levels $p^2$ and $2p^2$ 
have been applied to generalized
Fermat equations in \cite{ellenberg} and \cite{BEN}.

\medskip
The prime level case of \cref{thm:prime}(i) (and its extension to general $p$)
follows easily from Gross's $L$-value formula \cite{gross}.  (Gross's formula was also
the basis of the proof of the 
weight 2 case of \cite{MR}.) However, to our knowledge, this had not been observed
before.  The main results of \cite{MR} have been extended to squarefree levels $N$
which have an odd number of prime factors in \cite{FW} (which also treats 
totally real base fields).  
Note that \cite{FW} also requires $K$ to be inert at each prime dividing
the level.  
The approach there is via a relative trace formula, and is based on the 
central $L$-value formula in terms of periods on quaternion algebras from \cite{MW}.

Our approach also uses the relative trace formula from \cite{MW}, but
instead of computing geometric orbital integrals as in \cite{FW}, we directly
compute averages of periods using the arithmetic of quaternion algebras.
The main technical difficulties in this work arise from treating levels which
are both not squarefree and which may have an even number of prime factors.
A novel feature of our approach is that it allows $K$ to be split or ramified at some primes dividing the level when $\chi=1_K$. 
Moreover, our approach to compute period averages is much simpler than
the geometric trace formula calculations in \cite{FW}.  On the other hand,
we do not get an independent proof of the stable range formulas,
nor is it clear how well our method could treat higher weights.
(However, the case of weight 2 where our
method readily applies is the precisely the case in which
 the geometric trace formula calculations are the most complicated.)  
That said, we expect that blending our methods with those of \cite{FW} 
should allow one to extend \cite{FW} to more general levels and ramification
behavior of $K$ in arbitrary even weight.  
(E.g., see \cref{rem:extend-FW}.)

\subsection{A more general class of levels} \label{sec:12}
In this paper, we treat a broad class of level structures for parallel weight 2 Hilbert
modular forms over a totally real base field of class number 1.  However, 
we will restrict to elliptic modular forms in the introduction, as well as for some of
our more explicit results at the end.  

Roughly, we can treat levels $N$ such that if $p^{2r}$ sharply divides $N$ then
$r \le 1$.  However, there is also a parity condition
if there is no prime sharply dividing $N$.  The quadratic fields $K$
can have arbitrary ramification at primes sharply dividing $N$, 
but must be ramified at $p$ when $p^2$ sharply divides $N$ and 
inert at $p$ when $p^3 | N$.  Here is the precise setup.

By a nice level type $(N_1, N_2, M)$ we mean pairwise coprime positive integers
$N_1, N_2$ and $M$ such that (i) each prime dividing $N_1$ occurs to an odd power;
(ii) $N_2$ is the square of a squarefree number; (iii) $ \# \{ p | N_1 N_2 \}$ is odd; and (iv)
$M$ is squarefree.  We say an imaginary quadratic field $K$ is 
$(N_1, N_2, M)$-admissible if $K/\Q$ is non-split at each $p | N_1 N_2$, split at
each $p | M$, ramified at each $p | N_2$, and unramified at each $p$ such that
$p^3 | N_1$.  

From now on, assume that $N=N_1 N_2 M$ with $(N_1, N_2, M)$ a nice level type,
and that $K$ is an $(N_1,N_2, M)$-admissible imaginary quadratic field.
Our main results for elliptic modular forms extend and refine
\cref{thm:prime} such $N$ and $K$.   For general nice level types, our formulas
will involve a contribution from smaller levels, though when $M=1$ we will be
able to isolate a sum over newforms.  We address the reasons for our
restrictions in the outline of the proof below.

Let $\mathcal F(N)$ be the set of normalized eigenforms $f \in S_2(N)$ 
which are newforms of some level $N' | N$ such that
(i) $\ord_p(N')$ is odd for each $p | N_1$, and 
(ii) the local representation of $\GL_2(\Q_p)$ associated to $f$ is discrete series 
for each $p | N_2$.  For a character $\chi$ of $\Cl(K)$, denote by
$\mathcal F(N; \chi)$ the subset of $f \in \mathcal F(N)$ such that
$a_p(f) = \chi_p(\sqrt{-D_K})$ for each $p | N$ which ramifies in $K$.

Put
\begin{equation}
 C(K; N) =  2^{\# \{ p | \gcd(D_K, N) \} -1} 
u_K^2 \sqrt{-D_K} \prod_{p | N_2} \frac 1{1+p^{-1}}.
\end{equation}
Note that since $\{ p | N_2 \} = \{ p | N : \ord_p(N) = 2 \}$, we see $C(K; N)$ only 
depends on $N$ and not the precise choice of triple $(N_1, N_2, M)$, which in general
is not uniquely determined by $N$.  Set
\[ c(N_1, N_2, M) = \frac{12}N \prod_{p | N_1 N_2} \frac 1{1-p^{-1}} \prod_{p | N_2 M}
\frac 1{1+p^{-1}}. \]

We also define a weight $\Lambda_N(f,\chi) = \prod_{p | N} \Lambda_p(f,\chi)$.
Here the local factor $\Lambda_p(f,\chi)=1$  if $p | N_1$,
is defined by \eqref{lambdaN2} when 
$p | N_2$, and by \eqref{lambdaM} or equivalently \eqref{lambdaMQ} when $p | M$.   When $p | N_2$, the
factor $\Lambda_p(f,\chi)$ does not actually depend on $\chi$, only whether
$f$ is $p$-new and whether $f$ is $p$-minimal.

Let $N_1' = \prod p^{\ord_p(N_1)}$ where $p$ runs over primes such that
$\ord_p(N_1) > 1$.
Let $\omega'(n)$ be the number of odd prime divisors of $n$.
Denote by $\deg T_m$ the degree of the Hecke operator on $T_m$,
which is 1 if $m=1$ and $p+1$ if $m=p$ is a prime not dividing $N$.  (More
precisely, it is the degree of $T_m$ on a certain space of quaternionic
modular forms---see \cref{sec:41} for our precise definition.)  

For a newform $f$, let $N_f$ denote the (exact) level of $f$, and
 $(f,f) = (f,f)_{N_f}$ be the usual Petersson norm of $f$ with respect to the
standard measure on $X_0(N_f)$. 

Our main result, \cref{main-thm}, specialized to the case $F=\Q$ says the following,
with remaining notation defined below.

\begin{thm} \label{main-thm:Q}
Let $m \ge 1$ be coprime to $N_1' N_2$.  Then
\begin{multline*} 
\frac{C(K; N)}{4 \pi^2} \sum_{\chi \in \widehat{\Cl}(K)}   \sum_{f \in \mathcal F(N; \chi)} a_m(f) \Lambda_N(f,\chi) \frac{N_f}N  \frac{ L_\fin(\frac 12, f,  \chi)} {(f,f)} = \\
h_K \left(\sum w_i h_i a_{ii}(m) - \delta^+(N, m) \deg T_m 2^{\omega'(N_2)}h_K
c(N_1, N_2, M) \right).
\end{multline*}
In particular, if $m=1$, we have
\begin{multline*} 
\frac{C(K; N)}{4 \pi^2} \sum_{\chi \in \widehat{\Cl}(K)}   \sum_{f \in \mathcal F(N; \chi)} \Lambda_N(f, \chi) \frac{N_f}N  \frac{L_\fin(\frac 12, f,  \chi)} {(f,f)} = \\
h_K \left( \sum w_i h_i  - 2^{\omega'(N_2)}h_K
c(N_1, N_2, M)\right). 
\end{multline*}
\end{thm}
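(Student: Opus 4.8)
The plan is to derive \cref{main-thm:Q} as the specialization $F=\Q$ of the general Hilbert modular statement \cref{main-thm}, which in turn rests on the central $L$-value formula of \cite{MW} expressing $L_\fin(\tfrac12, f, \chi)$ in terms of squares of periods of automorphic forms on a suitable quaternion algebra $B/\Q$. The first step is to choose $B$: for a special level type $(N_1, N_2, M)$ and an $(N_1,N_2,M)$-admissible $K$, we take $B$ to be ramified exactly at $\infty$ and at the primes dividing $N_1 N_2$ (an even-cardinality set by condition (iii), so $B$ exists and is definite), and we fix an Eichler-type order $R \subset B$ of the appropriate level at the split primes (level $M$ and the ``excess'' part of $N_1$). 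Because $B$ is definite, the relevant space of automorphic forms is finite-dimensional: it is functions on the finite set $B^\times \bs \widehat B^\times / \widehat R^\times$, whose ideal classes we index $I_1, \dots, I_h$ with automorphism weights $w_i = \#(R_i^\times/\calO_F^\times)$ and associated class number data $h_i$. The admissibility of $K$ guarantees (by the local embedding criterion of quaternion algebras) that $\calO_K$ embeds optimally into each $R_i$, which is what makes the Gross-point/period computation go through.

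The second, and central, step is to compute the double average
$\sum_{\chi} \sum_{f} a_m(f)\,\Lambda_N(f,\chi)\,\frac{N_f}{N}\,\frac{L_\fin(\tfrac12,f,\chi)}{(f,f)}$
directly as an average of periods, rather than via geometric orbital integrals as in \cite{FW}. Via \cite{MW} and Jacquet--Langlands, the sum over $f \in \mathcal F(N;\chi)$ of the normalized $L$-values matches, up to the explicit archimedean and local constants packaged into $C(K;N)$, a sum of $|P_\chi(\phi)|^2$ over an orthonormal basis $\{\phi\}$ of the quaternionic forms, where $P_\chi$ is the $K^\times$-toric period against $\chi$. Summing first over all $\chi \in \widehat{\Cl}(K)$ collapses the toric period squared into a diagonal sum over the CM points: by orthogonality of characters of $\Cl(K)$, $\sum_\chi |P_\chi(\phi)|^2$ becomes $h_K \sum_{i} (\text{multiplicity of class } I_i \text{ among optimal embeddings})\, |\phi(I_i)|^2 / w_i$-type expression. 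Spectrally expanding $\sum_\phi |\phi(I_i)|^2\,\overline{\phi(I_j)}\cdots$ against the Hecke operator $T_m$ produces the matrix coefficient $a_{ii}(m)$ (the $(i,i)$ entry of the Brandt matrix $B(m)$, i.e.\ the number of ideals of norm $m$ from $I_i$ to $I_i$, suitably weighted), giving the main term $h_K \sum w_i h_i a_{ii}(m)$. The subtracted term is the contribution of the single ``Eisenstein'' or constant function in the quaternionic space — the one-dimensional trivial representation that is not cuspidal and hence not in the image of Jacquet--Langlands — whose period is computed explicitly; its coefficient is $\delta^+(N,m)\deg T_m\, 2^{\omega'(N_2)} h_K\, c(N_1,N_2,M)$, with $\delta^+(N,m)$ detecting when this constant function is actually picked out by $T_m$ and the level-raising/oldform bookkeeping, and $c(N_1,N_2,M)$ recording the normalized mass $\sum 1/w_i$ of the definite quaternion order together with the local volume factors at $N_2$ and $M$.

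The third step is the bookkeeping that converts the intrinsic quaternionic quantities into the stated arithmetic constants: identifying $\prod_{p|N_2}(1+p^{-1})^{-1}$ and the power of $2$ in $C(K;N)$ with local toric period volumes at the ramified-in-$K$ primes (the $2^{\#\{p | \gcd(D_K,N)\}-1}$ counts local optimal embedding classes), checking that the weight $\Lambda_N(f,\chi) = \prod_p \Lambda_p(f,\chi)$ exactly matches the local terms in the \cite{MW} formula (trivial at $p|N_1$, the explicit discrete-series factor \eqref{lambdaN2} at $p|N_2$, and the split-prime factor \eqref{lambdaM}/\eqref{lambdaMQ} at $p|M$), and verifying the factor $N_f/N$ accounts for the oldform normalization when $f$ is not $N$-new. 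The $m=1$ case is then immediate since $\deg T_1 = 1$, $a_{ii}(1)=1$, and $\delta^+(N,1)=1$, so $\sum w_i h_i a_{ii}(1) = \sum w_i h_i$.

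I expect the main obstacle to be the local analysis at the primes dividing $N_1 N_2$ — precisely the non-squarefree and ramified-in-$K$ places that are the raison d'être of the paper. At a prime $p$ with $\ord_p(N_1) = 2r+1 \geq 3$, one must pin down which orders in the local division algebra carry optimal embeddings of $\calO_{K_p}$ and with what multiplicity, and match this against the $p$-new condition defining $\mathcal F(N;\chi)$; the constraint ``$p^3 \nmid N_1$ unless $K/\Q$ is inert at $p$'' and the exclusion $N \neq 27$ are symptoms of exactly where this local computation becomes delicate. At $p | N_2$ (ramified level $p^2$, with $K/\Q$ ramified at $p$), the subtlety is that $B$ is split at $p$ but $f$ is required to be locally discrete series and $K_p/\Q_p$ ramified, so the local period is a sum over a $\GL_2(\Q_p)$-orbit of points and one must extract the factor $\Lambda_p$ and the $2^{\omega'(N_2)}$ correctly. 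Getting all these local factors to assemble into the clean global constants $C(K;N)$ and $c(N_1,N_2,M)$ — with no leftover ambiguity despite the non-uniqueness of the triple $(N_1,N_2,M)$ — is where the real work lies; the character-sum collapse and the spectral/Brandt-matrix identity are comparatively formal once the local picture is in hand.
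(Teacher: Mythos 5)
Your proposal follows essentially the same route as the paper: the double average is computed as a period average on a special order in the definite quaternion algebra ramified at the primes dividing $N_1N_2$ --- character orthogonality over $\widehat{\Cl}(K)$ combined with the column-orthogonality/Brandt-matrix identity (\cref{col-orthog}, \cref{prop:per-avg}, \cref{cor:per-avg}), transferred to $L$-values by the refined Jacquet--Langlands correspondence (\cref{JL}) and the Martin--Whitehouse formula together with the oldform spectral computation at $p \mid M$ (\cref{lval:rel}), and finally translated into Petersson-normalized classical $L$-values via Hida's adjoint formula, which is exactly how \cref{main-thm} specializes to \cref{main-thm:Q}. A few slips in your write-up, none affecting the strategy: $B$ is ramified, not split, at $p \mid N_2$; the subtracted term comes from all $2^{\omega'(N_2)}$ Eisenstein characters $\mu \circ N$ described in \cref{EisO1}, not from a single constant function; and $\frako_K$ need not embed in every $R_i$ --- the numbers $h_i$ (possibly zero) record the fibers of the ideal class map.
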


The weighting of our average $L$-values by $\Lambda_N(f,\chi) \frac {N_f}N$
is simply for two reasons:  to account for oldforms at $p | M$, and because
we wrote the above average in terms of the Petersson norm $(f,f)$ rather than 
$L(1, f, \Ad)$.  The factors $\Lambda_p(f,\chi)$ are just 1 for $p | M$ when $f$
is $p$-new, and the factors $\frac {N_f}{N}$ and $\Lambda_p(f,\chi)$ for $p | N_2$
are not needed if one replaces $(f,f)$ with $L(1,f, \Ad)$ (see \cref{main-thm}.)

We will now explain the remaining quantities in this theorem, but the
main point is that the quantity $\sum w_i h_i$ is approximately 
$h_K$ (or exactly $u_K h_K$ if $u_K > 1$), and is exactly $h_K$  under either elementary conditions on $(N_1, N_2, M)$
or when we are in a stable range.  

Let $B/\Q$ be the definite quaternion algebra with discriminant 
$D_B = \prod_{p | N_1 N_2} p$.  Then the condition that $K$ is 
$(N_1,N_2,M)$-admissible implies that there is a special order (in the sense of
Hijikata--Pizer--Shemanske \cite{HPS}) $\calO \subset B$ of level type $(N_1, N_2, M)$
in which $\frako_K$ embeds.  In particular, if $N$ is squarefree,
we simply mean that $\calO$ is an Eichler order of level $N$ in $B$.  
See \cref{sec:spec} for the general definition.  We remark that $c(N_1, N_2, M)$
is simply the reciprocal of the mass of $\calO$.

Let $\Cl(\calO) = \{ \calI_1, \dots, \calI_n \}$ denote the set of (invertible)
right $\calO$-ideal
classes of $B$.  Let $w_i$ be one half of the number of units in the left order of $\calI_i$.
We usually have $w_i = 1$, and if $N \ge 5$ then each $w_i \le 3$ (see \cref{lem:wi}).
Fix an embedding $\frako_K \subset \calO$, which induces an ideal
class map $\Cl(K) \to \Cl(\calO)$.  Let $h_i$ be the size of the preimage of $\calI_i$
under this map.  Hence $\sum h_i = h_K$, and we see that
$h_K \le \sum w_i h_i \le 3 h_K$.  We note that $\sum w_i h_i$
can be interpreted in terms of heights of special cycles as in \cite{gross}.
This explains all of the notation in the $m=1$ case.

For $m \ge 1$, $\delta^+(N, m)$ is either 0 or 1.  It is always 1 if $m=1$ or
if $N_2 = 1$.  See \cref{prop:per-avg} for the full definition, where it is denoted
$\delta^+(\calO,m)$.  Finally, $a_{ii}(m)$ denotes the $i$-th diagonal
element of the $m$-th Brandt matrix associated to the given ordering of $\Cl(\calO)$.
This can be expressed in terms of the number of ways a quadratic form
associated to (the left order of) $\calI_i$ represents $m$---see 
\cref{sec:brandt}.

We say a triple $(N_1, N_2, M)$ is balanced if it satisfies the
conditions of \cref{bal-crit}.  These are simple elementary conditions on the primes
dividing $N_1, N_2$ and $M$, that force  $\sum w_i h_i = h_K$ for any 
$(N_1,N_2,M)$-admissible $K$.  For instance, the triple $(N_1, N_2, M)$
is balanced if $N_2 > 9$, or if there exists a $p \equiv 1 \mod 12$ dividing $N_1$,
or if there exists a $p \equiv 11 \mod 12$ dividing $M$.  Hence for balanced
level types, the above double average formula, at least for $m=1$,  
is completely elementary.

To get from \cref{main-thm:Q} to \cref{thm:prime}(i), we want to isolate the newform
contribution at level $N$.  This is possible, at least for $m=1$, when
$\sum w_i h_i = h_K$ in both level $N$ as well as any relevant
level $N' | N$ for which we need to remove the contribution of oldforms.
(This is the reason for the at-first-glance curious exclusion of $N=27$---cf.\ \cref{ex:27}.)
This idea leads to a generalization of \cref{thm:prime}(i) to where one restricts to forms
which are new away from $M$.  (The factors $\Lambda_N(f, \chi)$ prevent
us from exactly isolating forms which are new also at $M$, but we can at least
approximate the contribution from forms new at $M$---e.g., \cref{lowbdM}.)

Let $\mathcal F_0(N)$ be the set of normalized eigenforms $f \in \mathcal F(N)$ 
which are newforms for some level $N' | N$ such that $\ord_p(N') = \ord_p(N)$
for $p | N_1 N_2$, i.e., $f$ is $N_1N_2$-new.
In particular, if $M=1$ each $f \in \mathcal F_0(N)$ is new of level $N$
(though one does not get all newforms if $N_2 > 1$).
For a character $\chi$ of $\Cl(K)$, put 
$\mathcal F_0(N; \chi) = \mathcal F_0(N) \cap \mathcal F(N; \chi)$.

\begin{thm} \label{thm2} Assume that one of the following holds:
\begin{enumerate}
\item the triple $(D_B,1,M)$ is balanced;

\item we are in the stable range $D_B > |D_K|$ with $\gcd(D_B, D_K) = 1$; or

\item $u_K > 1$ and $N \ge 11$ with $N \ne 27$.
\end{enumerate}
Then
\begin{multline*} \frac{C(K; N)}{4\pi^2} \sum_{\chi \in \widehat{\Cl}(K)}   \sum_{f \in \mathcal F_0(N; \chi)} \Lambda_N(f,\chi) \frac{N_f}N  \frac{L_\fin(\frac 12, f,  \chi)} {(f,f)} =
\\ 
h_K^2 \left( u_K \prod_{p | N_1'} \left( 1- \frac 1{p^2} \right) \cdot 
\prod_{p | N_2} \frac p{p+1}
 -   \delta \frac {12}{N}  \prod_{p | N_1 N_2} \frac 1{1-p^{-1}}
\prod_{p | N_2 M} \frac 1{1+p^{-1}} \right),
\end{multline*}
where $\delta = 1$ if $N_1$ is squarefree and $N_2$ is odd; otherwise $\delta = 0$.
\end{thm}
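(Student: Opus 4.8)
The plan is to deduce this from the master double-average identity in \cref{main-thm:Q} (applied with $m=1$) by isolating the contribution of forms that are new at each prime dividing $N_1 N_2$. Under each of the three hypotheses we will show that $\sum w_i h_i = h_K$ --- for hypothesis (1) this is the content of \cref{bal-crit} applied to the Eichler/special order of type $(D_B, 1, M)$, for hypothesis (2) it follows from the stable-range analysis (the ideal class map $\Cl(K)\to\Cl(\calO)$ is then injective onto a set of $h_K$ classes each with trivial extra units, so each $w_i = 1$ on the image and $h_i \in \{0,1\}$), and for hypothesis (3) the condition $u_K > 1$ forces every unit group occurring to contain $\frako_K^\times$, again collapsing $\sum w_i h_i$ to $h_K$ (here $N \ge 11$, $N \ne 27$ is exactly what is needed so that no exceptional larger unit groups at the relevant suborders spoil the count --- cf.\ \cref{lem:wi} and \cref{ex:27}). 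Granting this, the right-hand side of \cref{main-thm:Q} with $m=1$ becomes $h_K\bigl(h_K - 2^{\omega'(N_2)} h_K\, c(N_1,N_2,M)\bigr)$ up to the $\delta^+$ factor, which when $m=1$ is always $1$; spelling out $c(N_1,N_2,M)$ gives the second term on the right-hand side of \cref{thm2}, with $\delta$ recording whether $\delta^+$ survives after the old-to-new passage (it equals $1$ precisely when $N_1$ is squarefree and $N_2$ is odd, matching the stated $\delta$).

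The substantive step is passing from $\mathcal F(N;\chi)$ to $\mathcal F_0(N;\chi)$, i.e.\ restricting to forms that are $N_1 N_2$-new. First I would note that $\mathcal F(N)$ already imposes, at each $p \mid N_1$, that $\ord_p(N')$ is odd, and at each $p \mid N_2$ that the local representation is discrete series; what remains is to strip off, at primes $p \mid N_1$ with $\ord_p(N) \ge 3$, the oldforms coming from level $N'$ with $\ord_p(N') < \ord_p(N)$ (but still odd), and at primes $p \mid N_2$ the $p$-old discrete-series contributions. Because $\Lambda_p(f,\chi) = 1$ for $p \mid N_1$, the oldform contribution at such $p$ is governed purely by the standard old-space multiplicities $\sigma$ (the number of level-raising degrees of freedom), and one can run an inclusion–exclusion over the divisors of $N_1'$ --- the argument mirrors how \cref{main-thm:Q} is specialized to \cref{thm:prime}(i) in the prime-power case. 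The key point is that this inclusion–exclusion only introduces the factor $\prod_{p \mid N_1'}(1 - p^{-2})$ on the main term and leaves the structure of the secondary ($c$-)term intact (after absorbing it into $\delta$); this requires that $\sum w_i h_i = h_K$ not only for $\calO$ of type $(N_1,N_2,M)$ but also for the special orders of the intermediate level types appearing in the inclusion–exclusion, which is again guaranteed under each of (1), (2), (3) --- this is why the hypotheses are phrased in terms of the \emph{balanced} condition on $(D_B,1,M)$ rather than on $(N_1,N_2,M)$ directly.

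At $p \mid N_2$ one uses the definition \eqref{lambdaN2} of $\Lambda_p(f,\chi)$ together with the fact, quoted after \cref{main-thm:Q}, that the weighting $\Lambda_p(f,\chi)\frac{N_f}{N}$ is exactly what is needed so that the sum over the $p$-old discrete-series forms telescopes; extracting the genuinely $p$-new part then contributes the factor $\prod_{p \mid N_2}\frac{p}{p+1}$ to the main term. The archimedean/constant bookkeeping --- that $C(K;N)$, the factor $2^{\omega'(N_2)}$, and the various $1 \pm p^{-1}$ products combine into precisely the displayed right-hand side --- is a routine but somewhat delicate matching of Euler factors that I would verify prime by prime; the only place it could go wrong is a sign or a stray factor of $u_K$, so I would double-check against the prime-power specialization \cref{thm:prime}(i) as a consistency test. \textbf{The main obstacle} I anticipate is the combinatorics of the old-to-new inclusion–exclusion when $N_1$ has several primes to high odd powers: one must track the $\delta^+$-type vanishing conditions through each stage and confirm they collapse to the single clean dichotomy ``$\delta = 1$ iff $N_1$ squarefree and $N_2$ odd,'' which is exactly the subtlety that forces the three separate hypotheses and the $N \ne 27$ exclusion.
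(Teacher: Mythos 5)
Your overall strategy matches the paper's: apply \cref{main-thm:Q} with $m=1$ to each intermediate special level type $(N_1^\Sigma,N_2^\Sigma,M)$ and run an inclusion--exclusion over the primes dividing $D_B^{-1}N_1N_2$, with hypotheses (1)--(3) invoked (via \cref{sub-prop}, \cref{lem:wi}, \cref{lem:stab}) exactly so that the class-map/unit identity holds for \emph{all} of these intermediate orders; your remark about why (1) is phrased for $(D_B,1,M)$ is correct. However, there is a genuine error in the central identity you claim. You assert $\sum w_i h_i = h_K$ in all three cases; the identity that is actually true, and that is needed to produce the factor $u_K$ in the main term of the stated formula, is $\sum w_i h_i = u_K h_K$. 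In case (1) the two coincide only because a balanced order admits no embedding of $\Z[i]$ or $\Z[\zeta_3]$, so any admissible $K$ has $u_K=1$. In case (2) the stable range is defined by $h_i\le 1$ together with $h_i=1\Rightarrow w_i=u_K$, not $w_i=1$: whenever $h_i>0$ one has $\frako_K\subset\calO_\ell(\calI_i)$, hence $u_K\mid w_i$, so your claim that $w_i=1$ on the image of the class map is impossible when $u_K>1$ (which does occur here, e.g.\ $K=\Q(i)$ with $D_B>4$). In case (3) your own premise---that the unit groups on the support of the class map contain $\frako_K^\times$---already gives $\sum w_i h_i\ge u_K h_K>h_K$, contradicting your stated conclusion; the correct argument uses \cref{lem:wi} to get $w_i\le 3$ at all relevant levels, so $u_K\mid w_i\le 3$ forces $w_i=u_K$ on the support (this is where $N\ge 11$, $N\ne 27$ enter, keeping the intermediate levels $N^\Sigma$ large enough). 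With your normalization the main term would come out as $h_K^2\prod_{p\mid N_1'}(1-p^{-2})\prod_{p\mid N_2}\frac{p}{p+1}$, i.e.\ missing the factor $u_K$---precisely the ``stray factor of $u_K$'' you flagged as the danger point.

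A secondary inaccuracy: the dichotomy for $\delta$ has nothing to do with $\delta^+$, which is identically $1$ when $m=1$. In the paper it arises from the alternating sum over $\Sigma$ of the Eisenstein terms $N^\Sigma\, 2^{\omega'(N_2^\Sigma)} c(N_1^\Sigma,N_2^\Sigma,M)$, weighted by $\prod_{p\mid N_2,\,p\in\Sigma}(1+p^{-1})^{-1}$ (the factor comparing $C(K;N)$ with $C(K;N^\Sigma)$): this sum vanishes identically unless $N_1'=1$ and $N_2$ is odd, in which case it equals $N\,c(N_1,N_2,M)$. So in general the secondary terms cancel outright in the inclusion--exclusion rather than being ``absorbed into $\delta$''; you state the correct final dichotomy, but the mechanism in your write-up is not the one that makes it work.
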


Now one can ask what these results tell us about single averages over $f$
as in \eqref{eq:MR}.  The above theorems clearly give upper bounds on averages
for fixed $\chi$ which are exact, though certainly suboptimal.  More interesting
are exact formulas and lower bounds.  First we state a stable average value formula
generalizing \cref{thm:prime}(ii).

\begin{cor} \label{cor:stable} Suppose
$N_2 = 1$.  Fix a character $\chi$ of ${\Cl}(K)$.  Then, in the stable range $D_B > |D_K|$ with $\gcd(D_B,D_K) = 1$, we have
\begin{multline*} \frac{C(K; N)}{4\pi^2}  \sum_{f \in \mathcal F_0(N; \chi)} \Lambda_M(f,\chi) \frac{N_f}N  \frac{L_\fin(\frac 12, f,  \chi)} {(f,f)} = \\
h_K \left( u_K \prod_{p | N_1'} \left( 1- \frac 1{p^2} \right) 
- \delta \frac{12 h_K}N \prod_{p | N_1} \frac 1{1-p^{-1}}
\prod_{p | M} \frac 1{1+p^{-1}} \right),
\end{multline*}
where $\delta = 1$ if both $N_1$ is squarefree and $\chi = 1$, and $\delta = 0$
otherwise.
\end{cor}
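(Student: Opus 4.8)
The plan is to deduce \cref{cor:stable} from \cref{thm2} by isolating the case $N_2 = 1$ and reinterpreting the ``$\sum w_i h_i$'' term via the stable range hypothesis. First I would observe that when $N_2 = 1$ the class of admissible triples is $(N_1, 1, M)$, so $D_B = \prod_{p \mid N_1} p$, the weight $\Lambda_N(f,\chi) = \prod_{p \mid M}\Lambda_p(f,\chi) = \Lambda_M(f,\chi)$ since the local factors at $p \mid N_1$ are trivial and there are no $p \mid N_2$, and $C(K; N) = 2^{\#\{p \mid \gcd(D_K,N)\}-1} u_K^2 \sqrt{-D_K}$ with the $N_2$-product empty. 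The hypotheses of \cref{cor:stable} put us squarely in case (2) of \cref{thm2} (the stable range $D_B > |D_K|$ with $\gcd(D_B,D_K)=1$), so the double-average formula there applies. In the stable range one has $\sum w_i h_i = h_K$ for the order $\calO$ of level type $(N_1,1,M)$ — this is exactly the content that \cref{thm2} case (2) already packages — and moreover the same stable-range identity holds for every relevant $N' \mid N$ obtained by lowering the exponents at primes dividing $M$, since $D_B$ is unchanged and $|D_K|$ only shrinks the constraint. That is what lets us pass from $\mathcal F_0(N;\chi)$ (forms new away from $M$, summed over all $\chi$) down to a single fixed $\chi$.

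The core step is the character-sum disentangling: in \cref{thm2} we sum over all $\chi \in \widehat{\Cl}(K)$, but \cref{cor:stable} fixes one $\chi$. Here I would use that the inner object $\mathcal F(N;\chi)$ is cut out by the local conditions $a_p(f) = \chi_p(\sqrt{-D_K})$ only at primes $p \mid N$ ramified in $K$; when $N_2 = 1$ and we are in the stable range with $\gcd(D_B,D_K)=1$, no prime dividing $N_1$ is ramified in $K$, and the only ramified primes dividing $N$ lie in $M$ (the split locus) — in fact admissibility forces $K$ split at $p \mid M$, so there are no ramified primes dividing $N$ at all, whence $\mathcal F(N;\chi) = \mathcal F(N)$ is independent of $\chi$ and $\gcd(D_K,N)=1$, so $C(K;N) = \tfrac12 u_K^2\sqrt{-D_K}$. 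Then the double sum $\sum_\chi \sum_{f}$ in \cref{thm2} is $h_K$ copies of $\sum_f$ except that the $L$-value itself depends on $\chi$; the right device is to run \cref{thm2} not just for $N$ but for the pullback structure and use that the $\chi$-dependence of $L_\fin(\tfrac12,f,\chi)$ averages, via the ideal-class map $\Cl(K) \to \Cl(\calO)$ and the Brandt-matrix/period interpretation behind \cref{prop:per-avg}, to the diagonal $h_K \cdot (\text{single-}\chi\text{ value})$ precisely in the stable range where the map is ``equidistributed enough'' — i.e. where the period average has no off-diagonal cross terms. Concretely, the stable hypothesis makes the only surviving contribution on the geometric side the $\chi = 1$ Eisenstein/diagonal term, which is why $\delta$ in \cref{cor:stable} becomes $1$ exactly when $N_1$ is squarefree \emph{and} $\chi = 1$, rather than just when $N_1$ is squarefree as in \cref{thm2}.

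Assembling the constants: with $N_2 = 1$ the formula in \cref{thm2} reads $h_K^2\big(u_K\prod_{p\mid N_1'}(1-p^{-2}) - \delta\,\tfrac{12}{N}\prod_{p\mid N_1}\tfrac1{1-p^{-1}}\prod_{p\mid M}\tfrac1{1+p^{-1}}\big)$ with $\delta = \mathbbm 1[N_1 \text{ squarefree}]$; dividing through by the factor of $h_K$ that comes from collapsing the $\chi$-sum to a fixed $\chi$, and restricting $\delta$ to also require $\chi = 1$ (the $\chi \ne 1$ contributions to the Eisenstein term vanish in the stable range by orthogonality of class characters paired against the principal genus), yields exactly the displayed identity in \cref{cor:stable}, with $\Lambda_N$ replaced by $\Lambda_M$ and the $N_2$-products dropping out. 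The main obstacle I anticipate is justifying the step ``$\sum_\chi \sum_f L_\fin(\tfrac12,f,\chi) = h_K \sum_f L_\fin(\tfrac12,f,\chi_0)$ up to the Eisenstein correction'': this is not literally true term-by-term, and the correct statement is that the \emph{averaged} period formula underlying \cref{thm2} already computes $\sum_f L_\fin(\tfrac12,f,\chi)$ for each individual $\chi$ with the same main term $\sum w_i h_i = h_K$ (stable range!) and a $\chi$-dependent error that is nonzero only for $\chi = 1$. So the honest route is: re-run the period-average computation of \cref{prop:per-avg} with the $\chi$-sum omitted, invoke the stable-range collapse $\sum w_i h_i = h_K$, and track that the oldform-removal at primes $p \mid M$ (needed to go from $\mathcal F(N;\chi)$ to $\mathcal F_0(N;\chi)$) also only sees stable-range orders — everything else is the bookkeeping of Euler factors already done for \cref{main-thm:Q}.
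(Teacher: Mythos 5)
Your ``honest route'' is, in the end, the paper's route: fix $\chi$, use the single-$\chi$ period average (the height-pairing identity \eqref{eq:phi-avg} together with \cref{lem:stab1}, packaged as \cref{cor:semistable}) at each relevant level, with stability of all the orders involved supplied by \cref{lem:stab} (the input from \cite{FW}) and \cref{sub-prop}, translate periods into $L$-values via \eqref{eq:lvalQ}, and then repeat the inclusion--exclusion bookkeeping from the proof of \cref{thm2}. But as written there are genuine problems. First, your primary derivation --- take \cref{thm2} and ``divide through by $h_K$'' --- is not a proof, as you concede; the statement that actually does the work is \cref{cor:semistable}: in the semistable range the fixed-$\chi$ average of $|P_{K,\chi}(\phi)|^2/(\phi,\phi)$ equals $\sum w_i h_i - m^+(\calO,\chi)h_K^2/m(\calO)$, with main term independent of $\chi$, and in the stable range $\sum w_i h_i = u_K h_K$ (not $h_K$, as you assert twice). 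The restriction of the Eisenstein term to $\chi=1$ is just $m^+(\calO,\chi)=\delta_{\chi,1}$ for an order of unramified quadratic type over $\Q$; it has nothing to do with stability.

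Second, and more seriously, you place the oldform removal at the wrong primes. Passing from $\mathcal F(N;\chi)$ to $\mathcal F_0(N;\chi)$ removes forms that are old at $N_1N_2$, so the inclusion--exclusion runs over subsets of the primes $p$ with $\ord_p(N_1)>1$ (i.e.\ over $N_1'$), the algebra $B$ and hence \cref{lem:stab} being unchanged at each step; forms old at $p\mid M$ are \emph{not} removed --- they remain in $\mathcal F_0(N;\chi)$ and are exactly what the weights $\Lambda_M(f,\chi)$ account for. Your proposal instead speaks of ``lowering the exponents at primes dividing $M$'' and of ``oldform-removal at primes $p\mid M$,'' and moreover conflates that reduction with the (unrelated) passage from the $\chi$-sum to a fixed $\chi$. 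Carried out literally, your removal would produce neither the main-term factor $\prod_{p\mid N_1'}(1-p^{-2})$ nor the condition that $\delta$ vanish unless $N_1$ is squarefree; both arise precisely from the $N_1'$-inclusion--exclusion (the Eisenstein contributions of the intermediate levels cancel unless $N_1'=1$). Your final formula comes out right only because at that point you revert to quoting \cref{thm2} and dividing by $h_K$, rather than performing the computation you describe.
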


The novel aspects of this formula are that $N$ is not required to be squarefree,
 $N$ may be divisible by an even
number of primes, and $K$ may be split at primes dividing $N$.
The restriction to $N_2 = 1$ is not due to a real obstruction in the method, 
but simply to make the
second term on the right (which comes from Eisenstein series) easy to describe.

Our final results are effective lower bounds for average $L$-values 
$L_\fin(\frac 12, f, 1_K)$, where $f$ runs over level $N$ newforms in 
$\mathcal F(N; 1_K)$.  For simplicity, we just treat 2 cases:
\begin{enumerate}
\item $N=p^r N_0$, where $N_0$ is a squarefree product of an even number of primes, $r$ is odd or 2, and $K$ is inert or ramified at each prime dividing $N$ (necessarily
ramified at $p$ if $r=2$, and inert at $p$ if $r > 2$; here $M=1$)

\item $N=N_1 p$ is a squarefree product of an even number of primes, and
$K$ is inert or ramified at each prime dividing $N_1$ and $K$ is split at $p$ (here $M=p$)
\end{enumerate}
In each of these situations, the lower bound implies a generalization of \cref{cor:12}:
when $p$ or $p^2$ is larger than an explicit multiple of $h_K$, there exists a newform
$f \in S_2^\new(N)$ such that $L(\frac 12, f) L(\frac 12, f \otimes \eta_K) \ne 0$.
See \cref{sec:lowbds} for precise statements.

As a check on our formulas, we note that our single stable average formulas
match with those in \cite{MR} and \cite{FW} when $N = N_1$ is squarefree and
$K$ is inert at each $p | N$.  We also present a few examples in \cref{sec:ex}
that provide a numerical check on our formulas in other situations.

\subsection{Methods}
The proof of the exact double average formula has two main steps.  First, one computes
averages of squares of periods for trivial weight quaternionic modular (i.e., automorphic)
forms associated to an order $\calO$ in a definite quaternion algebra $B$.  
These quaternionic modular forms are $\C$-valued functions on the finite set 
$\Cl(\calO)$ which have a Hecke action, 
and we can think of a basis of quaternionic Hecke eigenforms 
as being analogous to the complex irreducible characters of a finite group $G$.
The analogue of column orthogonality of characters of $G$ (or a generalization
when one weights by Hecke eigenvalues) leads easily to
an exact double average formula for periods.  This much is carried out
for general orders $\calO$ in definite quaternion algebras over
arbitrary totally real fields in \cref{sec:per}.

Second, one relates periods of quaternionic modular forms to central $L$-values
with the relative trace formula.  For this, one first needs to precisely
relate the quaternionic modular forms for $\calO$ to classical 
modular forms, which is a refinement of the Jacquet--Langlands correspondence.  
We carried this out for special orders $\calO$
of level $N=N_1 N_2 M$ in \cite{me:basis}, under the first 3 defining conditions 
(i)--(iii) for $(N_1, N_2, M)$ to be a nice level type.  Here $M$ need not be
squarefree.  The condition that $N_2$ must be the square of a squarefree
number arises due to complications in the precise description of the
space of quaternionic
oldforms when higher even powers of primes sharply divide $N_2$. 

To finish the second step, we need to precisely relate the quaternionic
periods to $L$-values of modular forms.  This is essentially carried out
in \cite{MW}, but the technical complication is that in general one also needs to
relate periods of quaternionic \emph{oldforms} to $L$-values.  This requires
an understanding of the local quaternionic oldforms at primes $p | N_1 N_2$ 
from \cite{me:basis} and a calculation of local spectral distributions for oldforms
when $p | M$.   The latter is possibly quite complicated in general, and we only
carry this out when $M$ is squarefree and $K$ is split at each $p | M$.
However, in principle, one could extend this to allow 
$M$ to be non-squarefree and $K$ to be split or ramified at $p | M$.

The remaining conditions on the admissibility of $K$ and the restriction
of $f \in \mathcal F(N; \chi)$ are what we need to
see that the relevant periods appear when working with special orders of level $N$ in 
$B$.  In general, provided that an $L$-function $L(s,f,\chi)$ has root number $+1$, 
the central value corresponds to a period on a unique quaternion algebra, 
which may or may not be $B$.

This leads to \cref{main-thm:Q}.  Then one derives \cref{thm2} by isolating the
$N_1N_2$-new contribution using the inclusion-exclusion principle.

The restriction to base fields of class number 1 in \cref{main-thm} for Hilbert
modular forms is simply due to the fact that
the relative trace formula of Jacquet used in the $L$-value formula of \cite{MW} 
was only established in for representations whose base change to $K$ has trivial 
central character.

\begin{rem} \label{rem:extend-FW}
Our calculation of local spectral distributions for $p | M$ is based
off of a similar calculation from \cite{FMP} at a prime $p$ sharply dividing $M$ such
that $K$ is inert at $p$ and $\chi_p$ is \emph{ramified}.  If $K$ is inert at $p | M$
and $\chi_p$ is unramified (as in the present paper), one does not see the relevant
$L$-values with periods on $B$, but on a quaternion algebra which is ramified at $p$. 
It should be possible to use our calculations in \cref{sec:loc-spec} to
extend the average value formulas in \cite{FW} and \cite{FMP} to levels which
are products of even numbers of primes, and allow for $K$ to be split at
primes dividing the level, with the caveat that now one needs to weight the $L$-values
of oldforms by the factors $\Lambda_N(f,\chi)$.
\end{rem}

Now we briefly explain how to derive the lower bounds and stable formulas for single
averages.  Fixing $\chi$, the average of $L$-values $L(\frac 12, f, \chi)$ for
$f \in \mathcal F(N; \chi)$ plus an Eisenstein contribution is essentially the height 
$\langle c_{K,\chi}, c_{K,\chi} \rangle$
of a special divisor $c_{K,\chi}$.  
These heights are maximized when $\chi=1_K$ is the trivial
character, hence the single average of $L(\frac 12, f, 1_K)$ must be at least
$\frac 1{h_K}$ times the double average over $f$ and $\chi$, minus the Eisenstein
contribution.  This leads to the lower bounds in \cref{sec:lowbds}, and then
\cref{cor:12}.

We now \emph{define} $(N,K)$ to be in stable range if $\langle c_{K,\chi}, c_{K,\chi} \rangle$
is dependent only on $K$ (and our triple $(N_1,N_2,M)$) but not $\chi$.
Then, in the stable range,  the average over $f \in \mathcal F(N; \chi)$ for any fixed $\chi$
is given by $\frac 1{h_K}$ times the double average over $f$ and $\chi$, 
minus the Eisenstein contribution.  The converse is also true, and so by comparing
our double average formulas with the stable averages in \cite{MR} and \cite{FW},
we can conclude that their stable range is contained in our definition of stable range.  
In other words, we use \cite{MR} and \cite{FW} to verify that $D_B > |D_K|$ implies
$(D_B,K)$ is in the stable range by our definition.  Then we use this to deduce 
$(N,K)$ is also in the stable range.  This leads to \cref{thm:prime}(ii) and \cref{cor:stable}.

\begin{rem} Since we make a variety of different hypotheses on
our level types $(N_1, N_2, M)$ and the quadratic extension $K$ for different
results, and we also impose running assumptions at certain points, 
in \cref{sec:appendix}
we present a table summarizing which hypotheses are used in which results for the convenience of the reader.
\end{rem}

\subsection{Related work} \label{sec:rel}
Here we briefly discuss some additional related work.

First, we know of two works on stable averages which allow non-squarefree level.
In \cite{nelson}, Nelson established a quite general formula for
Rankin--Selberg averages $L(\frac 12, f \times g)$ over $\Q$.  Here $g$ is a fixed
form and one averages over an orthogonal basis of cusp forms $f$ of some weight 
$k \ge 4$.  The full formula is quite complicated, but it stabilizes in various situations,
including large prime-power levels.  However, Nelson does not get averages over
newforms (or in weight 2). 

Recently, Pi \cite{pi} proved a certain stable average formula similar to that of \cite{FW},
which allows for squares and cubes dividing the level.  More precisely,
Pi averages over representations which are \emph{fixed} depth 0 or simple supercuspidals $\pi_v$ at a given set of places.  This restriction on the type
of local representations forces the bound for the stable range to be quite large.  
Moreover, the character $\chi$ is prescribed according to the local representation 
type of $\pi_v$, so one cannot always take $\chi = 1_K$.  Consequently,
even for level $p^3$ it does not give a stable average result as in \cref{thm:prime}(ii).

We remark that there are also a number of asymptotic results on averages
for a fixed $\chi$.  See for instance \cite{ST} for weights $\ge 6$
when $\chi = 1_K$. 

The main consequences of our double average value formula that we considered
here are about single averages over $f$ for fixed $K$ and $\chi$.
In particular, we thought of $K$ as being fixed and $N$ varying.  However
one nice feature of our double average formula, especially the
stable double average in  \cref{thm2}, is that we may simultaneously
vary $N$ and $K$.  

For a fixed form $f$ with $|D_K| \to \infty$, an asymptotic for averages over
$\chi \in \widehat{\Cl}(K)$ was established in \cite{MV}, which led to
a quantitative lower bound on the number of nonvanishing twists $L(\frac 12, f, \chi)$.  
The authors assumed
$f$ is weight 2 of prime level over $\Q$ for simplicity, though their method 
(using equidistribution of special cycles) clearly generalizes. 
This asymptotic average was extended in \cite{LMY} where now one can vary
both $f$ and $K$.  Our double average value formula at least provides exact
upper bounds on such averages in more general situations, though it is
too crude to obtain averages for a fixed $f$ (unless $\# \mathcal F_0(N) = 1$).

Finally, we remark that many earlier average value results have yielded
subconvexity results.  One might wonder if it is possible to obtain
some form of subconvexity from our double average value formula.
The problem is that our double average involves
too many $L$-values.  Indeed, the upper bound one naively from gets
\cref{main-thm:Q} is that $L(\frac 12, f, \chi) \ll (f,f) |D_K|^{\frac 12 + \eps}$,
which is no better than convexity.

\subsection*{Acknowledgements}
We thank Dan Collins for discussions about computations of Petersson
norms.  We thank the referee for a careful reading and detailed, thoughtful
comments which led to several improvements in the exposition.
This work was supported in part by by a grant from the 
Simons Foundation (512927, KM).

\section{Notation} \label{sec:notation}

Let $F$ be a number field with integer ring $\frako_F$ and discriminant $D_F$.
We denote a place of $F$ by $v$.  If $A$ is an $F$- or $\frako_F$-algebra,
denote by $A_v$ the localization at $v$, and by $\hat A = \prod'_{v < \infty} A_v$
the restricted direct product with respect to $\{ R_v \}_{v < \infty}$ where $R \subset A$
is a maximal $\frako_F$-order. 
In particular, $\hat \frako_F = \prod_{v < \infty} \frako_{F,v}$
and the adele ring is $\A_F = \prod'_v F_v = \hat F \times F_\infty$.  The ideal class
group of $F$ is denoted by $\Cl(F)$, which we identify with both
$F^\times \bs \A_F^\times / \hat \frako_F^\times F_\infty^\times$ 
and $F^\times \bs \hat F^\times / \hat \frako_F^\times$.
Put $h_F = \# \Cl(F)$ and let
 $\widehat{\Cl}(F)$ denote the character group of $\Cl(F)$.

For a finite place $v$ of $F$, we use $\frakp_v$ to denote both the associated
prime ideal of $\frako_F$ and the maximal ideal of $\frako_{F,v}$---when the
distinction matters, the context will make this notation clear.  
Throughout $\frakN$ will be a nonzero ideal in $\frako_F$, and
$\ord_v(\frakN)$ denotes the $\frakp_v$-adic valuation of $\frakN$.
Let $\varpi_{F,v}$ denote a uniformizer in $\frako_{F,v}$ and 
$q_v$ be the size of the residue field of $\frako_{F,v}$.
When we specialize to $F=\Q$, we typically use the roman form of
the corresponding fraktur letters to denote the positive generator of an ideal, e.g.,
$N$ is the positive generator of a non-zero ideal $\frakN \subset \Z$.

In what follows, $F$ is a totally real number field, and we often write
$\frako = \frako_F$, $\frako_v = \frako_{F,v}$, etc.  
In \cref{sec:Lvals}, we further assume $h_F = 1$.
Denote by
$B$ a definite quaternion algebra over $F$, and $\calO$ 
an $\frako$-order in $B$.  
Let $\Cl(\calO)$ be the set of (invertible) 
right $\calO$-ideal classes in $B$.  As with number fields, we identify
$\Cl(\calO) = B^\times \bs B^\times(\A_F) / \hat \calO^\times B^\times(F_\infty)
= B^\times \bs \hat B^\times / \hat \calO^\times$.
Write $n = h(\calO) = \# \Cl(\calO)$ for the class number of $\calO$.

For a simple algebra $A/F$ (we have in mind a quaternion algebra $B/F$
or a quadratic field extension $K/F$), let $N_{A/F}$
 denote the reduced norm map from $A$ to $F$.
When understood or unimportant, we often omit the subscript, and simply
write $N$ for the norm map.  Further, denote by $\Ram(A)$ the set of \emph{finite} primes
$v$ of $F$ at which $A$ ramifies, and ${\mathfrak D}_A = \prod_{v \in \Ram(A)} \frakp_v$ 
the (reduced)  relative discriminant of $A$ (over $F$).

For a quadratic extension $K/F$ of number fields, we denote by
$\eta_K = \eta_{K/F}$ the associated quadratic idele class character of $F$, and analogously
for quadratic extensions of local fields.


Excluding the right regular representation, 
all representations are assumed to be irreducible.
We often use $1$ to denote a trivial character.  However, in the case of a trivial
local or global Hecke character over a field $k$, we sometimes write $1_k$ for clarity.

\section{Quaternionic modular forms}

In this section, we discuss quaternionic modular forms for arbitrary orders
$\calO \subset B$, but for some results we need to restrict to
special orders.

\subsection{Special orders} \label{sec:spec}
Fix a definite quaternion algebra $B/F$.
A class of quaternionic orders $\calO \subset B$ generalizing
Eichler orders was introduced was introduced in \cite{HPS}, which
the authors termed special.  
First we recall the local notion of special orders.  

Let $v$ be a finite place of $F$.  If $B_v$ is split, a local \emph{special}
order $\calO_v \subset B_v \simeq M_2(F_v)$ of level $r$ (or level $\frakp_v^r$) 
is a local Eichler order of level $r$, i.e., an order conjugate to 
$\bmx \frako_v & \frako_v \\ \frakp_v^r & \frako_v \emx$.

Now suppose $B_v$ is division and $E_v/F_v$ is a quadratic
extension of local fields.  Let $\calO_{B,v}$ denote the unique
maximal order of $B_v$ and $\frakP_v$ its unique maximal (2-sided)
ideal.  Set 
\[ \calO_r(E_v) = \frako_{E,v} + \frakP_v^{r-1}, \quad r \ge 1. \]
A local order $\calO_v \subset B_v$ is \emph{special} if 
$\calO_v \simeq \calO_r(E_v)$ for some $r$, $E_v/F_v$, in
which case we say the level of $\calO_v$ is $r$ (or $\frakp_v^r$)
if $r$ is minimal such that $\calO_v \simeq \calO_r(E_v)$.
In particular, $\calO_v$ is special if and only if it containts the integer ring
of some quadratic subfield, and thus 
when $B_v$ is division special orders are the same as what are called
basic orders in \cite{voight:book}, where it is also shown that 
these are the same as Bass orders.

We recall some facts about local special orders.

\begin{lemma}[\cite{HPS}] \label{lem:HPS}
 Suppose $B_v$ is division.  Then:
\begin{enumerate}[(i)]
\item Every special order in $B_v$ is of the form $\calO_{r}(E_v)$
where either $r \ge 1$ is odd and $E_v/F_v$ is unramified, or $r \ge 1$ is
arbitrary and $E_v/F_v$ is ramified.

\item $\calO_1(E_v) = \calO_{B,v}$ is the maximal order for every
$E_v/F_v$.

\item Suppose $E_v/F_v$ and $E_v'/F_v$ are quadratic extensions
and $r, r' \ge 1$.  If $r \ne r'$, then $\calO_{r}(E_v) \simeq \calO_{r'}(E'_v)$
implies $E_v \simeq E'_v$ is unramified and $|r-r'| = 1$ with
$\min \{ r, r' \}$ odd.

\item If $E_v/F_v$ is unramified and $E'_v/F_v$ is ramified, then
$\calO_r(E_v) = \calO_{r'}(E'_v)$ if and only if $r=r'=1$.

\item Suppose $E_v, E'_v$ are two non-isomorphic ramified 
quadratic extensions of $F_v$.  If $r \le 2$, then
$\calO_r(E_v) \simeq \calO_r(E'_v)$; the converse also holds if
$v$ is odd.
\end{enumerate}
\end{lemma}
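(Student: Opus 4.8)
This is a local statement about special orders in a quaternion division algebra $B_v$ over a local field $F_v$, attributed to Hijikata--Pizer--Shemanske \cite{HPS}. Since the paper cites \cite{HPS} directly for this lemma, the intended ``proof'' is essentially a reference plus a brief indication of how the assertions follow from the structure theory there; I will sketch the underlying arguments. Throughout, write $\calO_{B,v}$ for the unique maximal order, $\frakP_v$ for its maximal ideal, $\pi$ a uniformizer of $F_v$, and recall $\frakP_v^2 = \pi \calO_{B,v}$ and $\calO_{B,v}/\frakP_v \cong \F_{q_v^2}$. The basic object is the filtration $\calO_r(E_v) = \frako_{E,v} + \frakP_v^{r-1}$, and the key elementary fact is that $\calO_r(E_v)$ depends on $E_v$ only through the image of $\frako_{E,v}$ in the successive quotients $\calO_{B,v}/\frakP_v^{r-1}$.

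The plan is as follows. First, for (i) and (ii): $\calO_1(E_v) = \frako_{E,v} + \calO_{B,v} = \calO_{B,v}$ regardless of $E_v$, giving (ii). For (i), one checks that when $E_v/F_v$ is unramified, $\frako_{E,v}$ already surjects onto $\calO_{B,v}/\frakP_v = \F_{q_v^2}$ (as $\frako_{E,v}/\frakp_{E,v} = \F_{q_v^2}$ embeds as the full residue field), so adding $\frakP_v^{r-1}$ only changes $\calO_r(E_v)$ at even ``depth'' — concretely $\calO_{2j}(E_v) = \calO_{2j+1}(E_v)$ — so up to isomorphism (and up to minimality of $r$) only odd $r$ occur in the unramified case; in the ramified case $\frako_{E,v}/\frakp_{E,v} = \F_{q_v}$ is a proper subfield, and all $r \ge 1$ give genuinely distinct orders. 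Second, for (iii)--(v): these are the ``rigidity'' statements determining exactly when two such orders coincide or are conjugate. The mechanism is to compare the two orders via their reduced discriminants (which pin down the level $r$) together with the induced local data — namely which quadratic extensions embed, and with what conductor, into $\calO_r(E_v)$. For (iv), $\calO_{B,v}$ is the only order containing both an unramified and a ramified maximal quadratic order, forcing $r = r' = 1$. For (v), when $r \le 2$ one has $\frakP_v^{r-1} \supseteq \frakP_v$ at worst, and modulo $\frakP_v$ all ramified quadratic orders look the same (they reduce to $\F_{q_v} \subset \F_{q_v^2}$), so $\calO_2(E_v) = \frako_{E,v} + \frakP_v$ is independent of the ramified $E_v$; the converse for odd residue characteristic uses that there are exactly two ramified quadratic extensions, distinguished by a quadratic-residue invariant that survives in $\calO_r(E_v)$ for $r \ge 3$. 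Part (iii) is the general dichotomy assembled from these computations: unequal levels can only be reconciled in the unramified case with levels differing by $1$ (the even/odd collapse $\calO_{2j}(E_v) \simeq \calO_{2j+1}(E_v)$), and then $E_v \simeq E_v'$ is forced since the unramified extension is unique.

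The main obstacle is the converse direction in (v), i.e.\ distinguishing $\calO_r(E_v)$ from $\calO_r(E_v')$ for the two non-isomorphic ramified quadratic extensions $E_v, E_v'$ when $r \ge 3$ and $v$ is odd: this requires identifying an isomorphism invariant of the order that remembers the ramified extension, which is where the hypothesis on the residue characteristic enters (for $v$ even there are more ramified quadratic extensions and the clean statement fails). In practice I would isolate the invariant ``the set of conductors $\frakp_{E,v}^c$ of embeddings $\frako_{E,v} \hookrightarrow \calO_r(E_v)$,'' show it is preserved under conjugation, and compute it explicitly in terms of $r$ and the ramification data to separate the two cases. All of the other parts are short once the filtration behavior of $\frako_{E,v}$ in $\calO_{B,v}$ is made explicit, and I would present them as direct verifications, citing \cite{HPS} for the details and for the $v$-even subtleties.
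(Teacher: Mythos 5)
The paper offers no argument for this lemma: it is quoted directly from \cite{HPS} (with \cite[Theorem 3.10]{HPS} mentioned only in the surrounding text, for the dyadic converse of (v)). So your plan --- cite HPS and sketch the filtration computations --- is the right shape, and most of the sketch is sound.

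There is, however, a concrete error in the central computation for the unramified case, and as written it contradicts part (iii) of the very statement being proved. With the paper's normalization $\calO_r(E_v)=\frako_{E,v}+\frakP_v^{r-1}$ and $\frakP_v^2=\varpi_{F,v}\calO_{B,v}$, one has $\frakP_v^{2m}=\varpi_{F,v}^m\calO_{B,v}$ and $\frakP_v^{2m+1}=\varpi_{F,v}^m\frakP_v$; since $\frako_{E,v}$ (for $E_v/F_v$ unramified) surjects onto $\calO_{B,v}/\frakP_v\simeq\F_{q_v^2}$, the extra piece of $\frakP_v^{2m}$ is absorbed into $\frako_{E,v}$, giving $\calO_{2m+1}(E_v)=\calO_{2m+2}(E_v)$, i.e.\ $\calO_r(E_v)=\calO_{r+1}(E_v)$ for $r$ \emph{odd}. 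Your claimed collapse $\calO_{2j}(E_v)=\calO_{2j+1}(E_v)$ is false: for instance $\calO_2(E_v)=\frako_{E,v}+\frakP_v=\calO_{B,v}$, while $\calO_3(E_v)=\frako_{E,v}+\varpi_{F,v}\calO_{B,v}$ is a proper suborder. The conclusion you draw for (i) (only odd levels occur in the unramified case) survives, but the indexing matters for (iii): if your version of the collapse were correct, the coincidence $\calO_r(E_v)\simeq\calO_{r'}(E_v)$ with $r\ne r'$ would have $\min\{r,r'\}$ even, whereas the lemma asserts it is odd, so the off-by-one propagates into your derivation of (iii). The fix is exactly the two-line computation above. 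The remainder of the sketch --- comparing residue-field images $\F_{q_v}$ versus $\F_{q_v^2}$ for (iv) and for the $r\le 2$ case of (v), and using embedding/conductor invariants for the converse of (v) at odd $v$ while deferring the dyadic subtleties to HPS --- is reasonable and consistent with the intended citation.
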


See \cite[Theorem 3.10]{HPS} for the converse of (v) when
$v$ is dyadic.

Following \cite{HPS}, we say a global order $\calO \subset B$ is \emph{special of level}
$\frakN$ if $\calO_v$ is special of level $\ord_v(\frakN)$
for each $v < \infty$.  However, $\frakN$ alone does not determine
the local isomorphism type
of $\calO$.  We will always place the following additional
assumption on a global special order $\calO$ of level $\frakN$:
 for $v < \infty$ such that $B_v$ is
division and $r_v := \ord_v(\frakN)$ is odd, $\calO_v \simeq \calO_{r_v}(E_v)$ where
$E_v/F_v$ is the unramified quadratic extension.
Further, when we write $\frakN = \frakN_1 \frakN_2 \frakM$, it will always mean
that $\frakN_1, \frakN_2$ and $\frakM$ are pairwise coprime ideals such that (i)
for finite $v | \frakN_1 \frakN_2$ if and only if $B_v$ is division, and (ii) for
$v | \frakN_1 \frakN_2$, $v | \frakN_1$ if and only if $\ord_v(\frakN_1)$ is odd.
Thus $\frakN_1$ (resp.\ $\frakN_2$) is divisible exactly by the finite primes $v$
ramifying in $B$ such that $\calO_v \simeq \calO_{r_v}(E_v)$ for some $r_v$
where $E_v$ is an unramified (resp.\ ramified)
quadratic extension of $F_v$, and $\frakM$ is divisible exactly by the finite
primes $v$ splitting $B$ where $\calO_v$ is a non-maximal local Eichler order.

Sometimes to emphasize the above conventions, we will say $\calO$ is a special
order of level type $(\frakN_1, \frakN_2, \frakM)$.
Note that by \cref{lem:HPS}, if
$\ord_v(\frakN_2) = 2$ for all $v | \frakN_2$, then any two special orders of level type $(\frakN_1,
\frakN_2, \frakM)$ are locally isomorphic, i.e., in the same genus.

If $\calO$ is special of level type $(\frakN_1, \frako, \frakM)$, we say $\calO$ is of unramified quadratic type.
This means that $\calO_v$ contains the maximal order of an unramified or
split quadratic extension of $F_v$ for each $v < \infty$.
Such orders are nice to work with because then one has
that $N(\hat \calO^\times) = \hat \frako^\times$.
Note that an order $\calO$ is Eichler if and only if $\calO$ is of unramified
quadratic type with $\frakN_1$ squarefree, i.e., $\frakN_1 = {\mathfrak D}_B$
and $\frakN_2 = \frako$.

\subsection{Automorphic forms}
Fix an arbitrary $\frako_F$-order $\calO \subset B$.  We define
the space of quaternionic
modular forms (or automorphic forms) of level $\calO$  on
$B$ with trivial weight to be $M(\calO) = \{ \phi : \Cl(\calO) \to \C \}$.
Set $n = h(\calO) = \# \Cl(\calO)$, and write $\Cl(\calO) = \{ x_1, \dots, x_n \}$.
By abuse of notation, we use also use $x_i$ to denote an element of
$\hat B^\times$ representing the corresponding element of 
$\Cl(\calO) = B^\times \bs \hat B^\times / \hat \calO^\times$.  
Let $\calI_i$ denote a classical right $\calO$-ideal
representing $x_i$.
Let $\calO_\ell(\calI_i) = \{ \alpha \in B : \alpha \calI_i \subset \calI_i \} =
 x_i \hat \calO x_i^{-1} \cap B$ be the left order of $\calI_i$.  
Put $w_i = [\calO_\ell(\calI_i)^\times : \frako^\times]$, which is always
finite.  We define an inner product on $M(\calO)$ by
\begin{equation}
 (\phi, \phi') = \sum_{i=1}^n \frac 1{w_i} \phi(x_i) \overline{\phi'(x_i)}.
\end{equation}
This inner product can also be defined by integrating $\phi$ against
$\bar \phi'$ over $\hat F^\times B^\times \bs \hat B^\times$ with an
appropriate normalization of measure (e.g., see \cite{me:cong}).

Let $\omega: F^\times \bs \A_F^\times \to \C^\times$ be an 
idele class character of $F$ which is trivial at each infinite place.
Let $L^2(B^\times \bs \hat B^\times, \omega)$ be the space of
$L^2$-functions $\phi : B^\times \bs \hat B^\times \to \C$ such that $\phi(zx) = \omega(z)
\phi(x)$ for all $z \in \hat F^\times$, $x \in \hat B^\times$.  Setting
$M(\calO, \omega) = M(\calO) \cap L^2(B^\times \bs \hat B^\times, \omega)$, we see that
\begin{equation}
M(\calO) = \bigoplus M(\calO, \omega),
\end{equation}
where $\omega$ ranges over idele class characters which are
trivial on $\frako_F^\times \times F_\infty^\times = \A_F^\times \cap ( \hat \calO^\times \times B_\infty^\times)$,
i.e., $\omega$ runs over the group $\widehat{\Cl}(F)$ of ideal class characters of $F$.

By an automorphic representation $\pi$ of $B^\times$ with trivial weight
and (central) character $\omega$,
we mean an irreducible unitary subrepresentation of the right regular representation of 
$\hat B^\times$ on $L^2(B^\times \bs \hat B^\times, \omega)$.  
Call $\pi$ cuspidal if $\pi$ is not 1-dimensional.  
We say that $\pi$ occurs in $M(\calO)$ if $\pi \cap M(\calO) \ne 0$, 
which is equivalent to $\pi^{\hat \calO^\times} \ne 0$, and speak similarly
for $M(\calO,\omega)$.  
Accordingly we get decompositions
\begin{equation}
 M(\calO) = \bigoplus \pi^{\hat \calO^\times}, \quad
M(\calO, \omega) = \bigoplus \pi^{\hat \calO^\times},
\end{equation}
where $\pi$ respectively runs over automorphic representations occurring in
$M(\calO)$ and $M(\calO, \omega)$.

We define the Eisenstein subspace of $M(\calO)$ to be
$\Eis(\calO) = \bigoplus \pi^{\hat \calO^\times}$, where
$\pi$ runs over 1-dimensional automorphic representations occurring in
$M(\calO)$.  
All such $\pi$ must be of the form $\mu \circ N_{B/F}$
for some character $\mu : F^\times \bs \A_F^\times \to \C^\times$.   
We may view $N = N_{B/F}$
as a map from $B^\times \bs B^\times(\A_F) / \hat \calO^\times B_\infty^\times$ to
\[ \Cl^+(N(\hat \calO)) := F^\times \bs \A_F^\times / N(\hat \calO^\times) F_\infty^+, \]
where $F_\infty^+$ denotes the totally positive elements of $F_\infty$.
Thus we may take $\mu$ to be a character of $\Cl^+(N(\hat \calO))$.
If $\calO$ is special of unramified quadratic type,
then $\Cl^+(N(\hat \calO))$ is just the narrow class group $\Cl^+(F)$.

Put $\Eis(\calO, \omega) = \Eis(\calO) \cap M(\calO, \omega)$.
Note $\Eis(\calO, 1)$ always contains the constant function $\one$.  In general,
a basis of $\Eis(\calO, \omega)$ is given by the characters $\mu \circ N$, where
$\mu$ runs over the characters of $\Cl^+(N(\hat \calO))$ such that $\mu^2 = \omega$.
Note that for any character $\mu \circ N \in \Eis(\calO)$, its norm 
$(\mu \circ N, \mu \circ N)$ equals the mass of $\calO$,
\[  m(\calO) := \sum_{i=1}^n \frac 1{w_i}. \]

\begin{lemma} \label{mass-form}
Let $\calO$ be a special order of level type
$(\frakN_1, \frakN_2, \frakM)$.  Then
\[
m(\calO) = m(\frakN_1, \frakN_2, \frakM) := 2^{1-[F:\Q]} h_F | \zeta_F(-1) | N(\frakN)
\prod_{v | \frakN_1 \frakN_2} (1-q_v^{-1})
\prod_{v | \frakN_2 \frakM} (1+q_v^{-1}).
\]
\end{lemma}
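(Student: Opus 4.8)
The plan is to compute $m(\calO)$ as a product of a global mass factor and local correction factors, via the standard Eichler--type mass formula. The starting point is the classical mass formula for a maximal order $\calO_B$ in the definite quaternion algebra $B/F$ (due to Eichler, see e.g.\ the formulation in terms of $\zeta_F(-1)$): one has
\[
m(\calO_B) = 2^{1-[F:\Q]} h_F \, |\zeta_F(-1)| \prod_{v \in \Ram(B)} (q_v - 1),
\]
which is exactly the claimed formula specialized to the case $\frakN_1 = \mathfrak D_B$, $\frakN_2 = \frakM = \frako$ (note $N(\frakN) = N(\mathfrak D_B) = \prod_{v \in \Ram(B)} q_v$, so $N(\frakN)\prod_{v|\frakN_1}(1-q_v^{-1}) = \prod_{v\in\Ram(B)}(q_v-1)$, and the $\frakN_2\frakM$ product is empty). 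I would cite this as known (e.g.\ from \cite{HPS} or a standard reference) rather than reprove it.

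Next I would pass from the maximal order to the special order $\calO$ of level type $(\frakN_1, \frakN_2, \frakM)$ by a purely local computation of the index. Because mass is multiplicative over the genus in the sense that
\[
m(\calO) = m(\calO_B) \prod_{v < \infty} \frac{[\calO_{B,v}^\times : \calO_v^\times]}{[\calO_{B,v} : \calO_v]} \cdot (\text{normalization}),
\]
or more cleanly, since $m(\calO)/m(\calO_B) = \prod_v [\calO_{B,v}^\times \calO_v : \calO_v^\times]^{-1}$-type local factors coming from the change of volume, the whole problem reduces to evaluating, for each finite $v \mid \frakN$, the local factor relating the mass contribution of $\calO_v$ to that of a maximal order at $v$. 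I would organize this by the three types of primes:
\begin{enumerate}[(a)]
\item $v \mid \frakM$: here $B_v$ is split and $\calO_v$ is a local Eichler order of level $r_v = \ord_v(\frakM)$ inside $M_2(F_v)$; the standard local index computation gives the factor $q_v^{r_v - 1}(q_v + 1)$ relative to a maximal order (equivalently $q_v^{r_v}(1 + q_v^{-1})$), accounting for the $\prod_{v|\frakM}(1+q_v^{-1})$ and the corresponding part of $N(\frakN)$.
\item $v \mid \frakN_1$: here $B_v$ is division, $\calO_v = \calO_{r_v}(E_v)$ with $E_v/F_v$ unramified, $r_v$ odd; one computes $[\calO_{B,v}^\times : \calO_v^\times]$ using $\calO_v = \frako_{E,v} + \frakP_v^{r_v-1}$ and the filtration of $\calO_{B,v}^\times$ by $1 + \frakP_v^k$, yielding a factor producing $q_v^{r_v-1}(q_v^2-1)/(q_v-1)$-type contributions; combined with the $(q_v-1)$ already present from $m(\calO_B)$ this gives the $N(\frakN)(1-q_v^{-1})$ part. (I would double-check the exponent bookkeeping here since $\calO_v$ has reduced discriminant $\frakp_v^{r_v}$.)
\item $v \mid \frakN_2$: same as (b) but $E_v/F_v$ ramified and $r_v$ even (indeed $\ord_v(\frakN_2)=2$ in the global hypothesis), giving the extra factor $(1+q_v^{-1})$ as well as $(1-q_v^{-1})$ — i.e.\ this is where both products in the statement pick up a contribution from the same prime.
\end{enumerate}

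The main obstacle I expect is the local index calculation at the ramified primes $v \mid \frakN_1\frakN_2$, i.e.\ computing $[\calO_{B,v}^\times : \calO_r(E_v)^\times]$ precisely for both the unramified-$E_v$/odd-$r$ case and the ramified-$E_v$/even-$r$ case; one must track the orders of the successive quotients $(\frako_{E,v} + \frakP_v^{k})^\times / (\frako_{E,v} + \frakP_v^{k+1})^\times$, which depend on the ramification of $E_v$ and the parity of $k$, and getting the exponent of $q_v$ to match $N(\frakN)$ exactly requires care. The dyadic primes are potentially delicate, but since we only need the value (not a genus count) and the formula is a product of $\zeta_F(-1)$, $h_F$, $N(\frakN)$ and Euler-type factors, the $2$-adic places should cause no exceptional behavior in the mass itself — any subtlety there is absorbed into $\zeta_F(-1)$ via the functional equation / Siegel's formula. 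Alternatively, and perhaps more cleanly, I would simply invoke the mass formula for special orders proved in \cite{HPS} directly (their Theorem on masses), in which case this Lemma is essentially a restatement with our normalization $m(\calO) = \sum 1/w_i$, and the only thing to verify is that $\frac12 \cdot (\text{their convention}) = \sum \frac1{w_i}$ matches and that our $N(\frakN)\prod(1\pm q_v^{-1})$ repackages their product over ramified and Eichler primes. Given the paper's style, I expect the authors take this second route: cite \cite{HPS} and record the formula in the normalization needed for the sequel.
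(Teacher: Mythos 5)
Your overall strategy --- start from the mass formula for a maximal order and scale by the local unit-group indices $[\calO_{B,v}^\times : \calO_v^\times]$ at each $v \mid \frakN$ --- is sound and is the same kind of argument the paper makes, just anchored at a different base point: the paper does not return to the maximal order, but cites the formula (1.6) of \cite{me:cong} for the mass of a special order $\calO'$ of unramified quadratic type $(\frakN_1\prod_{v \mid \frakN_2}\frakp_v,\,\frako,\,\frakM)$ containing $\calO$ (this already accounts for your cases (a) and (b)), and then only needs the single local index at $v \mid \frakN_2$, quoted from \cite[Proposition 2.6]{HPS} as $(q_v+1)q_v^{e_v-2}$. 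Note also that your fallback of ``just cite the HPS mass theorem'' only covers $F=\Q$ (\cite[Theorem 6.8]{HPS}), which is presumably why the paper combines \cite{me:cong} with the local HPS index for general totally real $F$.

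Two points in your sketch need repair. First, the scaling relation is simply $m(\calO) = m(\calO_B)\prod_v [\calO_{B,v}^\times : \calO_v^\times]$ for $\calO \subset \calO_B$ (mass is the volume of $\hat F^\times B^\times \bs \hat B^\times$ divided by the volume of the compact open subgroup coming from $\hat\calO^\times$, so it scales by the unit index); your displayed ratio involving $[\calO_{B,v}:\calO_v]$, and the inverted version after it, are not correct as written. Second, your guessed local factor in case (b) is wrong: for $\calO_v = \calO_r(E_v)$ with $E_v/F_v$ unramified and $r$ odd one has $\calO_v^\times = \frako_{E,v}^\times(1+\frakP_v^{r-1})$, and a direct filtration count gives $[\calO_{B,v}^\times : \calO_v^\times] = q_v^{r-1}$ exactly, with no factor $q_v+1$; a factor of the shape $(q_v^2-1)/(q_v-1)$ would introduce a spurious $(1+q_v^{-1})$ at $v \mid \frakN_1$, contradicting the target contribution $N(\frakp_v^{r})(1-q_v^{-1})$ that you correctly state. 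The extra $(1+q_v^{-1})$ arises only at $v \mid \frakN_2$ (ramified $E_v$), where the index is $(q_v+1)q_v^{e_v-2}$, exactly the quantity the paper imports from \cite{HPS}. With these corrections your route does yield the lemma, and it is essentially a longer-hand version of the paper's proof.
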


\begin{proof}
Let $\calO'$ be a special order of level type $(\frakN_1', \frako, \frakM)$
containing $\calO$ where $\frakN_1' = \frakN_1 \prod_{v | \frakN_2} \frakp_v$.
A formula for $m(\calO')$ is given in \cite[(1.6)]{me:cong}.  Since one can
interpret masses of orders as volumes in $\hat B^\times$ with respect
to suitable Haar measures, one has that 
$m(\calO) = m(\calO') \prod_{v | \frakN_2} [(\calO_v')^\times : \calO_v^\times]$.
From \cite[Proposition 2.6]{HPS}, one knows 
$[(\calO_v')^\times : \calO_v^\times] = (q_v+1)q_v^{e_v-2}$ where $e_v = \ord_v(\frakN_2)$.  These two calculations combine to give the lemma.
(The $F=\Q$ case is already stated in \cite[Theorem 6.8]{HPS}.)
\end{proof}

\begin{lemma} \label{EisO1} Let $\calO$ be a special order of level type
$(\frakN_1, \frakN_2, \frakM)$ such that $\ord_v(\frakN_2) = 2$
for each $v | \frakN_2$.  
Then there is a basis of 
$\Eis(\calO,1)$ consisting of $\one$ and the characters $\eta_{E/F} \circ N$, 
where $E/F$ runs over quadratic extensions such that
each finite $v$ ramified in $E/F$ is odd and divides $\frakN_2$.
\end{lemma}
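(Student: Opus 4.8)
The plan is to identify $\Eis(\calO,1)$ with the characters $\mu \circ N$ where $\mu$ runs over quadratic characters of $\Cl^+(N(\hat\calO))$, and then determine exactly which quadratic characters of $F$ factor through $\Cl^+(N(\hat\calO))$. By the discussion preceding the statement, a basis of $\Eis(\calO,\omega)$ is given by $\mu \circ N$ with $\mu$ ranging over characters of $\Cl^+(N(\hat\calO))$ satisfying $\mu^2 = \omega$; taking $\omega = 1$, this means $\mu$ is a quadratic (or trivial) idele class character of $F$ that is trivial on $N(\hat\calO^\times) F_\infty^+$. The trivial $\mu$ gives the constant function $\one$. A nontrivial quadratic character of $F^\times \bs \A_F^\times$ is of the form $\eta_{E/F}$ for a unique quadratic extension $E/F$, so the content of the lemma is the claim that $\eta_{E/F}$ is trivial on $N(\hat\calO^\times) F_\infty^+$ if and only if every finite place ramified in $E/F$ is odd and divides $\frakN_2$.

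The key step is therefore a purely local computation of $N(\calO_v^\times) \subset \frako_v^\times$ for each finite $v$, under the hypothesis $\ord_v(\frakN_2) = 2$ for $v \mid \frakN_2$. First I would handle the places where $\calO$ is of unramified quadratic type: for $v \nmid \frakN_2$ (so $v$ splits $B$ with $\calO_v$ Eichler, or $v$ ramifies in $B$ with $\calO_v = \calO_r(E_v)$, $E_v/F_v$ unramified, $r$ odd), one has $N(\calO_v^\times) = \frako_v^\times$ — this is exactly the remark in \cref{sec:spec} that orders of unramified quadratic type satisfy $N(\hat\calO^\times) = \hat\frako^\times$, and it follows locally from $\frako_{E,v}^\times \subset \calO_v^\times$ together with $N(\frako_{E,v}^\times) = \frako_v^\times$ when $E_v/F_v$ is unramified or split. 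So a quadratic $\eta_{E/F}$ trivial on $N(\hat\calO^\times)$ must be unramified at all such $v$. Next, for $v \mid \frakN_2$, so $\calO_v = \calO_2(E_v) = \frako_{E,v} + \frakP_v$ with $E_v/F_v$ ramified: here I would show $N(\calO_v^\times)$ is the index-$2$ subgroup of $\frako_v^\times$ cut out precisely by $\eta_{E_v/F_v}$ — indeed $N(\frako_{E,v}^\times) = N_{E_v/F_v}(\frako_{E,v}^\times)$ is this index-$2$ subgroup, while elements of $\calO_2(E_v)^\times$ coming from $\frakP_v$ contribute norms in $\varpi_v \cdot (\text{stuff})$ that, after checking, do not enlarge the unit-norm group beyond $N_{E_v/F_v}(\frako_{E,v}^\times)$; one also notes $\calO_2(E_v) \subset \calO_2(E_v')$ for the other ramified quadratic $E_v'$ by \cref{lem:HPS}(v), consistent with this. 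Finally, if $v \mid \frakN_2$ is even, then $E_v/F_v$ ramified forces $\eta_{E_v/F_v}$ to be ramified in a way incompatible with the claim for dyadic $v$ — but since the statement only asserts a basis exists, I would simply observe that the characters $\eta_{E/F}$ with all ramified primes odd and dividing $\frakN_2$ are linearly independent and each lies in $\Eis(\calO,1)$, and that they span: any class character trivial on $N(\hat\calO^\times)$ must be unramified outside $\frakN_2$ and quadratic, hence unramified at even places too (a quadratic character of $F_v^\times$ ramified at even $v$ cannot be trivial on a subgroup as small as $N(\calO_v^\times)$ forces — here one uses that at even $v \mid \frakN_2$ the group $N(\calO_v^\times)$ is not contained in the kernel of any ramified quadratic character, which one can read off from the explicit description of $\calO_2(E_v)^\times$).

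Assembling: $\eta_{E/F} \circ N \in \Eis(\calO,1)$ iff $\eta_{E/F}$ is trivial on $N(\calO_v^\times)$ for every finite $v$ and on $F_\infty^+$ (automatic). The local analysis shows this holds iff $E/F$ is unramified at every $v \nmid \frakN_2$ and, at every $v \mid \frakN_2$, either $E/F$ is unramified there or $E_v = $ the ramified quadratic defining $\calO_v$ — but a global quadratic $E/F$ unramified outside $\frakN_2$ and ramified at some $v \mid \frakN_2$ will by the product formula for $\eta_{E/F}$ restrict at each ramified $v$ to a ramified quadratic character, and triviality on $N(\calO_v^\times)$ pins that character down to $\eta_{E_v/F_v}$ when $v$ is odd; the even case is excluded because $N(\calO_v^\times)$ is too large there. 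This yields exactly the stated basis. I expect the main obstacle to be the dyadic places: verifying that at even $v \mid \frakN_2$ no ramified quadratic character of $F_v^\times$ is trivial on $N(\calO_2(E_v)^\times)$ requires an explicit handle on $\calO_2(E_v)^\times$ and on the norm groups of (possibly wildly) ramified quadratic extensions, and this is where I would either invoke the dyadic case of \cite[Theorem 3.10]{HPS} (cited just after \cref{lem:HPS}) or argue directly that $N(\calO_2(E_v)^\times) \supsetneq N_{E_v/F_v}(\frako_{E,v}^\times)$ is forced, excluding all ramified quadratic characters at once.
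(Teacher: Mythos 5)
Your overall strategy is the same as the paper's: reduce to deciding which quadratic idele class characters $\mu$ are trivial on $N(\hat\calO^\times)F_\infty^+$ by computing $N(\calO_v^\times)$ place by place, with $N(\calO_v^\times)=\frako_v^\times$ at every finite $v\nmid\frakN_2$. However, the two points you leave vague are exactly where the content of the lemma sits, and as written they are gaps. At $v\mid\frakN_2$ one has $\calO_v^\times=\frako_{L,v}^\times(1+\frakP_v)$, where $L_v$ is the ramified quadratic used to define $\calO_v=\calO_2(L_v)$, and $N(1+\frakP_v)=1+\frakp_v$, so $N(\calO_v^\times)=N(\frako_{L,v}^\times)(1+\frakp_v)$. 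For dyadic $v$ this is strictly larger than $N(\frako_{L,v}^\times)$, but the decisive point is not the strict containment you propose to force (that alone does not exclude the \emph{other} ramified quadratic characters); it is that $N(\calO_v^\times)\supset 1+\frakp_v$, while for even $v$ no ramified quadratic character of $F_v^\times$ is trivial on $1+\frakp_v$: any quadratic character trivial there factors through $\frako_v^\times/(1+\frakp_v)$, which is cyclic of odd order $q_v-1$, hence is unramified. (Equivalently, as the paper says, $N(\frako_{E,v}^\times)\supset 1+\frakp_v$ if and only if $v$ is odd.) Invoking the dyadic case of \cite[Theorem 3.10]{HPS}, which concerns when $\calO_r(E_v)\simeq\calO_r(E_v')$, would not supply this norm computation.

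At odd $v\mid\frakN_2$ your assertion that triviality on $N(\calO_v^\times)$ ``pins the character down to $\eta_{E_v/F_v}$'' (the ramified quadratic defining $\calO_v$) is incorrect, and if it were correct your criterion would yield a strictly smaller set than the stated basis, which allows any global $E/F$ whose finite ramification is at odd $v\mid\frakN_2$, with \emph{either} local ramified quadratic at such $v$. For odd $v$ the group $\frako_v^\times$ has a unique index-two subgroup, so the two ramified quadratic characters of $F_v^\times$ agree on $\frako_v^\times$ and both are trivial on $N(\calO_v^\times)=N(\frako_{L,v}^\times)$; equivalently, by \cref{lem:HPS}(v) the order $\calO_2(L_v)$ does not remember which ramified quadratic was used, which is how the paper phrases it (``we may take $L_v\simeq E_v$''). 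With these two corrections—the $1+\frakp_v$ argument at dyadic places and the non-uniqueness at odd places—your argument becomes the paper's proof.
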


\begin{proof} 
Consider a character $\mu \circ N$ occurring in $M(\calO,1)$
and let $F_v^{(2)}$ denote the subgroup of squares of $F_v^\times$.
Then $\mu^2 = 1$ and $\mu_v$ factors through
$F_v^\times/N(\calO_v^\times) F_v^{(2)}$ for $v < \infty$.  
For $v | \infty$, the only requirement on $\mu_v$ is that it
 factors through $\R^\times / \R_{>0} \simeq \{ \pm 1 \}$.  
For a finite $v \nmid \frakN_2$, we have $N(\calO_v^\times)
= \frako_v^\times$, so 
$F_v^\times/N(\calO_v^\times) F_v^{(2)} \simeq \langle \varpi_v 
\rangle / \langle \varpi_v^2 \rangle$ and we see $\mu_v$ can be
trivial or the unramified quadratic character of $F_v^\times$.

Now suppose $v | \frakN_2$.  To elucidate the assumption on $\frakN_2$,
first just suppose $\ord_v(\frakN_2)=2r$ with $r \ge 1$, and write
$\calO_v = \calO_{2r}(L_v)$ where $L_v/F_v$ is a ramified quadratic
extension.    Then $\calO_v^\times = \frako_{L,v}^\times (1 + \frakP_v^{2r-1})$.
One has $N (1 + \frakP_v^{2r-1}) = 1 + \frakp_v^r$, and thus
$F_v^\times/N(\calO_v^\times) F_v^{(2)} \simeq (\frako_v^\times
/ N(\frako_{L,v}^\times) (1+\frakp_v^r)) \times \langle \varpi_v 
\rangle / \langle \varpi_v^2 \rangle$.  Thus $\mu_v$ can be unramified
or any ramified quadratic character $\eta_{E_v/F_v}$ such that
$N(\frako_{E,v}^\times) \supset N(\frako_{L,v}^\times) (1+\frakp_v^r)$.
Now assume $r=1$.  It is well known that 
$N(\frako_{E,v}^\times) \supset 1+\frakp_v$ if and only if $v$ is odd, and
thus if $v$ is even $\mu_v$ must be unramified.
If $v$ is odd, then $\calO_2(L_v) \simeq \calO_2(E_v)$, so we may take 
$L_v \simeq E_v$ to see that $\mu_v$ can be any quadratic
character.
\end{proof}

\begin{rem}
Let $\calO$ be as in \cref{EisO1}.  Since the 1-dimensional automorphic representations
occurring in $M(\calO)$ form a group, the above lemma yields a complete
description of $\Eis(\calO)$.  
Namely, if $\omega = \mu^2$ for some $\mu \in \widehat{\Cl}^+(N(\hat \calO))$, then 
$\Eis(\calO, \omega) =
\{ (\mu \circ N) \phi : \phi \in \Eis(\calO, \one) \}$.
Otherwise $\Eis(\calO, \omega) = 0$.  
\end{rem}

We define the cuspidal subspaces $S(\calO)$ and $S(\calO, \omega)$
of $M(\calO)$ and $M(\calO, \omega)$ to be the orthogonal complements of
the Eisenstein subspaces.  Hence we have decompositions
\[ S(\calO) = \bigoplus \pi^{\hat \calO^\times}, \quad
S(\calO, \omega) = \bigoplus \pi^{\hat \calO^\times}, \]
where $\pi$ respectively runs over cuspidal representations occurring in
$M(\calO)$ and $M(\calO,\omega)$.

\subsection{Brandt matrices and representation numbers} \label{sec:brandt}
We may realize $M(\calO)$ as $\C^n$ via $\phi \mapsto (\phi(x_1), \dots, \phi(x_n))$.
Let $[\phi]$ denote the column vector ${}^t (\phi(x_1) \dots \phi(x_n))$. 
Then we can define a Hecke action
in terms of Brandt matrices as in \cite{DV} and \cite{me:basis}.

For a nonzero integral ideal $\frakm$, we define the $n \times n$
Brandt matrix $A_\frakm = (a_{ij}(\frakm))$ via
\begin{equation}
a_{ij}(\frakm) = \# \left( \{ \gamma \in \calI_i \calI_j^{-1} : N(\gamma) \frako = \frakm
N(\calI_i \calI_j^{-1}) \} / \calO_\ell(\calI_j)^\times \right).
\end{equation}
Define the Hecke operator $T_\frakm : M(\calO) \to M(\calO)$
via matrix multiplication: $[T_\frakm \phi] = A_\frakm [\phi]$.
We can also express
\[ (T_\frakm \phi)(x) = \sum \phi(x \beta), \]
where $\beta$ runs over the integral right $\calO$-ideals of norm $\frakm$.
Note $T_\frako$ is the identity operator.
The collection of $T_\frakm$'s is a commuting family of real self-adjoint operators
on $M(\calO)$.

Under a class number 1 assumption, the Brandt matrix entries can be
expressed as classical representation numbers of quadratic forms as follows.  
For a (full) $\frako_F$-lattice $\Lambda \subset B$, let $\calO_r(\Lambda)
= \{ \alpha \in B : \Lambda \alpha \subset \Lambda \}$ and
$\calO_r(\Lambda)^1$ be the subset of norm 1 elements in $\calO_r(\Lambda)$.  
Let $\frako_+$ be the subset of totally
positive elements of $\frako$, and $\frako_+^\times$ be the totally positive
units.
For $y \in \frako_+$, define the representation number
\begin{equation}
 r_\Lambda(y) = \# \{ \lambda \in \Lambda : N(\lambda) = y \}.
\end{equation}

Now suppose $h_F^+ = 1$.    Since $h_F = h_F^+$, every element of $\frako_+^\times$ 
is a square, and thus $N_{B/F}(\frako^\times) = \frako_+^\times$.  Fix $y \in \frako_+$.   Then $N(\lambda)\frako = y \frako$ is equivalent
to $N(\lambda) \in y \frako_+^\times$.  Since $\calO_r(\Lambda)^\times =
\frako^\times \calO_r(\Lambda)^1$, we see there is a bijection of sets, 
\[  \{ \lambda \in \Lambda : N(\lambda)\frak \in y \frako_+^\times \} / \calO_r(\Lambda)^\times \simeq \{ \lambda \in \Lambda : N(\lambda)\frak = y \} / \calO_r(\Lambda)^1. \]
Put $\Lambda_{ij} = \calI_i \calI_j^{-1}$ and let $\alpha_{ij}$ be a totally
positive generator of $N(\calI_i \calI_j^{-1})$.  Note 
$\calO_r(\Lambda_{ij}) = \calO_\ell(\calI_j)$.  Also 
$\frako^\times \cap \calO_r(\Lambda_{ij})^1 = \{ \pm 1 \}$, which implies
$\# \calO_r(\Lambda_{ij})^1  = 2w_j$.  Hence, assuming $h_F^+=1$ and
$y \in \frako_+$, we can rewrite our Brandt matrix entries as
\begin{equation}
 a_{ij}(y\frako) = \frac 1{2w_j} r_{\Lambda_{ij}}(y \alpha_{ij}).
\end{equation}
Note that when $i=j$, we get $\Lambda_{ii} = \calO_\ell(\calI_i)$, so
the diagonal Brandt matrix entries are simply
\begin{equation} \label{brandt-diagonal}
a_{ii}(y\frako) = \frac 1{2w_i} r_{\calO_\ell(\calI_i)}(y).
\end{equation}

\subsection{Jacquet--Langlands correspondence}
The Jacquet--Langlands correspondence is a dictionary between
automorphic representations of $B^\times$ and $\GL(2)$.  Here we
present a refinement of it at the level of modular forms
from \cite{me:basis}.

Let $\calO \subset B$ be a special order of level type $(\frakN_1, \frakN_2,
\frakM)$ and write $\frakN = \frakN_1 \frakN_2 \frakM$. 
Let $S_{\two}(\frakN, \omega)$ denote the space of adelic holomorphic
Hilbert cusp forms of level $\frakN$, parallel weight 2, and central
character $\omega$.  (By level $\frakN$, we mean ``level $W(\frakN)$,'' which
is Shimura's analogue of $\Gamma_0(N)$---see, e.g., \cite[Section 5.1]{me:basis}.)
For relatively prime integral ideals $\fraka, \frakb, \frakc$ of $\frako$
such that $\fraka | \frakN_1$ and $\frakb \frakc | \frakN_2$,
let $S_{\two}^{[\fraka, \frakb, \frakc]}(\fraka \frakb \frakc \frakM, \omega)$ be the 
subspace of $S_\two(\frakN, \omega)$ generated by eigenforms which are
$\frakp$-new for each $\frakp | \fraka \frakb \frakc$ and whose corresponding
local representations of $\GL_2(F_v)$ are:
(i) local discrete series for $v | \fraka \frakb \frakc$, 
(ii) supercuspidal for $v | \frakb$,
and (iii) special (i.e., twisted Steinberg) for $v | \frakc$.

For an integral ideal $\mathfrak A \subset \frako$, let $\mathcal H^{\mathfrak A}$
be the Hecke algebra generated by the $T_\frakp$ for $\frakp \nmid {\mathfrak A}$.  
(For our purposes, it does not matter if we work with 
Hecke algebras over $\Z$ or $\Q$ or $\C$.)

\begin{thm} [\cite{me:basis}] \label{JL}
Assume $\ord_\frakp(\frakN_2) = 2$ for each $\frakp | \frakN_2$.  Let $\frakN' = \frakN {\mathfrak D}_B^{-1}$.
We have the following isomorphism of $\mathcal H^{\frakN'}$-modules:
\[ S(\calO, \omega) \simeq \bigoplus 
2^{\# \{ \frakp | \frakb \} } S_{\two}^{[\fraka, \frakb, \frakc]}
(\fraka \frakb \frakc \frakM, \omega), \]
where (i) $\fraka$ runs over divisors of $\frakN_1$ such that
$v_\frakp(\fraka)$ is odd for all $\frakp | \frakN_1$, and (ii)
$\frakb, \frakc$ run over relatively prime divisors of $\frakN_2$
such that $\frakp | \frakb \frakc$ for each $\frakp | \frakN_2$.
\end{thm}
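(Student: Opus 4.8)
The plan, following the approach of \cite{me:basis}, is to reduce the asserted global isomorphism to a list of purely local statements about dimensions of invariant vectors and then reassemble. First I would decompose both sides into Hecke-isotypic pieces: on the quaternionic side $S(\calO,\omega)=\bigoplus_{\pi^B}(\pi^B)^{\hat\calO^\times}$, where $\pi^B$ runs over cuspidal automorphic representations of $B^\times$ with central character $\omega$, and each space $S_{\two}^{[\fraka,\frakb,\frakc]}(\fraka\frakb\frakc\frakM,\omega)$ decomposes over automorphic representations $\pi$ of $\GL_2/F$ having the prescribed local type at the primes dividing $\fraka\frakb\frakc$. The global Jacquet--Langlands correspondence together with multiplicity one furnishes a bijection $\pi^B\leftrightarrow\pi$, normalized so that $\pi$ is discrete series at every $v\mid{\mathfrak D}_B=\frakN_1\frakN_2$ and all Hecke eigenvalues $a_\frakp$ agree for $\frakp\nmid{\mathfrak D}_B$. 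Fixing such a matched pair, the theorem becomes the assertion that, for each finite place $v$, $\dim(\pi^B_v)^{\calO_v^\times}$ equals the multiplicity with which the local component of $\pi$ appears on the right-hand side, together with Hecke equivariance.

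Second, I would carry out the local comparison place by place. For $v\nmid\frakN$, $\calO_v$ is maximal, $(\pi^B_v)^{\calO_v^\times}$ is one-dimensional exactly when $\pi_v$ is unramified, and both sides agree. For $v\mid\frakM$, $B_v$ is split, $\calO_v$ is a local Eichler order of level $\ord_v(\frakM)$, local JL is the identity, and $\dim(\pi_v)^{\calO_v^\times}$ is given by the classical local newform/oldform theory of Casselman and Atkin--Lehner--Li; since the spaces $S_{\two}^{[\fraka,\frakb,\frakc]}(\fraka\frakb\frakc\frakM,\omega)$ retain the full level $\frakM$ (oldforms at $\frakM$ are allowed), the two local invariant spaces coincide and the same multiplicity appears on both sides at $\frakM$. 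For $v\mid\frakN_1\frakN_2$, $B_v$ is division, $\pi^B_v$ is finite-dimensional, and $\calO_v=\calO_{r_v}(E_v)$ is a local special order; here I would compute $\dim(\pi^B_v)^{\calO_v^\times}$ in terms of the conductor of $\pi_v$ and, in the ramified case, of the quadratic extension $E_v$ itself, using \cref{lem:HPS}. The outcome, extracted in \cite{me:basis}, is that running over the admissible $\fraka$ (for $v\mid\frakN_1$, where the oddness of $v_\frakp(\fraka)$ records the possible odd conductor exponents $\le\ord_v(\frakN_1)$) and over the admissible coprime pairs $\frakb,\frakc$ (for $v\mid\frakN_2$, separating the supercuspidal and twisted-Steinberg cases) reproduces exactly these local dimensions.

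Third, I would pin down the multiplicity factor $2^{\#\{\frakp\mid\frakb\}}$. At $v\mid\frakN_2$ we have $\ord_v(\frakN_2)=2$, so $\calO_v\simeq\calO_2(L_v)$ for a ramified quadratic $L_v/F_v$, and by \cref{lem:HPS}(v), applied with $v$ odd, $\calO_2(L_v)\simeq\calO_2(L_v')$ for the two non-isomorphic ramified quadratic extensions of $F_v$. Thus a single such special order contains both ramified quadratic orders, and when $\pi^B_v$ is the transfer of a supercuspidal (the $\frakb$-case) it acquires invariant vectors from each of the two embeddings, so $\dim(\pi^B_v)^{\calO_v^\times}=2$, whereas in the twisted-Steinberg ($\frakc$) case one obtains a one-dimensional space; tensoring over $\frakp\mid\frakb$ produces the global factor $2^{\#\{\frakp\mid\frakb\}}$. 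Finally I would assemble the local identifications into a global $\mathcal H^{\frakN'}$-module isomorphism: equivariance under $T_\frakp$ for $\frakp\nmid\frakN$ is immediate from global JL, while for the remaining generators of $\mathcal H^{\frakN'}$ --- the primes $\frakp$ dividing $\frakN_1$ to the first power, at which $B$ ramifies, $\calO_\frakp$ is maximal, and $\pi_\frakp$ is an unramified twist of Steinberg --- the quaternionic $T_\frakp$ acts (after normalization) as the Atkin--Lehner involution, whose eigenvalue matches $a_\frakp=\pm1$ on the $\GL_2$ side.

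The main obstacle is the local computation at the primes $v\mid\frakN_2$: one must correctly identify the factor-$2$ multiplicity attached to supercuspidal primes and distinguish supercuspidal from twisted-Steinberg local behaviour, which requires a careful analysis of how local Jacquet--Langlands transfers of discrete series restrict to $\calO_2(L_v)^\times$ through the two ramified quadratic embeddings. The hypothesis $\ord_\frakp(\frakN_2)=2$ for every $\frakp\mid\frakN_2$ is precisely what keeps this tractable, since for higher even powers the description of the local quaternionic oldforms at a ramified-quadratic prime becomes substantially more intricate.
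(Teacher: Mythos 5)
Your outline is essentially a reconstruction of how the underlying correspondence is proved in \cite{me:basis}, whereas the paper's own proof of \cref{JL} is much shorter: it quotes \cite{me:basis}*{Corollary 5.5} verbatim for the isomorphism as $\mathcal H^{\frakN}$-modules, and the only new content is the upgrade from $\mathcal H^{\frakN}$ to $\mathcal H^{\frakN'}$, argued exactly as in your final paragraph --- every eigenform occurring on either side is $\frakp$-new at each $\frakp$ sharply dividing $\frakN_1$, so the action of those extra $T_\frakp$'s is forced and automatically matched. So your route is genuinely different in scope: you re-derive (in outline) the refined Jacquet--Langlands statement by decomposing both sides over automorphic representations, matching via global JL and multiplicity one, and comparing local invariant dimensions place by place; this is the right architecture and is what \cite{me:basis} does, but it buys you nothing here beyond the citation, since at the decisive places $v \mid \frakN_1\frakN_2$ you still defer the dimension counts to \cite{me:basis}. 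One substantive caution: your proposed mechanism for the factor $2^{\#\{\frakp \mid \frakb\}}$ --- that $\calO_2(L_v)\simeq\calO_2(L_v')$ for both ramified quadratic extensions, so one ``acquires invariant vectors from each of the two embeddings'' --- is a heuristic, not an argument; it neither produces a well-defined two-dimensional space nor rules out coincidence of the putative vectors. The actual source of the $2$ is a local computation for the Jacquet--Langlands transfer of a depth-zero (unramified dihedral, even-conductor minimal) supercuspidal: its restriction to a ramified torus contains \emph{both} unramified characters of $K_v^\times/F_v^\times$ (this is the content of \cite{me:basis}*{Theorem 3.6}, invoked in the proof of \cref{pichi-adm}), equivalently $\dim(\pi^B_v)^{\calO_v^\times}=2$ there, versus $1$ in the twisted-Steinberg case. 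Since you explicitly flag this local analysis as the main obstacle and lean on \cite{me:basis} for it, the sketch has no fatal gap, but as written that step would not stand on its own; if you intend the proof to be self-contained rather than a citation, that computation (and the odd-power oldform counts at $\frakp \mid \frakN_1$ with $\ord_\frakp(\frakN_1)>1$) is precisely what must be supplied.
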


\begin{proof} The result \cite[Corollary 5.5]{me:basis} shows the
stated isomorphism holds for $\mathcal H^{\frakN}$-modules. 
Note $\mathcal H^{\frakN'}$ is the algebra generated by
 $\mathcal H^{\frakN}$ and the $T_\frakp$ for $\frakp | \frakN_1$
 such that $\ord_\frakp(\frakN_1) = 1$.  Since any eigenform 
 occurring in either side of the isomorphism 
 is $\frakp$-new for $\frakp$ sharply
 dividing  $\frakN_1$, the above isomorphism also preserves
 the action of each such $T_\frakp$ (e.g., see \cite{me:cong2} or
 \cite[Remark 5.2]{me:basis}).
\end{proof}

We remark that \cite{me:basis} also establishes an analogous Hecke isomorphism
for \emph{newspaces} without assuming $\ord_\frakp(\frakN_2) = 2$ when 
$\frakp | \frakN_2$.  The difficulty in removing this assumption in the above theorem 
boils down to an issue in the local determination of oldspaces.

\subsection{Average values of quaternionic modular forms} 
The following simple linear-algebraic relation forms the basis of
our calculation of period averages in the next section.

\begin{prop} \label{col-orthog}
Let $\calO$ be an arbitrary order in $B$, and $T$ a real self-adjoint
operator on $M(\calO)$.  Let $(a_{ij})$ be the matrix of $T$ with respect to the
basis $\{e_i \}$, where $e_i \in M(\calO)$ is the indicator function of $x_i \in \Cl(\calO)$.
Let $\Phi$ be an orthogonal basis for $M(\calO)$.
Then, for $1 \le i, j \le n$, we have
\[ \sum_{\phi \in \Phi} \frac{(T\phi)(x_i) \overline{\phi(x_j)}}{(\phi,\phi)}
=  w_j a_{ij}. \]
\end{prop}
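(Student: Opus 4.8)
The plan is to use the fact that $\{\phi/\sqrt{(\phi,\phi)} : \phi \in \Phi\}$ is an orthonormal basis of $M(\calO)$, so we have a completeness (resolution of identity) relation. Concretely, let $e_i \in M(\calO)$ be the delta function $e_i(x_k) = \delta_{ik}$. Then for any $\psi \in M(\calO)$ we can write $\psi = \sum_{\phi \in \Phi} \frac{(\psi,\phi)}{(\phi,\phi)} \phi$, and evaluating at $x_i$ gives $\psi(x_i) = \sum_{\phi} \frac{(\psi,\phi)}{(\phi,\phi)} \phi(x_i)$. This is the main tool.

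Next I would compute the inner products appearing in the statement. Using the definition $(\phi,\phi') = \sum_k \frac{1}{w_k} \phi(x_k)\overline{\phi'(x_k)}$, one sees that for any $\psi \in M(\calO)$, $(\psi, e_j) = \frac{1}{w_j} \psi(x_j)$, i.e. $w_j (\psi, e_j) = \psi(x_j)$. Now apply the completeness relation with $\psi = T e_i$ (or, since $T$ is self-adjoint, it is cleaner to proceed as follows). We compute
\[
\sum_{\phi \in \Phi} \frac{(T\phi)(x_i)\overline{\phi(x_j)}}{(\phi,\phi)}.
\]
Write $(T\phi)(x_i) = w_i (T\phi, e_i) = w_i (\phi, T e_i)$ using self-adjointness of $T$, and $\overline{\phi(x_j)} = \overline{w_j(\phi, e_j)} = w_j (e_j, \phi)$ (using that $w_j$ is a positive real). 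Then the sum becomes
\[
w_i w_j \sum_{\phi \in \Phi} \frac{(\phi, Te_i)(e_j,\phi)}{(\phi,\phi)}.
\]
By the completeness relation applied to $\psi = Te_i$, we have $\sum_{\phi} \frac{(e_j,\phi)(\phi, Te_i)}{(\phi,\phi)} = (e_j, Te_i)$. Hence the whole sum equals $w_i w_j (e_j, Te_i)$. Finally, $(e_j, Te_i) = \sum_k \frac{1}{w_k} e_j(x_k) \overline{(Te_i)(x_k)} = \frac{1}{w_j} \overline{(Te_i)(x_j)}$, and $(Te_i)(x_j)$ is the $(j,i)$ entry of the matrix of $T$ acting on column vectors, i.e. $a_{ji}$; since $T$ is self-adjoint with respect to $(\cdot,\cdot)$ the matrix $(a_{ij})$ satisfies $w_i \overline{a_{ji}} = w_j a_{ij}$, which rearranges to give the claimed $w_j a_{ij}$. (Alternatively, one can just take $T$ to have real entries, as is the case for Brandt matrices, and track the bookkeeping directly.)

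The step requiring the most care is the bookkeeping around the weights $w_j$ and the (non-)symmetry of the matrix $(a_{ij})$: the operator $T$ is self-adjoint for the weighted inner product, not for the standard Hermitian form on $\C^n$, so its matrix in the basis $\{e_i\}$ is not symmetric but rather satisfies $w_i a_{ji} = w_j a_{ij}$ (assuming real entries). I would make sure to state this relation explicitly and use it to convert the $a_{ji}$ that naturally emerges into the $w_j a_{ij}$ of the statement. Everything else is a formal consequence of orthonormal-basis completeness and the explicit formula $(\psi, e_j) = w_j^{-1}\psi(x_j)$, so no real analytic or arithmetic input is needed.
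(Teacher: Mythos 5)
Your proof is correct and is essentially the paper's own argument: both express $(T\phi)(x_i)$ and $\overline{\phi(x_j)}$ as inner products against the delta functions $w_ie_i$, $w_je_j$, move $T$ across using self-adjointness, apply the orthogonal-basis expansion (Parseval) to collapse the sum to $(e_j,Te_i)$ up to weights, and finish with the weighted symmetry relation $w_i\overline{a_{ji}}=w_ja_{ij}$. Your extra care in spelling out that the matrix of $T$ is self-adjoint only for the weighted inner product is exactly the bookkeeping implicit in the paper's final equality $w_ia_{ji}=w_ja_{ij}$.
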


\begin{proof}   
The left hand side of the equation above equals
\[ \sum_{\phi \in \Phi} \frac{(T \phi, w_i e_i)\overline{(\phi, w_j e_j)}}{(\phi, \phi)} =
  \sum_{\phi \in \Phi} \frac{(\phi, T w_i e_i)\overline{(\phi, w_j e_j)}}{(\phi, \phi)} 
  = \overline{(T w_i e_i, w_j e_j)} = w_i a_{ji} = w_j a_{ij}. \]
\end{proof}

\begin{rem} In the special case $T=T_\frako$ is the identity, we get
\begin{equation} \label{eq:col-orthog}
\sum_{\phi \in \Phi} \frac{\phi(x_i) \overline{\phi(x_j)}}{(\phi,\phi)}
= \delta_{ij} w_i.
\end{equation}
One can think of quaternionic eigenforms, which are functions on
finite class sets associated to irreducible representations, as a kind of
 analogue of characters of
finite groups.  With this point of view, we can think of \eqref{eq:col-orthog}
as an analogue of column orthogonality for characters 
of finite groups.  Here is a proof of \eqref{eq:col-orthog} 
which is perhaps more parallel to a typical proof of column orthogonality:
 
Write $\Phi = \{ \phi_1, \dots, \phi_n \}$.
Let $A = (a_{ij})$ be the $n \times n$ matrix with $a_{ij} = \frac{\phi_i(x_j)}{\sqrt{w_j(\phi_i,
\phi_i)}}$.  Then $A {}^t \bar A = I$, and \eqref{eq:col-orthog}
follows from orthogonality of the columns of $A$.
\end{rem}

\section{Periods and embeddings} \label{sec:per}

In this section, we study periods of quaternionic
modular forms and their averages.  As in the previous section,
we will often work with arbitrary orders $\calO \subset B$,
but in certain cases restrict to special orders.

\subsection{Periods and averages} \label{sec:41}
Let $\calO$ be a $\frako$-order of $B$, and $K/F$ a quadratic subfield of $B$
such that $\frako_K \subset \calO$.
Define the ideal class map $t \mapsto x(t)$ from $\Cl(K)$ to $\Cl(\calO)$
by $K^\times t \hat \frako_K^\times \mapsto B^\times t \hat \calO^\times$.
This map depends on the specific embedding of $K$ into $B$.
Let $h_K$ be the class number of $K$.

Let $\Pic_\C(\Cl(\calO))$ be the set of formal $\C$-linear combinations of
elements of $\Cl(\calO)$.  For $c = \sum  c_i x_i, d = \sum d_i x_i \in \Pic_\C(\calO)$,
define the height pairing
\[ \langle c, d \rangle = \sum_{i=1}^n w_i c_i \overline{d_i}, \]
and let $\check c \in M(\calO)$ be the dual element given by $\check c(x_i) = w_i c_i$
for $1 \le i \le n$.
Then $c \mapsto \check c$ defines an isometry of $\Pic_\C(\Cl(\calO))$ with
$M(\calO)$.

For $\chi \in \widehat{\Cl}(K)$, consider the complex divisor 
$c_{K,\chi} = \sum \chi(t) x(t) \in \Pic_\C(\Cl(\calO))$, 
which we also denote by $c_K$ when $\chi = 1$.  
For $\phi \in M(\calO)$, we define the period along $K$ against $\chi$ to be 
\begin{equation} \label{per-def}
 P_{K,\chi}(\phi) = (\phi, \check c_{K,\chi}) =  \sum_{t \in \Cl(K)} \phi(x(t)) \chi^{-1}(t). 
\end{equation}
We also denote this by $P_K(\phi)$ when $\chi = 1$.
Define the ideal class embedding numbers
 $h_i = h_{K,i} = \# \{ t \in \Cl(K) : x(t) = x_i \}$ for $1 \le i \le n$.  
 Thus $\sum h_i = h_K$,
and $P_K(\phi) = \sum h_i \phi(x_i)$.

We consider averages of absolute squares of periods in two directions.
First, note that orthogonality of characters of $\Cl(K)$ implies that 
\begin{equation} \label{eq:cl-avg}
\sum_{\chi \in \widehat{\Cl}(K)} | P_{K,\chi}(\phi) |^2 = h_K \sum_{t \in \Cl(K)} |\phi(x(t))|^2
= h_K \sum_{i=1}^n h_i |\phi(x_i)|^2. 
\end{equation}
This expression was used in \cite{MV} to study such averages asymptotically
(for simplicity, restricted to prime level over $\Q$).

Second, for $T = (a_{ij})$ a real self-adjoint operator on $M(\calO)$ and
$\Phi$ an orthogonal basis for $M(\calO)$, we have
\begin{equation} \label{eq:phi-avg}
\sum_{\phi \in \Phi} \frac{P_{K,\chi}(T\phi) \overline{P_{K,\chi}(\phi)}}{(\phi,\phi)} =
\sum_{\phi \in \Phi} \frac{(\phi, T\check c_{K,\chi})(\check c_{K,\chi}, \phi)}{
(\phi, \phi)} = (T \check c_{K,\chi}, \check c_{K,\chi}).
\end{equation}
This expression (in terms of the height pairing) was used
for the exact average formula in \cite{MR}.

Let $\frakN' = \frakN {\mathfrak D}^{-1}_B$, where $\frakN$ is the (reduced)
discriminant of $\calO$.  Thus $\frakp | \frakN'$ if and only if $\calO_\frakp$ is not
a maximal order. Then for a nonzero integral ideal
$\frakm \subset \frako$ which is coprime to $\frakN'$, and for any $\pi$
occurring in $M(\calO)$, $T_\frakm$ acts on $\pi^{\hat \calO^\times}$ by a
scalar $\lambda_\frakm(\pi)$ since the local space $\pi_v^{\calO_v^\times}$
of invariants is 1-dimensional for $v | \frakm$.  For $\phi \in \pi^{\hat \calO^\times}$, set
$\lambda_\frakm(\phi) = \lambda_\frakm(\pi)$.

\begin{prop} \label{prop:per-avg}
Let $\Phi$ be an orthogonal basis of eigenforms for $M(\calO)$.  For a nonzero
integral ideal $\frakm \subset \frako$ coprime to $\frakN'$, consider the divisor $a(\frakm) =\sum_{i=1}^n a_{ii}(\frakm) x_i$ associated to the
diagonal of the Brandt matrix $A_\frakm$.  Then we have
\begin{equation} \label{eq:doub-per}
 \sum_{\phi \in \Phi} \sum_{\chi \in \widehat{\Cl}(K)} \frac{ \lambda_\frakm(\phi) | P_{K,\chi}(\phi) |^2}{(\phi,\phi)} 
 = h_K \langle c_K, a(\frakm) \rangle =
 h_K \sum_{i=1}^n w_i h_i a_{ii}(\frakm).
\end{equation}
\end{prop}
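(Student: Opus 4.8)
The plan is to combine the two orthogonality identities already assembled in the excerpt. The period $P_{K,\chi}(\phi)$ is by definition the inner product $(\phi, \check c_{K,\chi})$ against the fixed element $\check c_{K,\chi} \in M(\calO)$, so the left-hand side of \eqref{eq:doub-per} is a sum over $\chi$ of quantities of the shape $\sum_{\phi \in \Phi} \lambda_\frakm(\phi) (\phi,\check c_{K,\chi})\overline{(\phi,\check c_{K,\chi})}/(\phi,\phi)$. First I would observe that, because $\frakm$ is coprime to $\frakN'$, the Hecke operator $T_\frakm$ acts on each $\pi^{\hat\calO^\times}$ by the scalar $\lambda_\frakm(\pi)$, so for an eigenbasis $\Phi$ we have $T_\frakm \phi = \lambda_\frakm(\phi)\phi$ and hence $\lambda_\frakm(\phi)(\phi,\check c_{K,\chi}) = (T_\frakm\phi, \check c_{K,\chi})$. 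This rewrites the $\phi$-sum as $\sum_{\phi}(T_\frakm\phi,\check c_{K,\chi})\overline{(\phi,\check c_{K,\chi})}/(\phi,\phi)$, which is exactly the form of \eqref{eq:phi-avg} with $T = T_\frakm$. Applying \eqref{eq:phi-avg} (whose proof is the bilinear-expansion identity already given) collapses this to $(T_\frakm \check c_{K,\chi}, \check c_{K,\chi})$.

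Next I would unwind $\check c_{K,\chi}$. Since $\check c_{K,\chi}(x_i) = w_i \sum_{x(t)=x_i}\chi(t) = w_i\chi$-weighted count, and $T_\frakm$ is multiplication by the Brandt matrix $A_\frakm = (a_{ij}(\frakm))$ on the vector $[\,\cdot\,]$ (self-adjoint with respect to the $w$-weighted inner product), expanding $(T_\frakm\check c_{K,\chi},\check c_{K,\chi})$ gives a double sum $\sum_{i,j} w_i^{-1}(A_\frakm[\check c_{K,\chi}])_i\overline{\check c_{K,\chi}(x_i)}$, i.e. $\sum_{i,j} a_{ij}(\frakm)\,\overline{c_i}\,c_j\,w_j$ in terms of the coefficients $c_i = \sum_{x(t)=x_i}\chi(t)$ of the divisor $c_{K,\chi}$ — more cleanly, $(T_\frakm \check c_{K,\chi},\check c_{K,\chi}) = \langle T_\frakm c_{K,\chi}, c_{K,\chi}\rangle$ under the isometry $c\mapsto\check c$, where $T_\frakm$ acts on $\Pic_\C(\Cl(\calO))$ via the Brandt matrix. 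Now I would sum over $\chi \in \widehat{\Cl}(K)$ and use character orthogonality on $\Cl(K)$: $\sum_\chi \overline{\chi(t)}\chi(t') = h_K\,\delta_{[t]=[t']}$. This forces the two summation indices over $\Cl(K)$ to coincide, producing $h_K$ times $\sum_{t\in\Cl(K)} (T_\frakm c_{K})(x(t))$ paired appropriately — concretely $h_K\langle T_\frakm c_K, c_K\rangle$ where now $c_K = \sum x(t)$, i.e. $h_K\sum_i w_i h_i a_{ii}(\frakm)$ after noting that the cross terms $a_{ij}(\frakm)$ with $i\neq j$ are eliminated because the diagonal-divisor $a(\frakm)$ bookkeeping only records $x_i\mapsto x_i$; equivalently one checks directly that $\langle T_\frakm c_K, c_K\rangle = \langle c_K, a(\frakm)\rangle = \sum_i w_i h_i a_{ii}(\frakm)$, since both sides count, with weight $w_i h_i^2$... — here I would be careful: it is $\langle c_K, a(\frakm)\rangle$ with $a(\frakm) = \sum a_{ii}(\frakm) x_i$, so $\langle c_K, a(\frakm)\rangle = \sum_i w_i h_i \overline{a_{ii}(\frakm)} = \sum_i w_i h_i a_{ii}(\frakm)$ as the Brandt entries are real.

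The one genuine point requiring care — the main obstacle — is the passage from $h_K\langle T_\frakm c_K, c_K\rangle$ to $h_K\langle c_K, a(\frakm)\rangle$, i.e. verifying that only the diagonal Brandt entries survive. After applying character orthogonality the surviving expression is $h_K \sum_{t}\sum_{i} a_{x(t)\,i}(\frakm)\cdots$, and one must confirm that the constraint coming from orthogonality pins both $\Cl(K)$-indices into the same ideal class $x_i$, so that the relevant Brandt entry is $a_{ii}(\frakm)$ and the multiplicity is $h_i$ on each side, giving $\sum_i w_i h_i a_{ii}(\frakm)$. A clean way to organize this is: $\sum_\chi |P_{K,\chi}(\phi)|^2$-type manipulation is best done by first fixing $\phi$ and summing over $\chi$ using \eqref{eq:cl-avg} (already in the excerpt), which gives $\sum_\chi \lambda_\frakm(\phi)|P_{K,\chi}(\phi)|^2 = h_K\sum_i h_i \lambda_\frakm(\phi)|\phi(x_i)|^2 = h_K\sum_i h_i \overline{\phi(x_i)}(T_\frakm\phi)(x_i)$; then sum over the eigenbasis $\Phi$ and apply \cref{col-orthog} with the self-adjoint operator $T_\frakm = (a_{ij}(\frakm))$, which yields $\sum_\phi (T_\frakm\phi)(x_i)\overline{\phi(x_i)}/(\phi,\phi) = w_i a_{ii}(\frakm)$. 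Substituting gives $h_K\sum_i h_i w_i a_{ii}(\frakm)$ directly, which is the claimed formula; the middle expression $h_K\langle c_K, a(\frakm)\rangle$ is then just the definition of the height pairing. This route sidesteps the diagonal-extraction issue entirely by doing the $\chi$-average first, so I would present the proof in that order.
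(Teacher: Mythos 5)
Your final organization of the argument—first averaging over $\chi$ via \eqref{eq:cl-avg}, then summing over the eigenbasis using \cref{col-orthog} with $T=T_\frakm$ and $i=j$—is exactly the paper's proof, and it is correct. The longer preliminary route through \eqref{eq:phi-avg} and character orthogonality also works, but your decision to present the $\chi$-average-first version matches the paper's two-line argument.
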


\begin{proof} 
This follows from applying \eqref{eq:cl-avg} and then
\cref{col-orthog} with $i=j$ to the left hand side.  
\end{proof}

For $\frakm$ as above, define the degree of $\deg T_\frakm$ on $M(\calO)$
to be the number of integral right $\calO$-ideals of norm $\frakm$.  Note
each row of the Brandt matrix $A_\frakm$ sums to $\deg T_\frakm$.  For a prime
$\frakp \nmid \frakN$, note $\deg T_\frakp = q_\frakp+1$, where 
$q_\frakp = \#(\frako/\frakp)$.

\begin{cor} \label{cor:per-avg}
Let $\Phi_0$ be an orthogonal basis of eigenforms for $S(\calO)$.
If $\frakm$ is coprime to $\frakN'$,
then
\[ \sum_{\phi \in \Phi_0} \sum_{\chi \in \widehat{\Cl}(K)} \frac{ \lambda_\frakm(\phi) | P_{K,\chi}(\phi) |^2}{(\phi,\phi)} 
=h_K \langle c_K, a(\frakm) \rangle - \delta^+(\calO,\frakm)\deg T_\frakm \frac{h_K^2 |\Cl^+(N(\hat \calO))|}{m(\calO)}, \]
where $\delta^+(\calO,\frakm)$ is $1$ if the class of 
$\frakm$ is trivial in $\Cl^+(N(\hat \calO))$ and $0$ otherwise. 
\end{cor}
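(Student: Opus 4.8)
The plan is to start from the full-space formula in \cref{prop:per-avg} and subtract off the contribution of the Eisenstein subspace $\Eis(\calO)$, which is the orthogonal complement of $S(\calO)$ in $M(\calO)$. Since an orthogonal basis $\Phi$ of $M(\calO)$ can be taken to be the union of an orthogonal basis $\Phi_0$ of $S(\calO)$ and an orthogonal basis $\Phi_{\Eis}$ of $\Eis(\calO)$, and both $T_\frakm$ and the period functionals $P_{K,\chi}$ respect this decomposition, we get
\[
\sum_{\phi \in \Phi_0} \sum_{\chi} \frac{\lambda_\frakm(\phi)|P_{K,\chi}(\phi)|^2}{(\phi,\phi)}
= h_K \sum_i w_i h_i a_{ii}(\frakm)
- \sum_{\phi \in \Phi_{\Eis}} \sum_{\chi} \frac{\lambda_\frakm(\phi)|P_{K,\chi}(\phi)|^2}{(\phi,\phi)}.
\]
So the whole problem reduces to evaluating the Eisenstein sum on the right.

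First I would note that, by \eqref{eq:phi-avg} applied with $T = T_\frakm$, the Eisenstein sum equals $(T_\frakm \, \pi_{\Eis}(\check c_{K,\chi}), \pi_{\Eis}(\check c_{K,\chi}))$ summed over $\chi$, where $\pi_{\Eis}$ is orthogonal projection onto $\Eis(\calO)$; but it is cleaner to work directly with the eigenforms. The basis $\Phi_{\Eis}$ consists of the characters $\mu \circ N_{B/F}$ for $\mu$ running over characters of $\Cl^+(N(\hat\calO))$ (as recalled before \cref{mass-form}), each of norm $m(\calO)$. For such $\phi = \mu\circ N$, the period is $P_{K,\chi}(\mu\circ N) = \sum_{t\in\Cl(K)} (\mu\circ N)(x(t))\,\chi^{-1}(t)$, and the key observation is that $t \mapsto (\mu\circ N)(x(t))$ is itself a character of $\Cl(K)$ — namely $(\mu \circ N_{K/F})$, the restriction of $\mu$ along the norm from $K$ — because $N_{B/F}$ restricted to $K^\times$ is $N_{K/F}$ and the ideal class map is a group homomorphism. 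Hence by orthogonality of characters of $\Cl(K)$,
\[
\sum_{\chi\in\widehat{\Cl}(K)} |P_{K,\chi}(\mu\circ N)|^2 = h_K \sum_{t\in\Cl(K)} |(\mu\circ N)(x(t))|^2 = h_K \cdot h_K = h_K^2,
\]
independent of $\mu$. Meanwhile $\lambda_\frakm(\mu\circ N) = \mu(\frakm)$ (the Hecke eigenvalue of an Eisenstein/character form at $\frakm$, up to the normalization built into $\deg T_\frakm$); more precisely, tracking the definition of $T_\frakm$ via integral right $\calO$-ideals of norm $\frakm$, one gets $\lambda_\frakm(\mu\circ N) = \mu(\frakm)\cdot(\text{number of such ideals}) = \mu(\frakm)\deg T_\frakm$ when $\frakm$ has trivial class, with cancellation otherwise. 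Combining,
\[
\sum_{\phi\in\Phi_{\Eis}}\sum_\chi \frac{\lambda_\frakm(\phi)|P_{K,\chi}(\phi)|^2}{(\phi,\phi)}
= \frac{h_K^2 \deg T_\frakm}{m(\calO)} \sum_{\mu \in \widehat{\Cl}^+(N(\hat\calO))} \mu(\frakm).
\]

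Finally I would apply orthogonality of characters of the finite abelian group $\Cl^+(N(\hat\calO))$: the inner sum $\sum_\mu \mu(\frakm)$ is $|\Cl^+(N(\hat\calO))|$ if the class of $\frakm$ in $\Cl^+(N(\hat\calO))$ is trivial and $0$ otherwise — which is exactly the definition of $\delta^+(\calO,\frakm)$. Substituting this and recalling $\langle c_K, a(\frakm)\rangle = \sum_i w_i h_i a_{ii}(\frakm)$ from \cref{prop:per-avg} gives the stated formula. The main technical point to get right is the precise normalization in the claim $\lambda_\frakm(\mu\circ N) = \mu(\frakm)\deg T_\frakm$ on the trivial class and the vanishing/cancellation on nontrivial classes — i.e., checking that the Hecke action on a one-dimensional $\mu\circ N$ counts integral right $\calO$-ideals of norm $\frakm$ weighted by $\mu$ of their class, and that this matches $\deg T_\frakm$ exactly when $\frakm$ is a norm from $N(\hat\calO)$ up to squares/totally positive units; this is where the bookkeeping with $\Cl^+(N(\hat\calO))$ versus $\Cl(F)$ must be handled carefully, but it is routine given the setup in Section~3.
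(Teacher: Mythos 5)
Your proposal is correct and follows essentially the same route as the paper: subtract the Eisenstein contribution from \cref{prop:per-avg}, note that the Eisenstein eigenforms are the characters $\mu \circ N$ of norm $m(\calO)$ with $\sum_\chi |P_{K,\chi}(\mu\circ N)|^2 = h_K^2$ and $\lambda_\frakm(\mu\circ N) = \mu(\frakm)\deg T_\frakm$, and finish by character orthogonality on $\Cl^+(N(\hat\calO))$. The only imprecision is your aside that the eigenvalue formula holds ``when $\frakm$ has trivial class, with cancellation otherwise'': in fact $\lambda_\frakm(\mu\circ N)=\mu(\frakm)\deg T_\frakm$ for every $\frakm$ coprime to $\frakN'$ (with $\mu(\frakm)$ the value of $\mu$ on the class of $\frakm$, a root of unity), and the cancellation occurs only in the final sum over $\mu$ --- exactly as your displayed formula already uses.
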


\begin{proof} 
Recall that a basis of $\Eis(\calO)$ is given by $\{ \mu \circ N \}$, where
$\mu$ runs over the characters of $\Cl^+(N(\hat \calO))$.
For $\phi = \mu \circ N$, we see that
$P_{K,\chi}(\phi) = \sum_{t \in \Cl(K)} \mu(N_{K/F}(t))\chi^{-1}(t)$
is $h_K$ if $(\mu \circ N)|_{\Cl(K)} = \chi$ and 0 otherwise.
Since each such $\phi$ has norm $(\phi, \phi) = m(\calO)$ and
$\lambda_\frakm(\mu \circ N) = \mu(\frakm) \lambda_\frakm(\one) =
\mu(\frakm) \deg T_\frakm$, we see that
the Eisenstein contribution to \eqref{eq:doub-per} is
\[ \sum_\mu \lambda_\frakm (\mu \circ N) \frac{h_K^2}{m(\calO)} = \frac{h_K^2}{m(\calO)} \sum_\mu \deg T_\frakm \, \mu(\frakm), \] 
where $\mu$ runs over the characters of $\Cl^+(N(\hat \calO))$.
\end{proof}

This double average formula simplifies in various situations.  We explicate two now.
We call $\calO$ \emph{balanced} if $w_i = 1$ for all $1 \le i \le n$, i.e., if
$n = m(\calO)$.
(Balanced implies that each ideal class in $\Cl(\calO)$ has the same volume with respect to a Haar measure, though it is not exactly equivalent.)  In \cref{sec:bal}, we give
elementary criteria for orders to be balanced.

\begin{cor} \label{cor:per-avg-bal}
Let $\Phi_0$ be an orthogonal basis of eigenforms
for $S(\calO)$.  Then:

\begin{enumerate}[(i)]
\item If $\calO$ is balanced, then
\[ \sum_{\phi \in \Phi_0} \sum_{\chi \in \widehat{\Cl}(K)} \frac{| P_{K,\chi}(\phi) |^2}{(\phi,\phi)} 
=h_K^2 \left( 1  -  \frac{|\Cl^+(N(\hat \calO))|}{m(\calO)} \right). \] 

\item
If $h_F^+ = 1$, $\calO$ is special of unramified quadratic type, and 
$\frakm$ is coprime to $\frakN'$,
then
\[ \sum_{\phi \in \Phi_0} \sum_{\chi \in \widehat{\Cl}(K)} \frac{ \lambda_\frakm(\phi) | P_{K,\chi}(\phi) |^2}{(\phi,\phi)} 
=h_K \sum_{i=1}^n w_i h_i a_{ii}(\frakm) - \deg T_\frakm \frac{h_K^2}{m(\calO)}. \]
\end{enumerate}
\end{cor}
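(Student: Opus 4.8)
\textbf{Proof plan for Corollary~\ref{cor:per-avg-bal}.}
The plan is to derive both parts directly from \cref{cor:per-avg}, which already provides the general double average formula with the Eisenstein correction term. For part~(i), the hypothesis that $\calO$ is balanced means $w_i = 1$ for all $i$, so $n = m(\calO)$ and the identity $\sum h_i = h_K$ forces $\langle c_K, a(\frakm)\rangle = \sum_i h_i a_{ii}(\frakm)$. First I would specialize $\frakm = \frako$, so that $A_\frako = I$, hence $a_{ii}(\frako) = 1$ and $\deg T_\frako = 1$. The main term then collapses: $h_K \sum_i w_i h_i a_{ii}(\frako) = h_K \sum_i h_i = h_K^2$, using balancedness. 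Since $\delta^+(\calO,\frako) = 1$ (the trivial ideal is trivially trivial in $\Cl^+(N(\hat\calO))$), \cref{cor:per-avg} yields $h_K^2 - \deg T_\frako \cdot h_K^2 |\Cl^+(N(\hat\calO))|/m(\calO) = h_K^2(1 - |\Cl^+(N(\hat\calO))|/m(\calO))$, which is exactly the claimed formula.

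For part~(ii), the hypotheses are $h_F^+ = 1$ and $\calO$ special of unramified quadratic type. The key structural input is that for such orders $N(\hat\calO^\times) = \hat\frako^\times$, as recorded in \cref{sec:spec}; combined with $h_F^+ = 1$ this gives $\Cl^+(N(\hat\calO)) = \Cl^+(F)$, which is trivial, so $|\Cl^+(N(\hat\calO))| = 1$. I would substitute this into the statement of \cref{cor:per-avg}: the Eisenstein term becomes $\delta^+(\calO,\frakm) \deg T_\frakm \cdot h_K^2/m(\calO)$. The remaining point is that when $N(\hat\calO^\times) = \hat\frako^\times$ and $h_F^+ = 1$, every nonzero integral ideal $\frakm$ has trivial class in the (trivial) group $\Cl^+(N(\hat\calO))$, so $\delta^+(\calO,\frakm) = 1$ unconditionally. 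This eliminates the $\delta^+$ factor and produces precisely $h_K \sum_i w_i h_i a_{ii}(\frakm) - \deg T_\frakm \cdot h_K^2/m(\calO)$.

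Neither step involves a genuine obstacle; the work is entirely bookkeeping of which special-order hypothesis trivializes which term in \cref{cor:per-avg}. The one point that deserves a sentence of care is the assertion that, for $\calO$ of unramified quadratic type with $h_F^+ = 1$, the ideal $\frakm$ is always trivial in $\Cl^+(N(\hat\calO))$ — this is immediate once one notes $\Cl^+(N(\hat\calO)) = F^\times \backslash \A_F^\times / N(\hat\calO^\times)F_\infty^+ = F^\times\backslash\A_F^\times/\hat\frako^\times F_\infty^+ = \Cl^+(F)$ and $h_F^+ = 1$. I would state this explicitly to justify dropping $\delta^+$ and leave the rest to direct substitution into \cref{cor:per-avg}.
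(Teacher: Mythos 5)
Your proposal is correct and matches the paper's (implicit) argument: both parts are direct specializations of \cref{cor:per-avg}, with (i) obtained by taking $\frakm=\frako$ and using $w_i=1$, $a_{ii}(\frako)=1$, $\deg T_\frako=1$, and (ii) obtained from the fact that $N(\hat\calO^\times)=\hat\frako^\times$ gives $\Cl^+(N(\hat\calO))=\Cl^+(F)$, which is trivial when $h_F^+=1$, so $|\Cl^+(N(\hat\calO))|=1$ and $\delta^+(\calO,\frakm)=1$ always. No gaps; the bookkeeping is exactly as the paper intends.
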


\subsection{Exact averages and stability}

Here we will use the double average formula in the previous section
to obtain some precise results about the single averages \eqref{eq:phi-avg}
for fixed $\chi$.  We continue the notation of the
previous section.  In particular, $\frako_K \subset \calO$, where
$\calO$ is an arbitrary order in $B$.

\begin{lemma} \label{lem:stab1}
We have $\langle c_{K,\chi}, c_{K,\chi} \rangle \le
\langle c_{K}, c_{K} \rangle$ for all $\chi \in \widehat{\Cl}(K)$.
Further, $\langle c_{K,\chi}, c_{K,\chi} \rangle = \langle c_{K}, c_{K} \rangle$
for all $\chi$ if and only if $h_i \le 1$ for all $1 \le i \le n$.
\end{lemma}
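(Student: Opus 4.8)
The plan is to compute both sides in coordinates. Writing $c_{K,\chi} = \sum_{i=1}^n c_i(\chi)\,x_i$ in the basis $\{x_1,\dots,x_n\}$ of $\Pic_\C(\Cl(\calO))$, the coefficient at $x_i$ is $c_i(\chi) = \sum_{t\,:\,x(t)=x_i}\chi(t)$, a sum of $h_i$ complex numbers of modulus $1$; in particular $c_i(1)=h_i$. By the definition of the height pairing, $\langle c_{K,\chi},c_{K,\chi}\rangle = \sum_i w_i|c_i(\chi)|^2$ and $\langle c_K,c_K\rangle = \sum_i w_i h_i^2$.

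For the inequality I would apply the triangle inequality termwise, $|c_i(\chi)|\le \sum_{t\,:\,x(t)=x_i}|\chi(t)| = h_i$, valid for every $i$ and every $\chi$; since each $w_i$ is a positive integer, squaring and summing gives $\langle c_{K,\chi},c_{K,\chi}\rangle \le \langle c_K,c_K\rangle$. If $h_i\le 1$ for all $i$, then each sum defining $c_i(\chi)$ has at most one term, so $|c_i(\chi)|=h_i$ and equality holds for every $\chi$.

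For the converse, suppose $h_j\ge 2$ for some $j$ and pick distinct $t_1,t_2\in\Cl(K)$ with $x(t_1)=x(t_2)=x_j$. Since the characters of the finite abelian group $\Cl(K)$ separate points and $t_1 t_2^{-1}\ne 1$, there is $\chi\in\widehat{\Cl}(K)$ with $\chi(t_1)\ne\chi(t_2)$. The unit-modulus numbers $\{\chi(t):x(t)=x_j\}$ are then not all equal, so the triangle inequality at index $j$ is strict, $|c_j(\chi)|<h_j$; combined with $|c_i(\chi)|\le h_i$ for the remaining $i$ and $w_j\ge 1$, this forces $\langle c_{K,\chi},c_{K,\chi}\rangle<\langle c_K,c_K\rangle$.

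There is no serious obstacle here. The only points to watch are that the defect $|c_j(\chi)|^2<h_j^2$ at the single index $j$ is not cancelled by the remaining indices --- which is automatic, since $|c_i(\chi)|\le h_i$ holds simultaneously for all $i$ --- and the standard fact that equality in $|\sum z_k|\le\sum|z_k|$ for unit-modulus $z_k$ forces all the $z_k$ to coincide, which is what makes the character-separation step produce a strict inequality. One could alternatively package the equality case by averaging the pointwise inequality over $\chi$ and using $\sum_\chi\langle c_{K,\chi},c_{K,\chi}\rangle = h_K\sum_i w_i h_i$, reducing equality-for-all-$\chi$ to $\sum_i w_i h_i(h_i-1)=0$, i.e. $h_i\le 1$ for all $i$.
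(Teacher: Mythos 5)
Your proof is correct and follows essentially the same route as the paper's: expand $\langle c_{K,\chi}, c_{K,\chi}\rangle = \sum_i w_i\,\bigl\lvert \sum_{x(t)=x_i}\chi(t)\bigr\rvert^2$, bound each inner sum by $h_i$ via the triangle inequality, and use separation of points by characters of $\Cl(K)$ to get strictness when some $h_i \ge 2$. You merely spell out the equality-case of the triangle inequality (and offer an averaging repackaging) that the paper leaves implicit.
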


\begin{proof}
Note that 
\[ \langle c_{K,\chi}, c_{K,\chi} \rangle = \sum_{i=1}^n w_i \, \Bigl \lvert \sum_{x(t) = x_i} \chi(t) \Bigr \rvert^2, \]
where in the inner sum $t$ runs over the elements of $\Cl(K)$ which map to $x_i$ via
the ideal class map.  Clearly this is maximized for $\chi$ being the trivial character,
and we have equality among all $\chi$ when $h_i = \# \{ t \in \Cl(K) : x(t) = x_i \} \le 1$ for all $i$.
Conversely, if $h_i > 1$ for some $i$, then there exist distinct $t, t' \in \Cl(K)$
such that $x(t) = x(t') = x_i$.  Now $\chi(t) \ne \chi(t')$ for some 
$\chi \in \widehat{\Cl}(K)$, and we obtain a strict inequality for such $\chi$.
\end{proof}

We say that the pair $(\calO, K)$ lies in the \emph{semistable range} if $h_i \le 1$ for all $1 \le i \le n$, and in the \emph{stable range} if in addition $h_i = 1$ implies
$w_i = u_K := [\frako_K^\times : \frako^\times]$. Note that if $h_i > 0$, then
$\frako_K \subset \calO_\ell(\calI_i)$, and thus $u_K | w_i$.  If $\calO$ is balanced,
then the stable range is the same as the semistable range.

\begin{cor} \label{cor:semistable}
Let $\Phi_0$ be an orthogonal basis of eigenforms for $S(\calO)$ and $\chi \in
\widehat{\Cl}(K)$.  Then we have the bounds
\[ \sum_{i=1}^n w_i h_i - m^+(\calO, 1) \frac{h_K^2}{m(\calO)} \le 
\sum_{\phi \in \Phi_0} \frac{|P_{K}(\phi)|^2}{(\phi,\phi)} \le 
h_K \sum_{i=1}^n w_i h_i - m^+(\calO, 1)  \frac{h_K^2}{m(\calO)},
\]
where $m^+(\calO,\chi)$ is the number of characters $\mu$ of $\widehat{\Cl}{}^+(N(\hat \calO))$
such that $\mu \circ N_{K/F} = \chi$.
The lower bound is an equality if and only if $(\calO, K)$ in the semistable range, in which
case we have
\[ \sum_{\phi \in \Phi_0} \frac{|P_{K,\chi}(\phi)|^2}{(\phi,\phi)} =
 \sum_{i=1}^n w_i h_i - m^+(\calO, \chi) \frac{h_K^2}{m(\calO)} \]
 for all $\chi \in \widehat{\Cl}(K)$.  
\end{cor}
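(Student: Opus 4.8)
The plan is to deduce \cref{cor:semistable} from the double‑average identity in \cref{cor:per-avg}. First I would rewrite the double average over $f$ and $\chi$: by \eqref{eq:phi-avg} the inner sum over $\phi\in\Phi_0$ of $|P_{K,\chi}(\phi)|^2/(\phi,\phi)$ equals $\langle c_{K,\chi},c_{K,\chi}\rangle$ minus the Eisenstein contribution, so summing over $\chi$ and applying \cref{cor:per-avg} (with $\frakm = \frako$, so that $\lambda_\frakm = 1$, $\deg T_\frakm = 1$, and $\delta^+(\calO,\frako)=1$) gives
\[
\sum_{\chi\in\widehat{\Cl}(K)}\ \sum_{\phi\in\Phi_0}\frac{|P_{K,\chi}(\phi)|^2}{(\phi,\phi)}
= h_K\sum_{i=1}^n w_i h_i\, a_{ii}(\frako) - \frac{h_K^2\,|\Cl^+(N(\hat\calO))|}{m(\calO)}
= h_K\sum_{i=1}^n w_i h_i - \frac{h_K^2\,|\Cl^+(N(\hat\calO))|}{m(\calO)},
\]
using $a_{ii}(\frako)=1$. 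For a fixed $\chi$, the Eisenstein part of $\sum_{\phi\in\Phi_0}|P_{K,\chi}(\phi)|^2/(\phi,\phi)$ removed in passing from $M(\calO)$ to $S(\calO)$ is $m^+(\calO,\chi)\,h_K^2/m(\calO)$, since (as in the proof of \cref{cor:per-avg}) each Eisenstein eigenform $\mu\circ N$ contributes $h_K^2/m(\calO)$ to the $\chi$-period exactly when $(\mu\circ N)|_{\Cl(K)}=\chi$, and there are $m^+(\calO,\chi)$ such $\mu$.

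Next I would establish the two inequalities. For the upper bound: since $|P_{K,\chi}(T_\frako\phi)\overline{P_{K,\chi}(\phi)}|$ contributions are all nonnegative, the single-$\chi$ cuspidal sum is at most the full sum over all $\chi'$, i.e.\ $\sum_{\phi\in\Phi_0}|P_{K,\chi}(\phi)|^2/(\phi,\phi)\le \sum_{\chi'}\sum_{\phi\in\Phi_0}|P_{K,\chi'}(\phi)|^2/(\phi,\phi)$; plugging in the displayed double average and observing $m^+(\calO,\chi)\le |\Cl^+(N(\hat\calO))|$ — actually more simply, subtracting the $\chi$‑Eisenstein term and bounding — yields the stated upper bound $h_K\sum w_i h_i - m^+(\calO,1)h_K^2/m(\calO)$ after noting $m^+(\calO,1)$ is the largest of the $m^+(\calO,\chi)$ (the fibers of $\mu\mapsto \mu\circ N_{K/F}$ over the trivial character are at least as big as over any other). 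For the lower bound: by \cref{lem:stab1}, $\langle c_{K,\chi},c_{K,\chi}\rangle \le \langle c_K,c_K\rangle = \sum_i w_i h_i^2$, but I instead want a lower estimate, so I would argue that for the \emph{trivial} character the single average is $\langle c_K,c_K\rangle - m^+(\calO,1)h_K^2/m(\calO)$, which is $\ge \sum_i w_i h_i - m^+(\calO,1)h_K^2/m(\calO)$ since $h_i^2\ge h_i$; for general $\chi$ one uses that all the $|P_{K,\chi}(\phi)|^2$ are real and nonnegative together with the trivial-character comparison.

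Finally I would pin down the equality case. The lower bound is an equality precisely when $h_i^2 = h_i$, i.e.\ $h_i\in\{0,1\}$, for all $i$ — which is exactly the semistable condition from the definition preceding the corollary. When this holds, $c_{K,\chi} = \sum_{h_i=1}\chi(t_i)x_i$ where $t_i$ is the unique class mapping to $x_i$, so $\langle c_{K,\chi},c_{K,\chi}\rangle = \sum_{h_i=1}w_i = \sum_i w_i h_i$ is independent of $\chi$; subtracting the $\chi$‑Eisenstein term $m^+(\calO,\chi)h_K^2/m(\calO)$ gives the asserted exact formula for every $\chi$. The main obstacle I anticipate is bookkeeping the Eisenstein contribution correctly for a single fixed $\chi$ — in particular verifying that the relevant count is exactly $m^+(\calO,\chi)$ and that $m^+(\calO,1)$ dominates — and making the chain of inequalities for general (non-semistable) $\chi$ fully rigorous rather than just for $\chi = 1$; the cuspidal-vs-Eisenstein split and nonnegativity of the period-square terms are what make this go through.
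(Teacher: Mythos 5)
Your argument is correct and takes essentially the same route as the paper: both rest on the double-average identity (\cref{prop:per-avg}/\cref{cor:per-avg} with $\frakm=\frako$), the height identity \eqref{eq:phi-avg} for $T=T_\frako$, the estimate $\sum_i w_i h_i \le \langle c_K,c_K\rangle = \sum_i w_i h_i^2 \le h_K\sum_i w_i h_i$ with equality on the left exactly when all $h_i\le 1$ (the paper packages this via \cref{lem:stab1}), and the count of the $\chi$-Eisenstein contribution as $m^+(\calO,\chi)\,h_K^2/m(\calO)$. The only caveat is your closing aside about extending the \emph{lower} bound to nontrivial $\chi$: this is neither needed (the stated bounds concern $P_K$, i.e.\ $\chi=1$; only the exact formula in the semistable case is claimed for all $\chi$) nor valid in general outside the semistable range, so it should simply be dropped.
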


\begin{proof} We extend $\Phi_0$ to an orthogonal basis $\Phi$ of
$M(\calO)$ by adjoining $\mu \circ N$ for each $\mu \in \widehat{\Cl}{}^+(N(\hat \calO))$.
Then comparing \cref{prop:per-avg} (with $\frakm = \frako$),
\eqref{eq:phi-avg} (for $T=T_\frako$ the identity) and \cref{lem:stab1}, we see that
 \[ \sum_{i=1}^n w_i h_i  \le 
\sum_{\phi \in \Phi} \frac{|P_{K}(\phi)|^2}{(\phi,\phi)} \le 
h_K \sum_{i=1}^n w_i h_i, \]
with the lower bound being equality if and only if we are in the semistable range,
in which case \eqref{eq:phi-avg} for $T=T_\frako$ is independent of $\chi$.
Now we simply subtract of the Eisenstein contribution as in
\cref{cor:per-avg}.
\end{proof}

Note that if $h_F^+ = 1$ and $\calO$ is a special order of unramified quadratic type,
$m^+(\calO, \chi)$ is simply 1 if $\chi = 1$ and 0 otherwise.  If in addition we are
in the stable range, then we simply get
 \begin{equation}
  \sum_{\phi \in \Phi_0} \frac{|P_{K,\chi}(\phi)|^2}{(\phi,\phi)} =
h_K\left(u_K - \delta_{\chi,1} \frac{h_K}{m(\calO)} \right), 
\end{equation}
where $\delta_{\chi, \chi'}$ is the Kronecker delta.  This is always nonnegative
because being in the stable range implies $u_K m(\calO) \ge h_K$.

In general, we can make the above bounds more elementary by observing that
\begin{equation}
h_K \le \sum_{i=1}^n w_i h_i \le w_B h_K,
\end{equation}
where $w_B = \max [\calO_B^\times : \frako^\times]$, where $\calO_B$
runs over maximal orders of $B$.  If $F=\Q$ and $D_B > 3$, then
$w_B \le 3$ and can be determined by congruence conditions.

Finally, we remark the following inheritance properties for suborders.
\begin{lemma} \label{sub-prop}
Let $\calO \subset B$ be an order, and $\calO'$ be a suborder.
If $\calO$ is balanced, then so is $\calO'$.
Similarly if $\frako_K \subset \calO'$, and $(\calO,K)$ is in the (semi-)stable
range then so is $(\calO',K)$.
\end{lemma}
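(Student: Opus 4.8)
The plan is to trace through the definitions of ``balanced'' and ``(semi-)stable range'' and check that they are inherited by suborders, with the one nontrivial input being a monotonicity statement for the unit indices $w_i$ under passing to a suborder. Fix representatives so that $\Cl(\calO) = \{ x_1, \dots, x_n \}$ with chosen right $\calO$-ideals $\calI_i$, and $\Cl(\calO') = \{ x_1', \dots, x_{n'}' \}$ with chosen right $\calO'$-ideals $\calI_j'$. The first step is to set up the natural surjection $\Cl(\calO') \to \Cl(\calO)$, $B^\times t \hat\calO'^\times \mapsto B^\times t \hat\calO^\times$ (well-defined since $\hat\calO'^\times \subset \hat\calO^\times$), and to observe that for $x_j' \mapsto x_i$ one has an inclusion of left orders $\calO_\ell(\calI_j') \subset \calO_\ell(\calI_i)$ after conjugating representatives appropriately, since $x_j' \hat\calO' x_j'^{-1} \cap B \subset x_j' \hat\calO x_j'^{-1} \cap B$ and the latter is conjugate in $B^\times$ to $\calO_\ell(\calI_i)$.

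For the balanced claim: $\calO$ balanced means $w_i = [\calO_\ell(\calI_i)^\times : \frako^\times] = 1$ for all $i$. Given a suborder $\calO'$ and any class $x_j' \in \Cl(\calO')$ mapping to some $x_i \in \Cl(\calO)$, the inclusion of left orders gives $\frako^\times \subset \calO_\ell(\calI_j')^\times \subset \calO_\ell(\calI_i)^\times$, and since the outer two agree (both equal to $\frako^\times$ up to index $1$, i.e. $\calO_\ell(\calI_i)^\times = \frako^\times$), we get $\calO_\ell(\calI_j')^\times = \frako^\times$, i.e. $w_j' = 1$. Hence $\calO'$ is balanced. For the (semi-)stable claim, suppose now $\frako_K \subset \calO'$, so also $\frako_K \subset \calO$ and both ideal class maps $\Cl(K) \to \Cl(\calO')$ and $\Cl(K) \to \Cl(\calO)$ are defined, the latter being the composite of the former with $\Cl(\calO') \to \Cl(\calO)$. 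Writing $h_j' = \#\{t : x'(t) = x_j'\}$ for the ideal class embedding numbers for $\calO'$, and $h_i$ for those of $\calO$, the fibers of $\Cl(K) \to \Cl(\calO')$ refine those of $\Cl(K) \to \Cl(\calO)$; hence $h_j' \le h_i$ whenever $x_j' \mapsto x_i$. Therefore $h_i \le 1$ for all $i$ forces $h_j' \le 1$ for all $j$, giving the semistable inheritance. For the stable range we additionally need: if $h_j' = 1$ then $w_j' = u_K$. From $h_j' = 1$ we get $h_i \ge 1$ where $x_j' \mapsto x_i$, hence $h_i = 1$ (by semistability of $\calO$), so the stable hypothesis on $\calO$ gives $w_i = u_K$; combined with $\frako_K^\times \subset \calO_\ell(\calI_j')^\times \subset \calO_\ell(\calI_i)^\times$ and $[\calO_\ell(\calI_i)^\times : \frako^\times] = w_i = u_K = [\frako_K^\times : \frako^\times]$, the sandwich forces $\calO_\ell(\calI_j')^\times = \frako_K^\times$, i.e. $w_j' = u_K$.

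The one point requiring a little care, and the main (minor) obstacle, is the bookkeeping that lets one arrange representatives so that the ``inclusion of left orders along the fiber map'' is literally an inclusion rather than merely a conjugacy: given $x_j' \mapsto x_i$ there is $b \in B^\times$ with $b x_j' \hat\calO^\times = x_i \hat\calO^\times$ as elements of $\Cl(\calO)$, and replacing $x_j'$ by $b x_j'$ (which does not change its class in $\Cl(\calO')$ only up to adjusting by $\hat\calO'^\times$—one must check this is harmless, which it is since left orders, unit indices, and embedding numbers are all invariants of the $\Cl(\calO')$-class) one may assume $x_j' = x_i$ in $\hat B^\times$, whence $\calO_\ell(\calI_j') = x_i\hat\calO' x_i^{-1} \cap B \subset x_i \hat\calO x_i^{-1} \cap B = \calO_\ell(\calI_i)$ on the nose. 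Once this normalization is in place, everything above is immediate from the definitions in \cref{sec:41} and the definition of balanced/(semi-)stable, with no further computation.
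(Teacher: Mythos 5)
Your proof is correct and follows essentially the same route as the paper's: decompose each $\Cl(\calO)$-class into the $\Cl(\calO')$-classes it contains, choose representatives so the left orders are nested (giving $w_j' \le w_i$), and note that the embedding numbers satisfy $h_i = \sum_j h_j'$ over each fiber, from which the balanced/semistable/stable inheritances follow (the paper leaves these last deductions implicit). One cosmetic remark: you cannot literally arrange $x_j' = x_i$, only $x_j' \in x_i \hat\calO^\times$ (and likewise $\frako_K$ sits in $\calO_\ell(\calI_j')$ only up to $B^\times$-conjugation), but the containment of left orders and the divisibility $u_K \mid w_j'$ --- which is all your argument uses --- still hold, so this does not affect the proof.
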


\begin{proof}
We can write $\Cl(\calO') = \{ y_{ij} \}$, where as double cosets in $\hat B^\times$
we have $x_i = \bigsqcup_j y_{ij}$ for all $1 \le i \le n$.  Then each
$\calO_\ell(y_{ij}) \subset \calO_\ell(x_i)$, so each $[ \calO_\ell(y_{ij})^\times : \frako^\times] \le w_i$.  In addition if $t \mapsto y(t)$ denotes the corresponding
class map $\Cl(K) \to \Cl(\calO')$, then we see $\sum_j h'_{ij} = h_i$,
where $h'_{ij} = \# \{ t \in \Cl(K) : y(t) = y_{ij} \}$.
\end{proof}

\subsection{Integral embeddings} \label{sec:emb}

Recall that a quadratic field extension $K/F$ embeds in $B$ if and only if
$K_v/F_v$ is non-split at each $v \in \Ram(B)$ and each $v | \infty$.  
Here we recall some
facts about embedding $\frako_K$ into a quaternionic order $\calO$.

Consider an $\frako$-order $\calO \subset B$.
Let $\Emb(\frako_K, \calO)$ denote the set of embeddings of 
$\frako_K$ into $\calO$ up to conjugation by $\calO^\times$.  Similarly,
define $\Emb(\frako_{K,v}, \calO_v)$ for $v < \infty$.
We have the following formula (see \cite{brzezinski:class} or \cite[Theorem 30.7.3]{voight:book}):
\begin{equation} \label{eq:emb}
\sum_{i=1}^n \# \Emb(\frako_K, \calO_\ell(\calI_i)) = h_K \prod_{v | \frakN}
\# \Emb(\frako_{K,v}, \calO_v). 
\end{equation}
On the left, $\calO_\ell(\calI_i)$ ranges over all isomorphism classes of orders
in the genus of $\calO$ (with multiplicity).  

Now suppose $\calO$ is special of level type $(\frakN_1, \frakN_2, \frakM)$.
In particular, we see that $\frako_K$
embeds into a special order in the genus of $\calO$
 if and only if $\frako_{K,v}$ locally embeds into $\calO_v$ for all $v | \frakN =
 \frakN_1 \frakN_2 \frakM$ and $K/F$ is totally imaginary.
 
Local embedding numbers have been computed for Eichler orders in
\cite{hijikata} and for special orders in \cite{HPS} (see also
\cite[Chapter 30]{voight:book}).  The precise description
is complicated in general, and we will only state it under
 the following assumptions:
 \begin{equation} \label{lev-ass}
 \ord_v(\frakN_2) = 2 \text{ for all } v | \frakN_2 \text{ and }
 \frakM \text{ is squarefree}.
 \end{equation}
We recall this implies that all special orders of level type
$(\frakN_1, \frakN_2, \frakM)$ lie in the same genus.

\begin{lemma} \label{lem:loc-emb}
Assume $\calO$ is a special order type
$(\frakN_1, \frakN_2, \frakM)$ such that \eqref{lev-ass} holds, 
and that $K/F$ is a quadratic field which embeds in $B$.

\begin{enumerate}
\item For $v | \frakN_1$, 
\[ \# \Emb(\frako_{K,v}, \calO_v) =
\begin{cases}
2 & K_v/F_v \text{ unramified}, \\
1 & K_v/F_v \text{ ramified}, \ord_v(\frakN_1) = 1, \\
0 & K_v/F_v \text{ ramified}, \ord_v(\frakN_1) > 1. \\
\end{cases} \]

\item For $v | \frakN_2$,
\[ \# \Emb(\frako_{K,v}, \calO_v) = 
\begin{cases}
0 & K_v/F_v \text{ unramified}, \\
q_v + 1 & K_v/F_v \text{ ramified}.
\end{cases} \]

\item For $v | \frakM$, 
\[ \# \Emb(\frako_{K,v}, \calO_v) =
\begin{cases}
0 & K_v/F_v \text{ unramified}, \\
1 & K_v/F_v \text{ ramified}, \\
2 & K_v/F_v \text{ split}.
\end{cases} \]
\end{enumerate}
\end{lemma}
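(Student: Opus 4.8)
The plan is to prove \cref{lem:loc-emb} as a purely local computation, treating the three cases $v \mid \frakN_1$, $v \mid \frakN_2$, $v \mid \frakM$ separately and citing the relevant embedding-number formulas. First I would set up the common framework: for a finite place $v$ of $F$ and an order $\calO_v \subset B_v$, the quantity $\# \Emb(\frako_{K,v}, \calO_v)$ is the number of $\calO_v^\times$-conjugacy classes of $\frako_v$-algebra embeddings $\frako_{K,v} \hookrightarrow \calO_v$. Throughout we already know $K/F$ embeds in $B$, which forces $K_v/F_v$ to be non-split exactly at the finitely many $v \in \Ram(B)$; in particular for $v \mid \frakN_1\frakN_2$ (where $B_v$ is division) $K_v/F_v$ is either unramified or ramified, never split, while for $v \mid \frakM$ (where $B_v$ is split) all three possibilities occur. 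Because the level assumption \eqref{lev-ass} forces all special orders of type $(\frakN_1,\frakN_2,\frakM)$ into one genus, it suffices to compute these numbers for one choice of $\calO_v$ of the prescribed local type.

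For the case $v \mid \frakM$: here $\calO_v$ is a local Eichler order of level $\ord_v(\frakM) = 1$ in $M_2(F_v)$ (squarefree $\frakM$). The embedding numbers for Eichler orders were computed by Hijikata \cite{hijikata} (see also \cite[Chapter 30]{voight:book}): for an Eichler order of level $\frakp_v^e$ in the split algebra, $\# \Emb(\frako_{K,v},\calO_v)$ equals $1 + \left( \frac{K_v}{\frakp_v}\right)$ when $e=1$ (with the Artin-symbol convention giving $+1$ if $K_v/F_v$ is split, $-1$ if inert, $0$ if ramified), which yields $2$, $0$, $1$ respectively --- exactly the table in (3). I would cite this directly rather than reprove it.

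For the cases $v \mid \frakN_1$ and $v \mid \frakN_2$: here $B_v$ is division and $\calO_v = \calO_{r}(E_v)$ is a special order with $r = \ord_v(\frakN)$, where $E_v$ is unramified when $v \mid \frakN_1$ (by our standing convention on special orders, since then $r$ is odd) and ramified when $v \mid \frakN_2$ (with $r = 2$ by \eqref{lev-ass}). The embedding numbers for these orders are precisely the content of \cite{HPS}; I would quote their formula (cf.\ \cite[Chapter 30]{voight:book}). The point is that an embedding $\frako_{K,v} \hookrightarrow \calO_{r}(E_v) = \frako_{E,v} + \frakP_v^{r-1}$ exists iff $\frako_{K,v}$ embeds in the maximal order $\calO_{B,v}$ (always true since $K_v/F_v$ is non-split) and the image is compatible with the suborder structure; when $r=1$ (the maximal order, $v \mid \frakN_1$ with $\ord_v(\frakN_1)=1$) one gets $2$ conjugacy classes in the unramified case and $1$ in the ramified case by the standard optimal-embedding count for the maximal order of a division algebra. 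When $\ord_v(\frakN_1) = r > 1$ is odd with $E_v$ unramified, an embedding of $\frako_{K,v}$ into $\frako_{E,v} + \frakP_v^{r-1}$ requires $\frako_{K,v} \subset \frako_{E,v}$ (up to conjugacy), which is impossible if $K_v/F_v$ is ramified since $\frako_{K,v}$ is not contained in the ring of integers of the unramified quadratic field --- giving the $0$ in (1) --- and still possible (in $2$ ways) if $K_v/F_v$ is unramified, i.e.\ $K_v \simeq E_v$. For $v \mid \frakN_2$ with $r=2$ and $E_v$ ramified, the count $\delta(q_v+1)$ with $\delta$ detecting $K_v/F_v$ ramified comes out of the \cite{HPS} formula for $\calO_2(E_v)$: the unramified $K_v$ cannot embed (its integers don't fit into a ramified-type order at this level), while a ramified $K_v$ embeds in $q_v+1$ conjugacy classes, reflecting the index growth $[\calO_{B,v}^\times : \calO_2(E_v)^\times]$.

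The main obstacle is not conceptual but bookkeeping: the embedding-number formulas in \cite{HPS} (and the translation in \cite{voight:book}) are stated with several case distinctions (parity of $r$, ramification of $E_v$ versus $K_v$, dyadic versus non-dyadic $v$), so the real work is to extract exactly the sub-cases relevant under hypothesis \eqref{lev-ass} and check the normalizations match. In particular I would be careful that the $q_v+1$ in (2) is correctly pinned down (as opposed to, say, $q_v-1$ or $1$) --- this is the one place where a sign or index could plausibly go wrong --- and I would double-check it against the global identity \eqref{eq:emb} in a known example, e.g.\ taking $\calO$ an Eichler order where both sides are classical, or cross-referencing \cite[Theorem 30.7.3]{voight:book}. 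I expect the whole proof to reduce to a paragraph per case, each ending in ``which is the asserted value,'' with the bulk of the citation burden on \cite{HPS} and \cite{hijikata}.
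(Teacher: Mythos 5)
Your proposal is correct and is essentially the paper's proof, which consists of exactly this citation argument: parts (1)--(2) are quoted from \cite{HPS} (Theorems 5.12 and 5.19) and part (3) from \cite{hijikata}*{Section 2} (see also \cite{voight:book}*{Lemma 30.6.16}), with the hypotheses \eqref{lev-ass} used only to pin down which sub-cases of those formulas apply. The one ingredient you leave implicit is \cref{lem:HPS}(v) (that $\calO_2(E_v) \simeq \calO_2(E_v')$ for any two ramified quadratic extensions), which the paper invokes so that the count $q_v+1$ at $v \mid \frakN_2$ holds for a ramified $K_v$ not necessarily isomorphic to the extension defining $\calO_v$.
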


\begin{proof}
The first two parts follow from \cite[Theorems 5.12 and 5.19]{HPS}
and \cref{lem:HPS}(v).
The third part follows from \cite[Section 2]{hijikata}
(see also \cite[Lemma 30.6.16]{voight:book}).
\end{proof}

Now \eqref{eq:emb} and \cref{lem:loc-emb} immediately yield the following.

\begin{cor} \label{Kconds} Let $\frakN = \frakN_1 \frakN_2 \frakM$ be a nonzero ideal in
$\frako$ such that
$\frakN_1$, $\frakN_2$, $\frakM$ are pairwise coprime, $\# \{ v | \frakN_1 \frakN_2 \} +
[F:\Q]$ is even, and $\ord_v(\frakN_1)$ is odd
for $v | \frakN_1$.  Further assume \eqref{lev-ass}.  Let $K/F$ be a quadratic field
extension.  Then $\frako_K$ embeds in some special order $\calO \subset B$ 
of level type
$(\frakN_1, \frakN_2, \frakM)$ if and only if the following conditions holds:

\begin{enumerate}[(i)]
\item $K/F$ is totally imaginary;

\item $K_v/F_v$ is non-split for each $v | \frakN_1 \frakN_2$;

\item $K_v/F_v$ is unramified for each $v | \frakN_1$ such that $\ord_v(\frakN_1) > 1$;

\item $K_v/F_v$ is ramified for each $v | \frakN_2$;

\item $K_v/F_v$ is either ramified or split for each $v | \frakM$.
\end{enumerate}
\end{cor}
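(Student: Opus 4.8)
The plan is to derive \Cref{Kconds} as a direct consequence of the embedding number identity \eqref{eq:emb} together with the local embedding computations in \Cref{lem:loc-emb}. The key observation is that the left-hand side of \eqref{eq:emb} is a sum of nonnegative integers, one for each isomorphism class of order in the genus of $\calO$ (counted with multiplicity by ideal classes), and it is positive --- i.e., $\frako_K$ embeds into \emph{some} order in the genus --- precisely when the right-hand side is positive. Since the right-hand side is $h_K \prod_{v | \frakN} \# \Emb(\frako_{K,v}, \calO_v)$ with $h_K \ge 1$, it is positive if and only if every local embedding number $\# \Emb(\frako_{K,v}, \calO_v)$ is positive. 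So the whole statement reduces to reading off, place by place, when the local embedding numbers in \Cref{lem:loc-emb} are nonzero, plus the standard global requirement that $K/F$ embed into $B$ at all.

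First I would recall (from the start of \Cref{sec:emb}) that $K/F$ embeds into $B$ if and only if $K_v/F_v$ is non-split at every $v \in \Ram(B)$ and every archimedean $v$; here $\Ram(B)$ consists of the finite places dividing $\frakN_1 \frakN_2$ together with all archimedean places (since $B$ is definite), and $B$ is genuinely ramified at the right set of finite places because $\# \{ v | \frakN_1\frakN_2\} + [F:\Q]$ is even. This gives condition (i) (total imaginarity, from the archimedean places) and the ``non-split at $v | \frakN_1 \frakN_2$'' part of condition (ii). Next I would invoke \Cref{lem:loc-emb}: at $v | \frakN_1$, the embedding number is nonzero exactly when $K_v/F_v$ is unramified, or ramified with $\ord_v(\frakN_1) = 1$ --- equivalently, $K_v/F_v$ is unramified whenever $\ord_v(\frakN_1) > 1$, which is condition (iii), with no further constraint when $\ord_v(\frakN_1)=1$ beyond non-split (already in (ii)). At $v | \frakN_2$ the embedding number is $\delta(q_v+1)$, nonzero exactly when $K_v/F_v$ is ramified, giving condition (iv) (which also subsumes non-split at those places). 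At $v | \frakM$ the embedding number is nonzero exactly when $K_v/F_v$ is ramified or split, which is condition (v). Assembling these equivalences gives the claimed list of conditions (i)--(v).

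One bookkeeping point I would be careful about: \Cref{Kconds} asserts $\frako_K$ embeds into \emph{some} special order of the given level type, whereas \eqref{eq:emb} sums over orders in a fixed genus. This is exactly where the hypothesis \eqref{lev-ass} is used --- I would cite the remark (immediately after \Cref{lem:HPS}, reiterated just before \Cref{lem:loc-emb}) that under \eqref{lev-ass} all special orders of level type $(\frakN_1,\frakN_2,\frakM)$ lie in a single genus. Hence ``embeds into some order in the genus'' and ``embeds into some special order of this level type'' coincide, and the positivity-of-the-sum argument applies verbatim.

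I do not expect a serious obstacle here: the proof is essentially the one-line reduction via \eqref{eq:emb} followed by transcribing \Cref{lem:loc-emb}. The only mild subtlety --- and the thing I would state most carefully --- is the logical direction of \eqref{eq:emb}: it gives an equality of totals, so a single embedding into \emph{one} order in the genus forces the product of local embedding numbers to be positive, and conversely positivity of that product forces the total (hence at least one summand) to be positive. Everything else is immediate from the lemma.
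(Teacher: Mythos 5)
Your proposal is correct and follows essentially the same route as the paper, which deduces \cref{Kconds} directly from \eqref{eq:emb} together with \cref{lem:loc-emb} (plus the global criterion for $K$ to embed in $B$ and the single-genus remark under \eqref{lev-ass}). Your added care about the positivity/genus bookkeeping is exactly the implicit content of the paper's one-line derivation.
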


When $(\frakN_1, \frakN_2, \frakM)$ satisfies the hypotheses
in the first 2 sentences of \cref{Kconds}, we will
say $(\frakN_1, \frakN_2, \frakM)$ is a \emph{nice level type}.
When all conditions in this corollary hold (i.e., $\frako_K$ embeds in a
special order of nice level type $(\frakN_1, \frakN_2, \frakM)$) and
the further condition that \emph{(v')} $K_v/F_v$ is split for each $v | \frakM$
also holds, we will say $K/F$ is 
$(\frakN_1, \frakN_2, \frakM)$-\emph{admissible}.
This extra condition  \emph{(v')} will be needed in \cref{sec:54}.

\subsection{Balanced orders} \label{sec:bal}
We can rephase the notion of balanced orders in terms of the existence of certain embeddings.

\begin{lemma} \label{lem:bal}
Let $\eps_1, \dots, \eps_r$ be a set of generators for $\frako_+^\times$.
Then an arbitrary order $\calO$ is balanced if and only if no ring of the form
$\frako[u]$ embeds the genus of $\calO$, where either $u$ is a root of unity of order
$\ge 3$ or $u = \sqrt{-\eps}$ where $\eps = \prod_S \eps_i$ and 
$\emptyset \ne S \subset \{ 1, \dots, r \}$.
\end{lemma}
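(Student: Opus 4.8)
The plan is to unwind the definition of ``balanced'' through the orbit-counting formula \eqref{eq:emb} and the theory of optional/optimal embeddings. Recall $\calO$ is balanced iff $w_i = [\calO_\ell(\calI_i)^\times : \frako^\times] = 1$ for every $i$, i.e., iff every maximal-order-in-the-genus contributing to $\Cl(\calO)$ has unit group equal to $\frako^\times$ (the minimal possible). So $\calO$ is \emph{not} balanced iff some $\calO_\ell(\calI_i)^\times$ strictly contains $\frako^\times$; since $B$ is definite, $\calO_\ell(\calI_i)^\times / \frako^\times$ is a finite group, and an element $u \in \calO_\ell(\calI_i)^\times \setminus \frako^\times$ generates a quadratic order $\frako[u] \subset \calO_\ell(\calI_i)$ with $\frako[u]^\times \supsetneq \frako^\times$. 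The first step is therefore purely algebraic: classify which quadratic $\frako$-orders $\frako[u]$ in a quadratic field $K/F$ (with $K$ totally imaginary, since $B$ is definite and $u$ must be torsion-or-``small'' in each archimedean component) can have unit group strictly larger than $\frako^\times$. By Dirichlet's unit theorem applied to $\frako[u] \subset \frako_K$ with $K$ totally imaginary and $F$ totally real of the same unit rank, the finite index $[\frako_K^\times : \frako[u]^\times]$ and $[\frako_K^\times : \frako^\times]$ are constrained: one shows $\frako[u]^\times \supsetneq \frako^\times$ forces $K/F$ to contain extra roots of unity or a totally imaginary unit, leading to the two listed cases — $u$ a root of unity of order $\ge 3$, or $u = \sqrt{-\eps}$ with $\eps$ a (nontrivial squarefree-in-the-generators) product of the $\eps_i$.

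Second, I would translate ``$\frako[u] \subset \calO_\ell(\calI_i)$ for some $i$'' into ``$\frako[u]$ embeds in the genus of $\calO$.'' This is exactly the content of \eqref{eq:emb}: the left-hand side $\sum_i \# \Emb(\frako[u], \calO_\ell(\calI_i))$ runs over all isomorphism classes of orders in the genus (with multiplicity), and it is positive iff all the local embedding numbers $\#\Emb(\frako[u]_v, \calO_v)$ are positive, i.e., iff $\frako[u]$ embeds locally everywhere (and $K = \operatorname{Frac}(\frako[u])$ embeds in $B$, handled by the non-split condition at $\Ram(B)$ and $\infty$). So: some $w_i > 1$ $\iff$ some $\calO_\ell(\calI_i)^\times \supsetneq \frako^\times$ $\iff$ some $\frako[u]$ of one of the two special shapes has $\sum_i \#\Emb(\frako[u], \calO_\ell(\calI_i)) > 0$ $\iff$ some such $\frako[u]$ embeds in the genus of $\calO$. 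Negating gives the lemma.

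The one point requiring genuine care — and the main obstacle — is the passage between ``$\frako[u] \subset \calO_\ell(\calI_i)$'' and ``$\calO_\ell(\calI_i)^\times \supsetneq \frako^\times$,'' in both directions. The forward direction (an extra unit $u$ generates an appropriate $\frako[u]$) needs the unit-theorem analysis above to guarantee $\frako[u]$ has one of the two prescribed forms after possibly replacing $u$ by an associate or a power — one must be careful that e.g.\ a unit of infinite order over $\frako$ cannot occur because $B$ is definite (every $\calO_\ell(\calI_i)^\times$ is finite mod $\frako^\times$), and that the ``$\sqrt{-\eps}$'' normal form genuinely captures all non-cyclotomic cases. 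The reverse direction needs that $\frako[u]^\times$ really is strictly bigger than $\frako^\times$ for every $u$ on the list — clear for roots of unity of order $\ge 3$, and for $u = \sqrt{-\eps}$ one checks $u$ itself is a unit of $\frako[u]$ not lying in $\frako^\times$ (here $\eps \ne 1$ is essential, which is why $S \ne \emptyset$). Finally one invokes \cref{lem:HPS}-type local data only implicitly: the genus is determined locally, so ``embeds in the genus'' is the correct intrinsic phrasing and matches the left side of \eqref{eq:emb}. I would keep the write-up short, citing \eqref{eq:emb} and Dirichlet's unit theorem, and not belabor the (standard) classification of which quadratic orders over a totally real base have extra units.
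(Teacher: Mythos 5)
Your proposal is correct and follows essentially the same route as the paper: reduce ``unbalanced'' to the existence of a unit $u \in \calO_\ell(\calI_i)^\times \setminus \frako^\times$, note $K=F(u)$ is CM since $B$ is definite, and normalize $u$ to a root of unity of order $\ge 3$ or to $\sqrt{-\eps}$, the converse being immediate. The one step you defer as ``standard'' is exactly what the paper supplies via Hasse's unit index $Q_{K/F}=[\frako_K^\times:\mu_K\frako^\times]\in\{1,2\}$ (equivalently, $u^2\in\mu_K\frako_+^\times$ for any CM unit $u$), and your detour through \eqref{eq:emb} is unnecessary since the left orders $\calO_\ell(\calI_i)$ already represent every isomorphism class in the genus.
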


\begin{proof}
Clearly $\calO$ is unbalanced if and only if there exist an order $\calO' = \calO_\ell(\calI_i)$
in the genus of $\calO$ such that $(\calO')^\times$ contains a unit 
$u \not \in \frako^\times$.  Suppose this is the case.  Then 
$K=F(u)$ is a CM extension in $B/F$.  Let $\mu_K$ denote the roots of unity of $K$.
Hasse's unit index $Q_{K/F} = [ \frako_K^\times: \mu_K \frako^\times ]$ is either 1 or
2.  If $Q_{K/F} = 1$, then $u \in \mu_K$.  Assume $Q_{K/F}=2$.  
Then $u^2 \in \zeta \frako^\times$ for some $\zeta \in \mu_K$.  Hence $\frako[\zeta]$
embeds in $\calO'$, and we may restrict to the case $\zeta = \pm 1$.  Then
$u^2 = -\eps_+$ for some $\eps_+ \in \frako_+^\times$.  
Write $\eps_+ = \prod \eps_i^{d_i}$.  Then, for an appropriate choice of
$\eta \in \frako_+^\times$, one sees $\eps := \eps_+ \eta^2$ is of the
form $\prod_S \eps_i$ for some $\emptyset \ne S \subset \{ 1, \dots, r \}$
By replacing $u$ with $u' = \sqrt{-\eps}$,
we see that  $\frako[u] = \frako[u'] = \frako[\sqrt{-\eps}]$.
This proves the ``if'' direction; the other direction is clear.
\end{proof}

\cref{lem:bal} implies that we can guarantee an order $\calO$ is balanced
by checking a finite number of local conditions, which depend upon $F$.  
Namely, there are finitely many CM extensions $F(u)$ of $F$, with $u$ as in 
\cref{lem:bal}: either $u = \zeta_m$ is a primitive $m$-th root of unity for some 
$m \ge 3$ such that the totally real subfield of $\Q(\zeta_m)$ is contained in $F$, or
$u = \sqrt{-\eps}$ for one of the finitely many $\eps \in \frako^\times_+$ 
of the form above.
For each such $u$, $\frako[u]$ does not embed in the genus of $\calO$ if either
(i) $B/F$ is ramified at some prime where $F(u)/F$ splits, or 
(ii) if $\frako[u] = \frako_K$ where $K=F(u)$ and $\calO$ is a special order
of nice level type $(\frakN_1, \frakN_2, \frakM)$ but
$K$ does not satisfy the conditions of \cref{Kconds}.  
(When $\frako[u] \ne \frako_K$,
one can also use results about optimal embeddings to get similar conditions.)
In particular, if (i) is satisfied for all such $u$, then every order $\calO \subset B$ is
balanced.

We explicate our criteria in the case of $F=\Q$. 

\begin{cor} \label{bal-crit}
Suppose $F=\Q$ and $\calO \subset B$  is a special order of nice level type
$(N_1, N_2, M)$.  Then $\calO$ is balanced if and only if both (1) and (2) below hold.

\begin{enumerate}
\item One of the following holds:

\begin{enumerate}[(a)]
\item there exists a prime $p \equiv 1 \mod 4$ such that $p | N_1$; or

\item there exists a prime $p \equiv 3 \mod 4$ such that $p | M$; or

\item $N_2 \not \in \{ 1, 4 \}$; or

\item $8 | N_1$.
\end{enumerate}

\item And one of the following holds:

\begin{enumerate}[(a)]
\item there exists a prime $p \equiv 1 \mod 3$ such that $p | N_1$; or

\item there exists a prime $p \equiv 2 \mod 3$ such that $p | M$; or

\item $N_2 \not \in \{ 1, 9 \}$; or

\item $27 | N_1$.
\end{enumerate}
\end{enumerate}
\end{cor}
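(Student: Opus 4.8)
The plan is to apply \cref{lem:bal} directly with $F = \Q$, where the totally real field has $\frako_+^\times = \{1\}$, so the only relevant element of the form $\sqrt{-\eps}$ is $u = i = \sqrt{-1}$, and the only roots of unity of order $\ge 3$ that generate a CM extension of $\Q$ inside $B$ are $\zeta_3$ (primitive cube root, with $\Q(\zeta_3) = \Q(\sqrt{-3})$) and $\zeta_4 = i$ and $\zeta_6$ (but $\Q(\zeta_6) = \Q(\zeta_3)$, so this adds nothing new). Thus $\calO$ is balanced if and only if \emph{neither} $\frako[\,i\,] = \Z[i] = \frako_{\Q(\sqrt{-1})}$ \emph{nor} $\frako[\zeta_3] = \Z[\zeta_3] = \frako_{\Q(\sqrt{-3})}$ embeds into the genus of $\calO$. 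Since $\Z[i]$ and $\Z[\zeta_3]$ are maximal orders, each embeds into the genus of the special order $\calO$ of level type $(N_1, N_2, M)$ satisfying \eqref{lev-ass} if and only if $K = \Q(\sqrt{-1})$ (respectively $K = \Q(\sqrt{-3})$) satisfies conditions (i)--(v) of \cref{Kconds}.

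The next step is to translate the failure of conditions (i)--(v) of \cref{Kconds} for these two specific fields into the congruence conditions in the statement. Condition (i) (totally imaginary) is automatic for both. For $K = \Q(i)$ with discriminant $D_K = -4$: the rational prime $2$ is ramified, odd primes $p \equiv 1 \bmod 4$ are split, and odd primes $p \equiv 3 \bmod 4$ are inert. So $K = \Q(i)$ \emph{fails} to satisfy \cref{Kconds} exactly when one of: some $p \equiv 1 \bmod 4$ divides $N_1 N_2$ (violating (ii))—but $p | N_2$ would force $p$ ramified by (iv), impossible for $p$ split, so really $p | N_1$; or some $p \equiv 3 \bmod 4$ divides $M$ (violating (v), since such $p$ is inert, hence neither ramified nor split in $M$); or $2 | N_1$ with $\ord_2(N_1) > 1$, i.e. $8 | N_1$ (violating (iii), since $2$ is ramified but must be unramified when $\ord_2 > 1$); or $2 | N_2$ with $2$ unramified—impossible. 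One also checks $2 | N_2$ (so $\ord_2(N_2) = 2$, $N_2 \in \{4, \dots\}$) is actually \emph{fine} for $\Q(i)$ since $2$ ramifies, matching (iv)—so $N_2 = 4$ does not by itself block the embedding, which is why $N_2 \notin \{1,4\}$ appears rather than $N_2 \neq 1$. Assembling these: $\Z[i]$ does \emph{not} embed in the genus $\iff$ condition (1)(a)--(d) of the corollary holds. The analogous bookkeeping for $K = \Q(\sqrt{-3})$, $D_K = -3$—where $3$ is ramified, $p \equiv 1 \bmod 3$ are split, $p \equiv 2 \bmod 3$ are inert—yields that $\Z[\zeta_3]$ does \emph{not} embed in the genus $\iff$ condition (2)(a)--(d) holds, with $N_2 \notin \{1,9\}$ and $27 | N_1$ playing the roles of the $2$-adic conditions above.

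Combining the two cases via \cref{lem:bal}: $\calO$ is balanced $\iff$ ($\Z[i]$ does not embed in the genus) \emph{and} ($\Z[\zeta_3]$ does not embed in the genus) $\iff$ (1) holds \emph{and} (2) holds. This is exactly the claimed equivalence, completing the proof.

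I expect the main obstacle to be the careful case analysis at the prime $2$ (for $\Q(i)$) and the prime $3$ (for $\Q(\sqrt{-3})$), specifically disentangling which of $N_1, N_2$ a ramified prime is allowed to divide and to what power—this is where the slightly asymmetric-looking conditions ``$N_2 \notin \{1,4\}$ or $8 | N_1$'' come from, and one must be careful that $N_2 = 4$ (resp.\ $N_2 = 9$) genuinely permits the embedding so that it correctly fails to make $\calO$ unbalanced. A minor additional point to verify is that no root of unity of order $\ge 3$ other than $\zeta_3, \zeta_4, \zeta_6$ can generate a CM subfield of a quaternion algebra over $\Q$ (since $[\Q(\zeta_m):\Q] = \varphi(m) \le 2$ forces $m \in \{3,4,6\}$ among $m \ge 3$), so \cref{lem:bal} really only involves these two fields.
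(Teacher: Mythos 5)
Your proposal is correct and follows the paper's own (very terse) proof exactly: invoke \cref{lem:bal} over $\Q$ to reduce to whether $\Z[i]$ or $\Z[\zeta_3]$ embeds in the genus, then translate via \cref{Kconds}; your spelled-out case analysis at $2$, $3$, and the primes dividing $N_1$, $N_2$, $M$ (including why $N_2=4$, resp.\ $N_2=9$, permits the embedding) is the correct bookkeeping that the paper leaves implicit.
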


\begin{proof}
By \cref{lem:bal}, $\calO$ is balanced if and only if neither $\Z[i]$ nor
$\Z[\zeta_3]$ embed in the genus of $\calO$.  Now apply \cref{Kconds}.
\end{proof}

In particular, in the setting of this corollary, we see that $\calO$ is balanced if
$N_2 > 9$, or if there exists a $p \equiv 1 \mod 12$
dividing $N_1$, or if there exists a $p \equiv 11 \mod 12$ dividing $M$.

Alternatively, one can treat special orders over $\Q$ 
by comparing the mass formula with the class number formula 
from \cite[Theorem 8.6]{HPS}.

Now we briefly discuss some sufficient conditions for balanced orders $\calO$
when $F=\Q(\sqrt d)$ is real quadratic.  
Assume $d > 1$ is squarefree
and let $\eps$ be a generator for the rank 1 group $\frako_+^\times$.  
Then, assuming $\frako[\eps]$ does not
embed into the genus of $\calO$, \cref{lem:bal} tells us that
$\calO$ is balanced if and only if
no cyclotomic ring $\Z[\zeta_m]$ with $m \ge 3$ embeds in the genus
of $\calO$.  

If $\Z[\zeta_m]$ embeds in the genus of $\calO$, then
$K = K_m := F(\zeta_m)$ embeds in $B$.  The only possibilities
are $K=F \Q(i)$, $K = F\Q(\zeta_3)$ or that $K = \Q(\zeta_m)$ is
cyclotomic of degree 4 containing $F$.  The latter possibility implies
that $m=8$ and $d=2$, $m=12$ and $d=3$, or $m=d=5$.  In these 3 cases,
$\Z[\zeta_m]$ embeds in the genus of $\calO$ if and only if $\frako_K$
does, so we can apply \cref{Kconds}.
When $m = 3, 4$, i.e., $K = F\Q(i)$ or $K=F\Q(\zeta_3)$, it is not
necessarily true that $\frako_K = \frako_F[\zeta_m]$.  
However, if $d > 5$ and $d \equiv 1 \mod 4$, 
then $\Q(i)$, $\Q(\zeta_3)$ and $F$ have pairwise coprime discriminants,
implying that they are pairwise linearly independent, and thus
$\frako_K = \frako_F[\zeta_m]$ for $m=3,4$.  In this situation,
we can again use \cref{Kconds}.

\begin{ex} Let $F=\Q(\sqrt 5)$.  Then $\eps = \frac{\sqrt 5-1}2$ generates
$\frako_+^\times$.  
Let $\frakp_{11} = (\frac{1\pm 3 \sqrt 5}2)$ be a prime of $F$ above $11$.
Both $F(\eps)$ and $\Q(\zeta_5)$ split over $\frakp_{11}$, whereas
both $F\Q(i)$ and $F\Q(\zeta_3)$
split over the prime ideal $(7)$.  Hence if $B$ is any definite quaternion algebra 
ramified at both $(7)$ and $\frakp_{11}$, then any order 
$\calO \subset B$ is balanced. 

Alternatively, $F(\eps)$ and $F\Q(i)$ have rings of integers of the form
$\frako[u]$ where $u=\eps$ or $u = i$, and both are ramified above $(2)$.
Also $F\Q(\zeta_3)/F$ is split above $(2)$.  Hence if $B$ is any definite
quaternion algebra ramified at $(2)$ and also at some prime $\frakp$ lying above
a prime $p$ of $\Q$ that completely splits $\Q(\zeta_5)$ (i.e., $p \equiv 1 \mod 5$),
then any special order $\calO \subset B$ of level type 
$(\frakN_1, \frakN_2, \frakM)$ is balanced if $8 | \frakN_1$.
\end{ex}

\section{Twisted $L$-values} \label{sec:Lvals}

From now on, we assume that $h_F = 1$, $K/F$ is
quadratic, $\frakN = \frakN_1 \frakN_2 \frakM$ 
is a nonzero integral $\frako$-ideal, and that
 $\frakN_1, \frakN_2, \frakM$ and $K/F$ satisfy the conditions in \cref{Kconds}.
 In particular, $(\frakN_1, \frakN_2, \frakM)$ is a nice level type.
From \cref{sec:54} onwards, we will further assume that $K_v/F_v$ is split for 
each $v | \frakM$, i.e., that $K/F$ is $(\frakN_1, \frakN_2, \frakM)$-admissible.

Let $B/F$ be the definite quaternion algebra with discriminant
$\frakD_B = \prod_{\frakp | \frakN_1 \frakN_2} \frakp$.  
Then there exists a special order $\calO \subset B$ of level type 
$(\frakN_1, \frakN_2, \frakM)$ and an embedding of $K$ into $B$
such that $\frako_K \subset \calO$.  We fix such an $\calO$ and such
an embedding,
which defines a class map $\Cl(K) \to \Cl(\calO)$, periods $P_{K,\chi}$,
divisors $c_{K,\chi}$, and ideal class embedding
numbers $h_1, \dots, h_n$ as in the previous section.

For an (irreducible) cuspidal representation $\pi$ of 
$B^\times(\A_F)$ or $\GL_2(\A_F)$, denote by $c(\pi)$ the conductor of $\pi$.
This means that $c(\pi)$ is the ideal in $\frako_F$ corresponding to the exact level
of a newform $\phi \in \pi$.  In particular, $c(\pi)$ is the conductor of the 
Jacquet--Langlands transfer of $\pi$ to $\GL_2(\A_F)$ in the case of 
$B^\times(\A_F)$.  For $v < \infty$, the local
conductor is $c(\pi_v) := \ord_v(c(\pi))$.

\subsection{Local vanishing criteria}

Let $\phi \in M(\calO)$ be a eigenform and 
$\pi$ the associated representation of $B^\times(\A_F)$
(necessarily of trivial central character since $h_F = 1$).
Let $\chi$ be a character of $\Cl(K)$.  It is easy to see that
$P_{K,\chi}(\phi) = 0$ unless the restriction of $\chi$ to $\A_F^\times$
is trivial, so let us assume this.  

Our definition \eqref{per-def}
of the period $P_{K,\chi}$ can also be expressed as an integral
\[ P_{K,\chi}(\phi) = \int_{K^\times \A_F^\times \bs \A_K^\times} \phi(t) \chi^{-1}(t) \, dt, \]
for a suitable choice of Haar measure on $\A_K^\times$.  Via this integral,
$P_{K,\chi}$ extends to a $\chi$-equivariant linear functional on $\pi$.
In other words, $P_{K,\chi}$ is an element of $\Hom_{\A_K^\times}(\pi,\chi)$.
Thus the following gives sufficient criteria for the vanishing of
$P_{K,\chi}(\phi)$, where $\phi \in \pi$.

Denote by $\St_v$ the Steinberg representation of $\GL_2(F_v)$ for $v < \infty$.

\begin{prop} \label{pichi-adm}
Let $\pi$ be a cuspidal representation of $B^\times(\A_F)$
occurring in $S(\calO)$.  Let $\chi$ be a character of $\Cl(K)$
which is trivial on $\A_F^\times$.
The global Hom space  $\Hom_{\A_K^\times}(\pi,\chi)$ 
is nonzero if and only if both of the following hold for all $v | \frakN$ such that
$K_v/F_v$ is ramified:

\begin{enumerate}[(i)]
\item if $v | \frakN_1 \frakN_2$ and $\pi_v$ is $1$-dimensional, 
then $\pi_v(\varpi_{K,v}) = \chi_v(\varpi_{K,v})$;

\item if $v | \frakM$ and $\pi_v \simeq \St_v \otimes \mu_v$, then
$\mu_v(\varpi_v) = - \chi_v(\varpi_{K,v})$.
\end{enumerate}
\end{prop}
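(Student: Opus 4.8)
The plan is to reduce the global Hom-space question to a product of local Hom-space questions via the local-global principle for the Waldspurger/Tunnell–Saito period, and then to read off the local conditions at the ramified places from the explicit structure of $\pi_v$ and the known classification of when $\Hom_{K_v^\times}(\pi_v, \chi_v) \ne 0$.

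First I would invoke the local-global principle: for a cuspidal $\pi$ on $B^\times(\A_F)$ with trivial central character and a character $\chi$ of $\A_K^\times/\A_F^\times K^\times$, the space $\Hom_{\A_K^\times}(\pi, \chi)$ is nonzero if and only if it is nonzero locally at every place $v$, i.e.\ $\Hom_{K_v^\times}(\pi_v, \chi_v) \ne 0$ for all $v$, \emph{and} a global sign/epsilon-factor condition is satisfied selecting $B$ over the split quaternion algebra. Here the latter condition is automatic: by hypothesis $\pi$ already occurs on this particular $B$ (it appears in $S(\calO)$), and $K$ embeds into $B$, which is precisely the statement that the relevant $\epsilon$-factor condition picks out $B$; so $\Hom_{\A_K^\times}(\pi,\chi) \ne 0$ if and only if $\Hom_{K_v^\times}(\pi_v, \chi_v) \ne 0$ for every place $v$. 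It then remains to analyze each local Hom space.

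Next I would go through the places $v$ in cases. At $v \mid \infty$: since $B$ is definite and $\pi$ has trivial weight, $\pi_v$ is trivial (the trivial representation of the Hamilton quaternions), $K_v/F_v = \C/\R$, and $\chi_v$ is trivial on $F_v^\times$, so $\Hom_{\C^\times}(\mathbf 1, \chi_v) \ne 0$ automatically (indeed $\chi_v$ restricted to $\R^\times$ trivial forces $\chi_v$ on $\C^\times/\R^\times$, which still contains the trivial character; one checks $\chi_v$ must be trivial since $\pi_v$ is trivial — but in any case it is nonzero for the trivial $\pi_v$ iff $\chi_v = \mathbf 1$, which holds because $\chi$ has finite order coming from $\Cl(K)$). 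At $v \nmid \frakN$ with $K_v/F_v$ split or inert unramified: $\pi_v$ is an unramified principal series (or unramified of $B_v^\times$), $\chi_v$ is unramified, and $\Hom_{K_v^\times}(\pi_v,\chi_v) \ne 0$ holds automatically in the unramified situation (this is the standard fact that unramified local periods are nonvanishing). At $v \mid \frakN$ with $K_v/F_v$ split: $B_v$ is split when $v \mid \frakM$ and $\chi_v$ decomposes; by Tunnell–Saito the Hom space for $\GL_2$ with a split torus is nonzero for any generic $\pi_v$, and one checks the $1$-dimensional case doesn't arise under our hypotheses — so no constraint. At $v \mid \frakN$ with $K_v/F_v$ inert unramified: here I need the local Hom-space computation for $\pi_v$ ranging over the possible local types (unramified twist of Steinberg, supercuspidal, or $1$-dimensional on $B_v^\times$) against an \emph{unramified} $\chi_v$; in each of these the inert-unramified period is nonvanishing (for the $1$-dimensional representation $\mu_v \circ N$ on $B_v^\times$, $\Hom_{K_v^\times}(\mu_v\circ N, \chi_v) \ne 0$ iff $\chi_v|_{K_v^\times} = \mu_v \circ N_{K_v/F_v}$, and since both sides are unramified and agree on $F_v^\times$ by the trivial-restriction assumption, this holds) — so again no constraint. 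The real content is the remaining case: $v \mid \frakN$ with $K_v/F_v$ \emph{ramified}, which is exactly where conditions (i) and (ii) live.

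For the ramified places: when $v \mid \frakN_1\frakN_2$ (so $B_v$ is division) and $\pi_v$ is $1$-dimensional, $\pi_v = \mu_v \circ N_{B_v/F_v}$, and $\Hom_{K_v^\times}(\mu_v \circ N, \chi_v) \ne 0$ iff $\chi_v|_{K_v^\times} = \mu_v \circ N_{K_v/F_v}$; since both characters agree on $F_v^\times$ (here $N_{K_v/F_v}(\varpi_{K,v}) = \varpi_v$ up to a unit because $K_v/F_v$ is ramified, and $\mu_v \circ N_{B_v/F_v}$ evaluated on a uniformizer of $B_v$ equals $\mu_v(\varpi_v)$), the condition reduces to matching on a single ramified uniformizer $\varpi_{K,v}$, giving exactly (i). When $\pi_v$ is not $1$-dimensional at such $v$ (supercuspidal, corresponding to $\frakN_2$, or the image of a special representation), the local Tunnell–Saito dichotomy together with the fact that these $\pi_v$'s are precisely the ones coming from $B_v$ division shows the ramified period is nonvanishing with no further constraint — I would cite the explicit local computations in \cite{MW} or \cite{me:basis} here. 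When $v \mid \frakM$ (so $B_v$ is split) and $\pi_v \simeq \St_v \otimes \mu_v$: the Saito–Tunnell theorem for the Steinberg-type representation against a ramified character of a ramified torus says $\Hom_{K_v^\times}(\St_v \otimes \mu_v, \chi_v) \ne 0$ iff $\mu_v(\varpi_v) \ne \chi_v(\varpi_{K,v})$ would be the condition for the \emph{split} quaternion algebra, so for $B_v$ split we need $\mu_v(\varpi_v) = -\chi_v(\varpi_{K,v})$ — wait, I should double check the sign: the epsilon factor $\epsilon(1/2, \pi_v \otimes \chi_v)$ for $\pi_v$ Steinberg twisted and $\chi_v$ of a ramified quadratic extension is $-\mu_v(\varpi_v)\chi_v(\varpi_{K,v})$ or similar, and the $\GL_2$-side period is nonzero iff this is $+1$; at $v \mid \frakM$ we are on the \emph{split} side so we need the $+1$ condition, which after chasing normalizations is $\mu_v(\varpi_v) = -\chi_v(\varpi_{K,v})$, i.e.\ condition (ii).

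The main obstacle will be pinning down the exact local Hom-space / epsilon-factor bookkeeping at the ramified places — getting the signs and the uniformizer-normalizations right so that (i) and (ii) come out precisely as stated, and confirming that at ramified $v$ with $\pi_v$ \emph{not} $1$-dimensional (resp.\ not Steinberg-twisted) there is genuinely no extra constraint, which requires knowing the local types that actually occur in $S(\calO)$ for a special order (from \cite{me:basis}) match the ones for which the ramified-torus period on the division (resp.\ split) quaternion algebra is automatically nonzero. Everything else — the local-global reduction, the unramified places, the archimedean and split places — is standard and can be dispatched by citing the Tunnell–Saito dichotomy and the existing period computations in \cite{MW}.
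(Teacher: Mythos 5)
Your overall strategy is the same as the paper's: reduce to the local Hom spaces, invoke the Tunnell--Saito dichotomy at the places where both $K_v/F_v$ and $\pi_v$ are ramified, and extract conditions (i) and (ii) from the explicit description of the $1$-dimensional representation $\mu_v\circ N$ on the division algebra (resp.\ its Jacquet--Langlands partner $\St_v\otimes\mu_v$). One framing point first: there is no global sign condition to dispose of. The Hom space factors formally, $\Hom_{\A_K^\times}(\pi,\chi)\cong\bigotimes_v\Hom_{K_v^\times}(\pi_v,\chi_v)$, so ``global nonzero iff locally nonzero everywhere'' is immediate; the $\eps$-factor criterion of Tunnell--Saito is purely local (it decides which of $\pi_v,\pi_v'$ carries the local Hom space), and the genuinely global Waldspurger statement involves nonvanishing of $L(\frac12,\pi_K\otimes\chi)$ and concerns the period functional, not the abstract Hom space --- so your ``the global epsilon condition is automatic because $\pi$ lives on $B$'' is not the right justification, though the conclusion of the reduction is correct.

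The genuine gap is at $v\mid\frakN_1$ with $K_v/F_v$ inert. You assert that the local Hom space is automatically nonzero there for all local types, but this is false for a general representation of $B_v^\times$ against an unramified $\chi_v$: since $K_v^\times=F_v^\times\frako_{K,v}^\times$, such $\chi_v$ is trivial on $K_v^\times$, and by the dichotomy $\Hom_{K_v^\times}(\pi_{D,v},\chi_v)\ne 0$ exactly when $\eps(\frac12,\pi_{K,v}\otimes\chi_v)=-1$, which for a supercuspidal transfer comes down to $(-1)^{c(\pi_v)}=-1$; in particular for unramified dihedral supercuspidals of even conductor the Hom space on the division side \emph{vanishes}. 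So the nonvanishing at inert $v\mid\frakN_1$ is not a generic fact about ``inert-unramified periods'': it uses the hypothesis that $\pi$ occurs in $S(\calO)$ with $\frako_K\subset\calO$. The paper's argument is exactly this: $\pi_v^{\calO_v^\times}\ne 0$ and $\frako_{K,v}^\times\subset\calO_v^\times$ give an $\frako_{K,v}^\times$-fixed vector, hence (trivial central character) a vector on which $K_v^\times=F_v^\times\frako_{K,v}^\times$ acts by $\chi_v$, so $\Hom_{K_v^\times}(\pi_v,\chi_v)\ne 0$ with no condition. (Alternatively one can feed the odd-conductor constraint at $v\mid\frakN_1$ from \cite{me:basis} into an $\eps$-factor computation, but some such input is indispensable.) The same input is what forces $\mu_v$ to be unramified in your $1$-dimensional inert case, which you tacitly assumed when saying ``both sides are unramified.'' Once this place is handled, your case analysis at the ramified places, yielding (i) and (ii), agrees with the paper's proof.
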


\begin{proof}
The global Hom space  $\Hom_{\A_K^\times}(\pi,\chi)$ 
is nonzero if and only if each local Hom space
$\Hom_{K_v^\times}(\pi_v,\chi_v)$ is.  For $v | \infty$,
$\pi_v$ and $\chi_v$ are both trivial, so this local Hom space is clearly nonzero.
Assume from now on $v$ is finite.  
Necessary and sufficient criteria for each local Hom space to be nonzero
were given by Tunnell \cite{tunnell} and Saito \cite{saito}.  In particular, if either $\pi_v$ 
is an (unramified or ramified) principal series or $K_v/F_v$ is split, then necessarily
$B_v$ is split and
$\Hom_{K_v^\times}(\pi_v,\chi_v) \ne 0$ for all $\chi_v: K_v^\times/F_v^\times
\to \C^\times$.

In all other cases, $v | \frakN$ and either $\pi_v$ is a representation of the unit group
$D_v^\times$ of the local 
quaternion division algebra  or $\pi_v$ is a discrete series 
representation of $\GL_2(F_v)$.    In either case, let $\pi_v'$ be the 
Jacquet--Langlands correspondent on the other group.  Furthermore, 
$K_v/F_v$ is a field extension which embeds into $D_v$, and 
we have the dichotomy relation:
\[ \dim \Hom_{K_v^\times}(\pi_v,\chi_v) + 
\dim \Hom_{K_v^\times}(\pi'_v,\chi_v) = 1. \]
Let $\pi_{D,v}$ be the element of $\{ \pi_v, \pi'_v \}$ which is a 
representation of $D_v^\times$.  Then 
$\Hom_{K_v^\times}(\pi_{D,v},\chi_v) \ne 0$ if and only if
$\pi_{D,v} |_{K_v^\times}$ contains $\chi_v$ as a subrepresentation.  
We note that the Tunnell--Saito criterion says that this is the case if and only if
$\eps(\frac 12, \pi_{K,v} \otimes \chi_v) = -1$, though we do not directly
use this in our proof.

First suppose $K_v/F_v$ is unramified.  By our embedding conditions,
this means $v | \frakN_1$, so in particular $\pi_v = \pi_{D,v}$.
Then $K_v^\times = F_v^\times \frako_{K,v}^\times$, so $\chi_v$ occurs
in $\pi_{v}$ if and only if $\pi_{v}$ contains an $\frako_{K,v}^\times$-fixed vector.
In fact $\pi_v^{\calO_v^\times}$ contains an $\frako_{K,v}^\times$-fixed vector
since $\frako_{K,v} \subset \calO_v$.
Thus in this case we always have $\dim \Hom_{K_v^\times}(\pi_v,\chi_v) \ne 0$.

Now assume $K_v/F_v$ is ramified.  By our embedding criteria,
either $\pi_{D,v}$ is 1-dimensional or $v | \frakN_2$ and $\pi_v = \pi_{D,v}$ 
has dimension $> 1$. 
In addition, there are 2 possibilities for $\chi_v$ according to 
$\chi_v(\varpi_{K,v}) = \pm 1$, 
since $K_v^\times / \frako_{K,v}^\times F_v^\times
\simeq \langle \varpi_{K,v} \rangle / \langle \varpi_{K,v}^2 \rangle$.

If $v | \frakN_2$ and $\pi_v = \pi_{D,v}$ has dimension $> 1$,
then $\pi'_v$ is minimal supercuspidal representation of even conductor
(in fact depth 0), so by \cite[Theorem 3.6]{me:basis} 
$\pi_v|_{K_v^\times}$ contains both unramified characters of  $K_v^\times/F_v^\times$.
In particular, $\Hom_{K_v^\times}(\pi_{v},\chi_v) \ne 0$.

Finally we are reduced to the case that $\pi_{D,v} = \mu_v \circ N$ is 1-dimensional,
so $\pi_v' \simeq \St_v \otimes \mu_v$.  It is clear that 
$\Hom_{K_v^\times}(\pi_{D,v},\chi_v) \ne 0$ if and
only if $\mu_v(\varpi_v) = \chi_v(\varpi_{K,v})$.
The proposition now follows from the dichotomy relation by observing that
$\pi_v = \pi_{D,v}$ if $v | \frakN_1 \frakN_2$ and $\pi_v' = \pi_{D,v}$ if $v | \frakM$.
\end{proof}

\subsection{Local $L$-factors}
\label{sec:loc-lfac}
In this section, we describe local $L$-factors in cases that are relevant for us.
Here $v$ denotes a finite place of $F$, $K_v/F_v$ is a quadratic field
extension, $\pi_v$ is a representation of
$\PGL_2(F_v)$, and $\chi_v$ an unramified character of $K_v^\times/F_v^\times$.
We say a supercuspidal $\pi_v$ is unramified dihedral if it is induced from
a regular character of the unramified quadratic extension of $F_v$.  
A supercuspidal $\pi_v$ is unramified dihedral if and only if $\pi_v$ is minimal
of even conductor \cite[Proposition 3.5]{tunnell:1978}.

\begin{lemma} \label{loc-lfac}
We have the following local $L$-factor ratios.
\begin{enumerate}

\item Suppose $\pi_v$ is supercuspidal.  Then
\[ \frac{L(\frac 12, \pi_{K,v} \otimes \chi_v)}{L(1, \pi_v, \Ad)} = 
\begin{cases}
1+q_v^{-1} & \pi_v \text{ unramified dihedral} \\
1 & \pi_v \text{ else}.
\end{cases} \]

\item Suppose $K_v/F_v$ is ramified and $\pi_v \simeq
\St_v \otimes \mu_v$ is a twisted Steinberg representation, where
$\mu_v$ is a quadratic character of $F_v^\times$ such that $\mu_v \circ N_{K_v/F_v}$
is an unramified character of $K_v^\times$ and
$\mu_v(\varpi_{F,v}) = \chi_v(\varpi_{K,v})$.
Then
\[ \frac{L(\frac 12, \pi_{K,v} \otimes \chi_v)}{L(1, \pi_v, \Ad)} = 
 1+q_v^{-1}. \]
\end{enumerate}

\end{lemma}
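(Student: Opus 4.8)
The plan is to compute both $L$-factors from the Weil--Deligne parameters via local Langlands, using inductivity of local $L$-factors under base change together with the structure of the adjoint parameter.

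For part (i), I would first show the numerator is trivial, $L(\tfrac12,\pi_{K,v}\otimes\chi_v)=1$. Since $\chi_v$ is unramified it is $\mathrm{Gal}(K_v/F_v)$-invariant, hence $\chi_v=\mu_v\circ N_{K_v/F_v}$ for a character $\mu_v$ of $F_v^\times$, which is quadratic because $\chi_v|_{F_v^\times}=1$; equivalently $\chi_v=\mu_v|_{W_{K_v}}$, so $\mathrm{Ind}_{W_{K_v}}^{W_{F_v}}\chi_v\simeq\mu_v\oplus\mu_v\eta_{K_v/F_v}$. Writing $\phi_v$ for the parameter of $\pi_v$, inductivity and the projection formula give that the numerator equals $L(s,\phi_v\otimes\mu_v)\,L(s,\phi_v\otimes\mu_v\eta_{K_v/F_v})$, and each factor is $1$ since $\phi_v$ twisted by a character is an irreducible $2$-dimensional representation of $W_{F_v}$ ($\pi_v$ being supercuspidal). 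Hence the ratio is $L(1,\pi_v,\Ad)^{-1}$, and it remains to show this equals $(1+q_v^{-1})^{-1}$ exactly when $\pi_v$ is unramified dihedral and $1$ otherwise. For this I would use $L(s,\pi_v,\Ad)=L(s,\Ad\circ\phi_v)$ (from $L(s,\pi_v\times\pi_v^\vee)=\zeta_{F_v}(s)L(s,\pi_v,\Ad)$ and $\phi_v\otimes\phi_v^\vee=\mathbf 1\oplus\Ad\circ\phi_v$), and the fact that the constituents of $\Ad\circ\phi_v$ are precisely the nontrivial quadratic characters $\eta$ of $F_v^\times$ with $\pi_v\otimes\eta\simeq\pi_v$, together with irreducible (hence ramified, hence $L$-trivial) $2$-dimensional induced pieces. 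As $F_v^\times$ has a unique unramified quadratic character, and it is one of these $\eta$ precisely when $\pi_v$ is dihedral with respect to the unramified quadratic extension of $F_v$ (i.e.\ unramified dihedral, cf.\ \cite[Proposition 3.5]{tunnell:1978}), the claim follows, $L(1,\pi_v,\Ad)$ being a product of $L$-factors of quadratic characters of which at most one is nontrivial.

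For part (ii), the adjoint $L$-factor is insensitive to character twists, so $L(1,\pi_v,\Ad)=L(1,\St_v,\Ad)=\zeta_{F_v}(2)=(1-q_v^{-2})^{-1}$. For the numerator, base change sends $\St_v\otimes\mu_v$ to $\St_{K_v}\otimes(\mu_v\circ N_{K_v/F_v})$, so $\pi_{K,v}\otimes\chi_v\simeq\St_{K_v}\otimes\nu$ with $\nu=(\mu_v\circ N_{K_v/F_v})\chi_v$, unramified by hypothesis. Evaluating on a uniformizer and using that $N_{K_v/F_v}(\varpi_{K,v})$ differs from $\varpi_{F,v}$ by a unit (as $K_v/F_v$ is ramified), together with the hypotheses that $\mu_v\circ N_{K_v/F_v}$ is unramified and $\mu_v(\varpi_{F,v})=\chi_v(\varpi_{K,v})$, one gets $\nu(\varpi_{K,v})=\chi_v(\varpi_{K,v})^2=1$, so $\nu$ is trivial. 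Hence $L(\tfrac12,\pi_{K,v}\otimes\chi_v)=L(\tfrac12,\St_{K_v})=\zeta_{K_v}(1)=(1-q_v^{-1})^{-1}$ (using $q_{K_v}=q_v$), and the ratio is $(1-q_v^{-2})/(1-q_v^{-1})=1+q_v^{-1}$.

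The main obstacle is the adjoint computation in part (i): one must check that ramified constituents of $\Ad\circ\phi_v$ never contribute, and that in the ``triply dihedral'' case, where $\Ad\circ\phi_v$ is a sum of three quadratic characters, still at most one of them — the unramified quadratic character, present exactly for unramified dihedral $\pi_v$ — is unramified. The identification of ``dihedral from the unramified quadratic extension'' with ``minimal of even conductor'' is taken from \cite{tunnell:1978}. In part (ii) the only mildly delicate point is tracking uniformizers when evaluating $\nu(\varpi_{K,v})$.
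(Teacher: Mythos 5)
Part (1) of your argument is correct, and it takes a more self-contained route than the paper: the paper disposes of the numerator by noting that the base change of a supercuspidal representation is supercuspidal or a principal series with both inducing characters ramified (so the local factor is $1$), and simply cites \cite{NPS} for the value of $L(s,\pi_v,\Ad)$, whereas you recompute the adjoint factor on the Galois side via the self-twist description of $\Ad\circ\phi_v$. This works: the only imprecision is that in the primitive (non-dihedral) case $\Ad\circ\phi_v$ is irreducible of dimension $3$ rather than ``characters plus $2$-dimensional induced pieces,'' but any irreducible constituent of dimension $\ge 2$ has no inertia invariants, so your conclusion is unaffected; and your identification of the unramified self-twist case with ``unramified dihedral'' via \cite{tunnell:1978} is exactly the paper's definition.

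In part (2) your reduction to $\pi_{K,v}\otimes\chi_v\simeq\St_{K_v}\otimes\nu$ with $\nu=(\mu_v\circ N_{K_v/F_v})\chi_v$, and to $L(1,\pi_v,\Ad)=\zeta_{F_v}(2)$, is the same as the paper's, but your verification that $\nu$ is trivial does not go through as written. You need $\mu_v(N_{K_v/F_v}(\varpi_{K,v}))=\mu_v(\varpi_{F,v})$, and you justify this by saying the two arguments differ by a unit while $\mu_v\circ N_{K_v/F_v}$ is unramified; but unramifiedness of $\mu_v\circ N_{K_v/F_v}$ only says that $\mu_v$ is trivial on $N(\frako_{K,v}^\times)$, which is an index-$2$ subgroup of $\frako_{F,v}^\times$, and the unit $N_{K_v/F_v}(\varpi_{K,v})/\varpi_{F,v}$ need not be a norm of a unit. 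The point is that $\mu_v$ itself may be ramified under the stated hypotheses (e.g.\ $\mu_v=\eta_{K_v/F_v}$, for which $\mu_v\circ N_{K_v/F_v}=1$ is unramified); this is precisely the case of a ramified twist of Steinberg at $v\mid\frakN_2$, which is in scope. In that case $\mu_v\bigl(N_{K_v/F_v}(\varpi_{K,v})/\varpi_{F,v}\bigr)=\eta_{K_v/F_v}(\varpi_{F,v})$, which for a general choice of uniformizer can equal $-1$; then $\nu$ is the nontrivial unramified quadratic character of $K_v^\times$ and the displayed ratio comes out to $1-q_v^{-1}$, not $1+q_v^{-1}$. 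The hypothesis has to be read with the compatible normalization $\varpi_{F,v}=N_{K_v/F_v}(\varpi_{K,v})$ (equivalently, in the form $\mu_v(N_{K_v/F_v}(\varpi_{K,v}))=\chi_v(\varpi_{K,v})$, which is the condition that actually comes out of \cref{pichi-adm} and the definition of $\mathcal F(\frakN;\chi)$); with that reading one gets $\nu(\varpi_{K,v})=\mu_v(N_{K_v/F_v}(\varpi_{K,v}))\chi_v(\varpi_{K,v})=\chi_v(\varpi_{K,v})^2=1$ immediately, with no claim about units needed. So the structure of your part (2) matches the paper, but the unit-tracking inference you flag as the ``only mildly delicate point'' is exactly the step that is incorrect as stated and needs this repair.
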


\begin{proof}
First suppose $\pi_v$ is supercuspidal.  Then $\pi_v$ corresponds to an
irreducible 2-dimensional representation $\sigma_v$ of the Weil group $W_{F_v}$
by the local Langlands correspondence, and the base change $\pi_{K_v}$
corresponds to the restriction of $\sigma_{K_v}$ of $\sigma_v$ to $W_{K_w}$,
where $w$ is a place of $K$ above $v$.  
If $\sigma_{K_v}$ is irreducible, then $\pi_{K_v}$ is supercuspidal.
Otherwise, $\sigma_{K_v} \simeq \tau_w \oplus \tau_w^{-1}$ for some character
$\tau_w$ of $W_{K_w}$.
By inductivity of $L$-functions, $L(s, \tau_w) = L(s, \sigma_v) = L(s, \pi_v) = 1$,
so $L(s, \pi_{K_v}) = L(s, \tau_w) L(s, \tau_w^{-1}) = 1$ and $\pi_{K_v}$ must be
a ramified principal series.

In either case, $\pi_{K_v}$ has conductor at
least 2, so $L(s, \pi_{K,v} \otimes \chi_v) = 1$ since $\chi_v$ is unramified.  
From the calculations in
\cite{NPS}, $L(s,\pi_v,\Ad)$ is either $(1+q_v^{-s})^{-1}$ or $1$ according to whether
$\pi_v$ is unramified dihedral or not.  This yields (1).

Next assume the hypotheses of (2).  Via the local Langlands correspondence,
it is easy to see that $\pi_{K,v} \simeq \St_{K,v} \otimes (\mu_v \circ N)$.
Thus
$L(s, \pi_{K,v} \otimes \chi_v) = L(s, (\mu_v \circ N) \chi_v) = L(s, 1_{F_v})$, 
and $L(s,\pi_v, \Ad) = (1-q_v^{-s-1})^{-1}$.  This yields (2).
\end{proof}

\subsection{Local spectral distributions} \label{sec:loc-spec}
Here we will compute relevant local spectral distributions at a nonarchimedean
place $v$ which splits $K/F$.  To ease notation,
for this section only, all objects are local and we drop the subscripts $v$.
In particular, $F$ is now a $\frakp$-adic field, and $K=F \times F$.

Let $\pi = \pi(\mu, \mu^{-1})$ be
an unramified principal series representation of $\GL_2(F)$ with 
trivial central character.  
Consider a character $\chi$ of $K^\times = F^\times \times F^\times$
given by $\chi(a,b) = \chi_1(a) \chi_{2}(b)$, 
where $\chi_{1}, \chi_{2}$
are 2 unramified characters of $F^\times$ such that $\chi_{2} = \chi_{1}^{-1}$.
Let $\psi$ be an additive character of order 0, and $\mathcal W$ be the
$\psi$-Whittaker model for $\pi$.  For $W \in \mathcal W$,
we abbrieviate $W\bmx a & \\ & 1 \emx$ by $W(a)$.  (Alternatively, one can think of 
$W$ as an element of the Kirillov model.)
Let $d^\times a$ denote the Haar measure on $F^\times$ which gives
$\frako_F^\times$ volume 1, i.e., $d^\times a$ is self-dual with respect to $\psi$.

Put
\[ \ell(W) = \int_{F^\times} W(a) \chi_1^{-1}(a) \, d^\times a. \]
As in \cite[Section 2.1]{MW}, we define the local distribution
\begin{equation}
 \tilde J_{\pi}(f) = \sum_W \ell(\pi(f)W) \overline{\ell(W)}, 
\end{equation}
where $f \in C_c^\infty(\GL_2(F))$ and $W$ runs over an orthonormal basis for 
$\mathcal W$.

If $f_v$ is the characteristic function of
$\GL_2(\frako)$ divided by its volume, then (see \emph{loc.\ cit.})
\begin{equation} \label{eq:52}
 \tilde J_{\pi}(f) =  \frac{L(2,1_F)}{L(1,1_F)}
 \cdot \frac{L(\frac 12, \pi \otimes \chi_1)L(\frac 12, \pi \otimes \chi_2)}{L(1,\pi,\Ad)}.
 \end{equation}
 
\begin{lemma} \label{lem:53} Consider the Eichler order 
$R = \bmx \frako & \frakp \\ \frako & \frako \emx$ of level $\frakp$ in $M_2(\frako)$.
When $f = \vol(R^\times)^{-1} 1_{R^\times}$, we have
\[ \tilde J_{\pi}(f) = \frac{L(2,1_F)}{L(1,1_F)} \left(
 \frac{L(\frac 12, \pi \otimes \chi_1)L(\frac 12, \pi \otimes \chi_2)} {L(1, \pi, \Ad)} 
+ \frac 1{L(1,1_F)} \right). \]
\end{lemma}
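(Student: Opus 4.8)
The plan is to exploit that $\pi(f)$ is a projection onto a small space and to split off the spherical contribution \eqref{eq:52}. Since $f = \vol(R^\times)^{-1} 1_{R^\times}$ is the normalized indicator of the compact open subgroup $R^\times$, the operator $\pi(f)$ is the self-adjoint idempotent projecting the Whittaker model $\mathcal W$ onto $\mathcal W^{R^\times}$, so $\tilde J_\pi(f) = \sum_i |\ell(W_i)|^2$ for any orthonormal basis $\{W_i\}$ of $\mathcal W^{R^\times}$. Because $R$ is an Eichler order of level $\frakp$ and $\pi$ is unramified, $\mathcal W^{R^\times}$ is two-dimensional; moreover $R^\times \subseteq \GL_2(\frako)$, so the normalized spherical Whittaker vector $W_0$ lies in $\mathcal W^{R^\times}$, and a short computation (conjugating $R^\times$ to the standard $K_0(\frakp)$ by $\bmx 0&1\\1&0\emx$ and using triviality of the central character) shows that $\mathcal W^{R^\times}$ is spanned by $W_0$ and a translate $W_1$ of $W_0$ by the diagonal torus. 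Writing $\pi(f) = \pi(f^\circ) + (\pi(f) - \pi(f^\circ))$ with $f^\circ$ the normalized indicator of $\GL_2(\frako)$, the distribution splits as $\tilde J_\pi(f) = \tilde J_\pi(f^\circ) + \tilde J_\pi(f')$, where $\pi(f')$ is the rank-one orthogonal projection onto the line $\mathcal W^{R^\times} \ominus \langle W_0 \rangle$. The summand $\tilde J_\pi(f^\circ)$ is exactly \eqref{eq:52}, so the whole statement reduces to proving $\tilde J_\pi(f') = L(2,1_F)/L(1,1_F)^2$.

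For that, I would take the Gram--Schmidt generator $e = W_1 - \tfrac{\langle W_1, W_0\rangle}{\langle W_0,W_0\rangle} W_0$ of the line, so that $\tilde J_\pi(f') = |\ell(e)|^2 / \langle e, e\rangle$. Every ingredient is explicit. By the Shintani formula for the unramified Whittaker function of $\pi = \pi(\mu,\mu^{-1})$, one has, with $\alpha = \mu(\varpi)$, $W_0\bmx \varpi^n & \\ & 1 \emx = q^{-n/2}\tfrac{\alpha^{n+1}-\alpha^{-n-1}}{\alpha-\alpha^{-1}}$ for $n \ge 0$ and $0$ otherwise. Summing the resulting geometric series yields closed forms for $\langle W_0, W_0 \rangle$ (a rational multiple of $L(1,\pi,\Ad)$), for $\langle W_1, W_0 \rangle$ (a shifted version of the same sum, since $W_1$ is a torus translate of $W_0$), for $\ell(W_0)$ (a local zeta integral, equal to a $\GL_1$-twisted local $L$-factor of $\pi$), and for $\ell(W_1) = \chi_1(\varpi)\,\ell(W_0)$ (by the change of variable $a \mapsto a\varpi$ in the definition of $\ell$).

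Assembling these four evaluations into $|\ell(e)|^2/\langle e,e\rangle$ is the crux. The output is a rational function of $q^{-1}$, $\alpha$, and $\chi_1(\varpi)$; feeding in the value of $\langle W_0, W_0\rangle$ (equivalently, the content of \eqref{eq:52}) one finds that the adjoint $L$-factor and the twisted $L$-factors drop out, leaving the bare zeta factor $L(2,1_F)/L(1,1_F)^2 = (1-q^{-1})/(1+q^{-1})$, and adding this to \eqref{eq:52} gives the claim. The main obstacle is exactly this last cancellation: one has to see that the ``new-vector'' contribution $\tilde J_\pi(f')$, unlike the spherical term \eqref{eq:52}, carries no $L$-values but only a local zeta factor, and making this transparent takes careful bookkeeping with the Whittaker inner product and the Clebsch--Gordan identities for the coefficients $\tfrac{\alpha^{n+1}-\alpha^{-n-1}}{\alpha-\alpha^{-1}}$. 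This computation runs parallel to, and is lighter than, the analogous local spectral calculation at a prime dividing $M$ in \cite{FMP}; the complementary-series case for $\pi$ follows either by the same manipulations or by analytic continuation in $\alpha$ from the tempered range.
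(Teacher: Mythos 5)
Your reduction is, in substance, the same argument as the paper's: $\pi(f)$ is the orthogonal projection onto the two-dimensional space spanned by $W_0$ and $W_1 = \pi\bmx \varpi & \\ & 1 \emx W_0$, and everything comes down to the four quantities $\ell(W_0)$, $\ell(W_1)=\chi_1(\varpi)\ell(W_0)$, $\langle W_0,W_0\rangle$ and $\langle W_1,W_0\rangle$; the paper simply runs Gram--Schmidt in the other order (orthonormalizing $W_1$ together with a multiple of $W_0$ minus its projection onto $W_1$, quoting \cite{FMP}), while you split off the spherical line first so that \eqref{eq:52} appears as one of the two terms. Up to that point your plan is fine.

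The gap is the crux step you assert rather than carry out: the claimed cancellation $\tilde J_\pi(f)-\tilde J_\pi(f^\circ)=L(2,1_F)/L(1,1_F)^2$ does not follow from the computation you describe for a general unramified $\chi_1$. Write $c=\chi_1(\varpi)$, $A=\mu(\varpi)+\mu(\varpi)^{-1}$, $u=q^{-1/2}$. Your own ingredients (Shintani formula, $\ell(W_1)=c\,\ell(W_0)$, $\langle W_1,W_0\rangle/\langle W_0,W_0\rangle = Au/(1+u^2)$, and the standard evaluations of $\ell(W_0)$ and $\langle W_0,W_0\rangle$) assemble to
\[ \frac{|\ell(e)|^2}{\langle e,e\rangle} \;=\; \frac{L(2,1_F)}{L(1,1_F)^2}\,\bigl\lvert 1-cAu+u^2 \bigr\rvert^2\, L(\tfrac 12,\pi\otimes\chi_1)\,L(\tfrac 12,\pi\otimes\chi_2), \]
and the $L$-values drop out only if $\lvert 1-cAu+u^2\rvert^2$ coincides with $[L(\frac 12,\pi\otimes\chi_1)L(\frac 12,\pi\otimes\chi_2)]^{-1}=(1-cAu+c^2u^2)(1-c^{-1}Au+c^{-2}u^2)$; the difference between these two expressions is $u^2(2-c^2-c^{-2})$, which vanishes exactly when $\chi_1(\varpi)=\pm 1$. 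For instance, take $\mu(\varpi)=i$ and $\chi_1(\varpi)=i$: then $W_1\perp W_0$ and $\tilde J_\pi(f)-\tilde J_\pi(f^\circ)=|\ell(W_1)|^2/\langle W_1,W_1\rangle=(1+q^{-1})/(1-q^{-1})$, the reciprocal of the value $(1-q^{-1})/(1+q^{-1})$ you claim. So, as written, your argument establishes the displayed formula only when $\chi_1$ is trivial or quadratic, and in general it produces the extra factor $\lvert 1-\chi_1(\varpi)(\mu(\varpi)+\mu(\varpi)^{-1})q^{-1/2}+q^{-1}\rvert^2 L(\frac 12,\pi\otimes\chi_1)L(\frac 12,\pi\otimes\chi_2)$ in the second term. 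Note that the very same identification is what the final line of the paper's proof (and, downstream, \eqref{lambdaMQ}) relies on, so this is a point to be settled by an honest computation with the extra hypothesis $\chi_1(\varpi)^2=1$ made explicit, or with the second term corrected, rather than by asserting that the bookkeeping ``leaves the bare zeta factor.''
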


\begin{proof}
Let $W_0$ be the Whittaker new vector in $\mathcal W$  of norm 1
which is invariant under
$\GL_2(\frako)$.  Then $\pi(f)$ acts as orthogonal projection onto the
subspace $V$ of $\mathcal W$ spanned by $W_0$ and $W_1 = \pi \bmx \varpi & \\ & 1 \emx W_0$.  From \cite[Section 8B]{FMP}, an orthonormal basis for $V$ is given by
$W_1$ and $W_2$, where
\[ W_2 = \left(\frac{L(1,\pi,\Ad)L(1,1_F)}{L(2,1_F)^2} \right)^{\frac 12} 
\left( W_0 - \frac{\mu(\varpi) + \mu(\varpi)^{-1}}{q^{\frac 12}(1 +q^{-1})} W_1 \right). \]
Hence $\tilde J_{\pi}(f) = |\ell(W_1)|^2 + |\ell(W_2)|^2$.
It is well known that
\[ \ell(W_0) = \left( \frac{L(2, 1_F)}{L(1,\pi, \Ad)L(1,1_F)} \right)^{\frac 12} L(\frac 12, \pi \otimes \chi_1), \]
and we see that $\ell(W_1) = \chi_1(\varpi) \ell(W_0)$
and
\[ |\ell(W_2)|^2 = \lvert1 - \chi_1(\varpi)(\mu(\varpi) + \mu(\varpi)^{-1})q^{-\frac 12}+ q^{-1} \rvert^2 \frac{L(1,\pi,\Ad)}{L(1,1_F)} |\ell(W_0)|^2. \]
This yields the lemma.
\end{proof}

\subsection{$L$-values and periods} \label{sec:54}
For the rest of this section, assume that $K_v/F_v$ is split for each $v | \frakM$,
i.e., $K/F$ is $(\frakN_1, \frakN_2, \frakM)$-admissible.

Suppose $\pi$ is a cuspidal representation of $B^\times(\A_F)$ or $\GL_2(\A_F)$. 
Denote by $S(\pi)$ the set of finite places $v$ at which $\pi$ is ramified and
 $S(\pi, K)$ the subset of $v \in S(\pi)$ such that $K_v/F_v$ is ramified.
Denote by $\eta = \eta_{K/F}$ the quadratic character of $\A_F^\times$ associated
to $K/F$ by class field theory.  Recall $u_K = [\frako_K^\times : \frako_F^\times]$.
Put 
\begin{equation}
 C(K; \frakN) =  2^{\# S(\pi,K)-1} h_F D_F^{-2}
u_K^2 |D_K|^{1/2} \prod_{v | \frakN_2} \frac 1{1+q_v^{-1}}.
\end{equation}
By our assumption that $K_v/F_v$ is split at all $v | \frakM$, we see that $S(\pi,K)$ 
and thus $C(K; \frakN)$ does not actually depend upon $\pi$.

We also set $\Lambda_\frakM(\pi, \chi) = \prod_{v | \frakM} \Lambda_v(\pi, \chi)$,
where
\begin{equation} \label{lambdaM}
 \Lambda_v(\pi, \chi) = 
 \begin{cases}
 \frac 1{1+q_v^{-1}}\left( 1 + (1-q_v^{-1}) \frac{L(1, \pi_v, \Ad)}{L(\frac 12, \pi_{K_v} \otimes \chi)} \right) & v | \frakM, v \not \in S(\pi) \\
1 & v | \frakM, v \in S(\pi) .
  \end{cases}  
\end{equation}

\begin{prop} \label{lval:rel}
Let $\pi$ be a cuspidal representation of $B^\times(\A_F)$ occurring
in $S(\calO)$, and $\Phi_\pi$ be an orthogonal basis for $\pi^{\hat \calO^\times} \subset S(\calO)$.  Let $\chi \in \widehat{\Cl}(K)$ be such that $\Hom_{\A_K^\times}(\pi,\chi) \ne 0$. Then
\begin{equation} \label{eq:lval}
  N(\frakN) \sum_{\phi \in \Phi_\pi} \frac{|P_{K,\chi}(\phi)|^2}{(\phi,\phi)} =
C(K; \frakN) \Lambda_\frakM(\pi,\chi)  \frac{L(\frac 12, \pi_K \otimes \chi)}
{ L(1, \pi, \Ad) }.
\end{equation}
\end{prop}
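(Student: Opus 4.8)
The strategy is to reduce the identity \eqref{eq:lval} to the local $L$-value formula of Martin--Whitehouse \cite{MW}, which expresses a global period square as a product of a global $L$-value ratio times a product of local spectral distributions $\tilde J_{\pi_v}$, and then to evaluate each local factor against the test function that cuts out $\hat\calO^\times$-invariants. First I would set up the relative trace formula side: let $\pi'$ be the Jacquet--Langlands transfer of $\pi$ to $\GL_2(\A_F)$ (the hypothesis $\Hom_{\A_K^\times}(\pi,\chi)\ne 0$ together with \cref{pichi-adm} guarantees the relevant root number is $+1$, so the transfer exists and the period is potentially nonzero), and invoke the Waldspurger-type formula from \cite{MW} in the form
\[
\frac{|P_{K,\chi}(\phi)|^2}{(\phi,\phi)} \;=\; (\ast)\,\cdot\, \frac{L(\tfrac12, \pi_K\otimes\chi)}{L(1,\pi,\Ad)} \prod_{v} \frac{\tilde J_{\pi_v}(f_v)}{(\text{local }L\text{-ratio})_v},
\]
where $f=\prod_v f_v$ is chosen so that $\pi'(f_v)$ (resp.\ the quaternionic analogue) projects onto the line of $\calO_v^\times$-fixed vectors, and $(\ast)$ collects the archimedean and discriminant constants that will assemble into $C(K;\frakN)$ (the factors $h_F D_F^{-2} u_K^2 |D_K|^{1/2}$ are exactly the standard constants in Waldspurger's formula under $h_F=1$, and the power of $2$ is $2^{\#S(\pi,K)-1}$ coming from the local dichotomy). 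Summing over the orthogonal basis $\Phi_\pi$ just replaces $|P_{K,\chi}(\phi)|^2/(\phi,\phi)$ for a single $\phi$ by the trace of the projection, which is what the spectral distribution computes.

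**Local computations.** The heart of the matter is then a place-by-place evaluation of $\tilde J_{\pi_v}(f_v)/(\text{local }L\text{-ratio})_v$, and here the excerpt has set up precisely the needed inputs. At $v\nmid \frakN$ one uses \eqref{eq:52}: the local distribution against the unit-volume idempotent of $\GL_2(\frako_v)$ equals the local $L$-ratio up to the harmless factor $L(2,1_{F_v})/L(1,1_{F_v})$, and these combine tel_escopically into a global $\zeta_F$-factor that is absorbed into the normalization of $(f,f)$ versus the adjoint $L$-value. At $v|\frakN_1$, $\pi_v$ is either a twisted Steinberg or (at higher odd powers) a minimal supercuspidal on the division side; the embedding $\frako_{K,v}\subset\calO_v$ is unramified, and one checks the local distribution contributes trivially — giving the $\Lambda_v=1$ convention. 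At $v|\frakN_2$, $\pi_v$ corresponds to a depth-zero (unramified dihedral) supercuspidal, and \cref{loc-lfac}(1) supplies the ratio $1+q_v^{-1}$; this is exactly what the factor $\prod_{v|\frakN_2}(1+q_v^{-1})^{-1}$ in $C(K;\frakN)$ and the $N(\frakN)$ on the left-hand side are there to cancel. At $v|\frakM$ — where by the standing assumption $K_v/F_v$ is split and $\chi_v$ is unramified — one applies \cref{lem:53}: if $\pi_v$ is unramified principal series (the oldform case) the distribution against $\vol(R^\times)^{-1}1_{R^\times}$ picks up the extra additive term $1/L(1,1_{F_v})=1-q_v^{-1}$, which upon dividing by the local $L$-ratio produces exactly the bracketed expression in \eqref{lambdaM}; if $\pi_v$ is twisted Steinberg ($p$-new) the computation collapses and $\Lambda_v=1$.

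**Assembling.** Finally I would multiply all the local factors: the global $L$-ratio $L(\tfrac12,\pi_K\otimes\chi)/L(1,\pi,\Ad)$ survives as the main term, the constants coalesce into $C(K;\frakN)$, the stray $\zeta_F(2)$-type Euler factors cancel against the measure normalization implicit in writing the average with $(\phi,\phi)$ rather than with $L(1,\pi,\Ad)$, and the remaining local discrepancies at $v|\frakM$ (and the trivial ones at $v|\frakN_1\frakN_2$) bundle into $\Lambda_\frakM(\pi,\chi)$, yielding \eqref{eq:lval}.

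**Main obstacle.** The chief difficulty is bookkeeping at the ramified places, particularly translating the quaternionic period $P_{K,\chi}$ (defined combinatorially on $\Cl(\calO)$) into the analytic period appearing in \cite{MW}, and verifying that the test function $f_v$ whose orbital integrals realize the $\hat\calO^\times$-projection is the one for which the local distribution was computed in \cref{sec:loc-spec} — i.e.\ matching the Eichler order $R$ of \cref{lem:53} with the local structure $\calO_v$ at $v|\frakM$, and confirming at $v|\frakN_1\frakN_2$ (division side) that the projection onto $\calO_v^\times$-invariants is compatible with the local newform/oldform description from \cite{me:basis}. Getting the power of $2$, the sign normalizations in the dichotomy, and the exact shape of $C(K;\frakN)$ right is where care is needed; everything else is the assembly of the lemmas already proved in \cref{sec:loc-lfac} and \cref{sec:loc-spec}.
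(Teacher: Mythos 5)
Your overall route is the paper's: the Martin--Whitehouse spectral-distribution formula (in the normalization of \cite{FMP}) evaluated against a projector cutting out $\hat\calO^\times$-invariants, with \eqref{eq:52}, \cref{lem:53} and \cref{loc-lfac} doing the local bookkeeping and the constants assembling into $C(K;\frakN)$. But there is a genuine gap at the ramified places, which you flag as the ``main obstacle'' without resolving. The formula of \cite{MW} is available only for the specific test function attached to the conductor of $\pi$, i.e.\ the projector onto $(\hat\calO')^\times$-invariants for a special order $\calO'$ of level $c(\pi)$; it does not directly evaluate the distribution against the $\hat\calO^\times$-projector when $c(\pi)$ is a proper divisor of $\frakN$. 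Your plan to evaluate the local distributions at $v\mid\frakN_1\frakN_2$ by hand (asserting a trivial contribution at $v\mid\frakN_1$ and the factor $1+q_v^{-1}$ at $v\mid\frakN_2$) is not backed by anything in the paper: \cref{sec:loc-spec} only treats split $v$ with an Eichler order of level $\frakp$, and computing $\tilde J_{\pi_v}$ against a special-order projector on the division algebra (where the invariant space can be $2$-dimensional at $v\mid\frakN_2$) would be genuinely new local work. Moreover, the factors you attribute to those local distributions in fact arise differently in the proof: they come from converting the partial $L$-functions $L^{S_0(\pi)}$ appearing in the MW formula into completed ones via \cref{loc-lfac}.

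The missing idea that closes this gap is the reduction to conductor level: by \cite{me:basis} one has $\pi^{\hat\calO^\times}=\pi^{(\hat\calO'')^\times}$ for a special order $\calO''$ of level type $(\frakN_1',\frakN_2',\frakM)$ between $\calO'$ and $\calO$, i.e.\ deepening the order at the division places creates no new invariant vectors. Hence one applies MW verbatim with the conductor-level test function to obtain \eqref{eq:Pprime}, and the test function only needs to be altered at the split places $v\mid\frakM(\frakM')^{-1}$, where the ratio $\tilde J_{\pi_v}(f'_v)/\tilde J_{\pi_v}(f_v)$ is exactly what \eqref{eq:52} and \cref{lem:53} compute, yielding $\Lambda_v(\pi,\chi)$. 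With that reduction in place, your remaining steps (the translation $(\phi,\phi)_{\mathrm{Tam}}=\tfrac{2}{m(\calO)}(\phi,\phi)$, the volume of $K^\times\A_F^\times\bs\A_K^\times$, and the cancellation of the $\zeta_F$-type constants) go through essentially as you describe; also note the small slip that at $v\mid\frakN_1$ with $\ord_v(\frakN_1)=1$ the extension $K_v/F_v$ may be ramified, not only unramified.
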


\begin{proof}
Let $\frakN' = c(\pi)$, and let $\calO'$ be a special order of level type
$(\frakN_1', \frakN_2', \frakM')$ such that $\frakN' = \frakN_1' \frakN_2' \frakM'$
and $\calO' \supset \calO$.

Take for measures on $B^\times(\A_F)$, $\A_K^\times$ and $\A_F^\times$ 
the product of the local Tamagawa measures.  Consider a test function 
$f = \prod f_v \in C_c^\infty(B^\times(\A_F))$ where
$f_v$ is the characteristic function of $(\calO_v')^\times$
divided by its volume for $v < \infty$.  For $v | \infty$,
choose $f_v$ so that $\int_{B^\times(F_v)} f_v(g) \, dg = 1$.  Then
$\pi(f)$ acts as orthogonal projection onto $S(\calO') \cap \pi$.
For $\phi, \phi' \in S(\calO)$, put
\[ (\phi, \phi')_{\mathrm{Tam}} = \int_{\A_F^\times B^\times \bs B^\times(\A_F)} \phi(x)
\overline{\phi'(x)} \, dx. \]
Since $\A_F^\times B^\times \bs B^\times(\A_F)$ has Tamagawa volume 2,
we see $(\phi, \phi')_{\mathrm{Tam}} = \frac 2{m(\calO)}(\phi, \phi')$.

Let $S_0(\pi) = S(\pi,K) \cup \{ v \in S(\pi) : c(\pi_v) \ge 2 \}$.
Then by the proof of \cite[Theorem 4.1]{MW} (see \cite[Theorem 1.1]{FMP} 
for a formulation consistent with our present choice of measures), 
we have the formula
\[ J_\pi(f) = \frac {2^{\# S(\pi,K) - 1}}{\pi^{[F:\Q]}} \sqrt{\frac{D_F}{|D_K|}} L_{S(\pi)}(1,\eta)L^{S(\pi)}(2,1_F)
 \frac{L^{S_0(\pi)}(\frac 12, \pi_K \otimes \chi)}
{ L^{S_0(\pi)}(1, \pi, \Ad) },\]
where $J_\pi(f)$ is a certain spectral distribution, which for our choice of test
function $f$ is given by,
\begin{equation} \label{eq:Jpi}
 J_\pi(f) = \frac{m(\calO)}2 \left( \frac{\vol(K^\times \A_F^\times \bs \A_K^\times)}{h_K} \right)^2 \sum_{\phi} \frac{|P_{K,\chi}(\phi)|^2}{(\phi,\phi)},
\end{equation}
and $\phi$ runs over an orthogonal basis $\Phi'$ for $\pi^{(\hat \calO')^\times}$.
Note
\[
\frac{\vol(K^\times \A_F^\times \bs \A_K^\times)}{h_K} 
= \frac{2L(1, \eta)}{h_K} = \frac {2^{[F:\Q]}\sqrt{D_F}}
{u_K h_F \sqrt{|D_K|}}. 
\]
Putting everything together and using the identity
\[ \frac{L(2, 1_F)}{|\zeta_F(-1)|} = \frac{(2\pi)^{[F:\Q]}}{D_F^{3/2}} \]
shows that 
$N(\frakN) \sum_{\phi \in \Phi'} \frac{|P_{K,\chi}(\phi)|^2}{(\phi,\phi)}$
equals
\[ C(K; \frakN) \cdot  \prod_{v \in S(\pi, K)} (1+q_v^{-1}) 
\cdot \prod_{v | \frakM (\frakM')^{-1}} \frac 1{1+q_v^{-1}} \cdot 
\frac{L^{S_0(\pi)}(\frac 12, \pi_K \otimes \chi)}
{ L^{S_0(\pi)}(1, \pi, \Ad) }. \]
By \cref{loc-lfac}, we may rewrite this as
\begin{equation} \label{eq:Pprime}
N(\frakN) \sum_{\phi \in \Phi'} \frac{|P_{K,\chi}(\phi)|^2}{(\phi,\phi)} =
C(K; \frakN) \cdot \prod_{v |  \frakM (\frakM')^{-1}} \frac 1{1+q_v^{-1}}
\cdot \frac{L(\frac 12, \pi_K \otimes \chi)} { L(1, \pi, \Ad) }.
\end{equation}

Now we need to relate the sum of square periods over $\Phi'$ to one over $\Phi_\pi$.

By \cite{me:basis}, we have $\pi^{\hat \calO^\times} = \pi^{(\hat \calO'')^\times}$,
where $\calO''$ is a special order of level type $(\frakN_1', \frakN_2', \frakM)$
lying between $\calO'$ and $\calO$.  Now we take a test function 
$f' = \prod f_v' \in C_c^\infty(B^\times(\A_F))$ such that $f_v' = f_v$ for $v \nmid
\frakM (\frakM')^{-1}$ and $f'_v$ is the characteristic function of
$(\calO''_v)^\times = \calO_v^\times$ divided by its volume for $v | \frakM (\frakM')^{-1}$.  
Then, as in  \cite[Section 8B]{FMP}, we can write 
\[ J_\pi(f') = J_\pi(f) \cdot \prod_{v | \frakM (\frakM')^{-1}} 
\frac{\tilde J_{\pi_v}(f'_v)} {\tilde J_{\pi_v}(f_v)}, \]
where $J_\pi(f')$ is given by \eqref{eq:Jpi} but now with
$\phi$ running over $\Phi_\pi$.  Now comparing \eqref{eq:52} with \cref{lem:53}
shows that for $v | \frakM (\frakM')^{-1}$, 
\[ \frac{\tilde J_{\pi_v}(f'_v)} {\tilde J_{\pi_v}(f_v)} =
1 + (1-q_v^{-1})\frac{L(1, \pi_v, \Ad)}{L(\frac 12, \pi_{K_v} \otimes \chi)}. \]
Combining this with \eqref{eq:Pprime} gives the proposition.
\end{proof}

\subsection{Average $L$-values}

Let $\mathcal F(\frakN)$ denote the set of holomorphic parallel weight 2
cuspidal automorphic representations $\pi$ of $\GL_2(\A_F)$ with
trivial central character such that (i) $c(\pi) | \frakN$;
(ii) $c(\pi_v)$ is odd for $v | \frakN_1$; and (iii) $\pi_v$ is a discrete
series representation for $v | \frakN_2$.  
For $\pi \in \mathcal F(\frakN)$, let $\pi_B$ be the corresponding automorphic
representation of $B^\times(\A_F)$.
For a character $\chi$ of $\Cl(K)$, denote by $\mathcal F(\frakN; \chi)$
the subset of $\pi \in \mathcal F(\frakN)$ such that 
$\Hom_{\A_K^\times}(\pi_B, \chi) \ne 0$.  Thus by \cref{pichi-adm}, 
$\pi \in \mathcal F(\frakN; \chi)$ if and only if, for any place $v | c(\pi)$ such that 
$K_v/F_v$ is ramified (so $v | \frakN_1 \frakN_2$) and $\pi_v \simeq \St_v \otimes
\mu_v$, we have $\mu_v(\varpi_{F,v}) = \chi_v(\varpi_{K,v})$.

\begin{thm} \label{main-thm}
Let $\frakm$ be a nonzero integral ideal of $\frako_F$ which is
coprime to $\frakN \frakD_B^{-1}$.  Then
\begin{multline*} C(K; \frakN) \sum_{\chi \in \widehat{\Cl}(K)}   \sum_{\pi \in \mathcal F(\frakN; \chi)} \lambda_\frakm(\pi)  \Lambda_\frakM(\pi,\chi)
\frac{L(\frac 12, \pi_K \otimes \chi)}
{L(1, \pi, \Ad)} = \\
N(\frakN) h_K \left( \langle c_K, a(\frakm) \rangle - \delta^+(\calO, \frakm) \deg T_\frakm
\frac{h_K |\Cl^+(N(\hat \calO))|}{m(\frakN_1, \frakN_2, \frakM)} \right).
\end{multline*}
\end{thm}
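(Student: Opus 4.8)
The plan is to combine the exact double period average of \cref{cor:per-avg} with the period-to-$L$-value identity of \cref{lval:rel}, reindexing the sum over quaternionic eigenforms as a sum over automorphic representations via Jacquet--Langlands. First I would choose an orthogonal basis $\Phi_0$ of $S(\calO)$ adapted to the spectral decomposition $S(\calO)=\bigoplus_{\pi_B}\pi_B^{\hat\calO^\times}$, i.e.\ take $\Phi_0=\bigsqcup_{\pi_B}\Phi_{\pi_B}$ where $\pi_B$ runs over the cuspidal representations of $B^\times(\A_F)$ occurring in $S(\calO)$ and $\Phi_{\pi_B}$ is an orthogonal basis of $\pi_B^{\hat\calO^\times}$ (orthogonality across distinct $\pi_B$ being automatic). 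Since $\frakm$ is coprime to $\frakN\frakD_B^{-1}$, $T_\frakm$ acts on each $\pi_B^{\hat\calO^\times}$ by the scalar $\lambda_\frakm(\pi_B)$, so in \cref{cor:per-avg} the weight $\lambda_\frakm(\phi)$ is constant equal to $\lambda_\frakm(\pi_B)$ along $\Phi_{\pi_B}$, and the left-hand side of \cref{cor:per-avg} becomes $\sum_{\pi_B}\lambda_\frakm(\pi_B)\sum_{\chi}\sum_{\phi\in\Phi_{\pi_B}}|P_{K,\chi}(\phi)|^2/(\phi,\phi)$.

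Next I would evaluate the inner double sum for each fixed $\pi_B$. If $\chi|_{\A_F^\times}\ne 1$ or $\Hom_{\A_K^\times}(\pi_B,\chi)=0$, then $\phi\mapsto P_{K,\chi}(\phi)$ is the restriction to $\pi_B$ of an element of $\Hom_{\A_K^\times}(\pi_B,\chi)$ and so vanishes identically; for the remaining $\chi$ I would apply \cref{lval:rel}, which gives $N(\frakN)\sum_{\phi\in\Phi_{\pi_B}}|P_{K,\chi}(\phi)|^2/(\phi,\phi)=C(K;\frakN)\,\Lambda_\frakM(\pi,\chi)\,L(\tfrac12,\pi_K\otimes\chi)/L(1,\pi,\Ad)$, where $\pi=\JL(\pi_B)$ and the $L$-factors and the weight $\Lambda_\frakM$, being defined through local data at places dividing $\frakM$ (where $B$ splits) and base-change $L$-functions, are Jacquet--Langlands invariant. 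I would then reindex: by \cref{JL} together with its local inputs from \cite{me:basis}, $\pi_B\mapsto\pi$ identifies the cuspidal representations occurring in $S(\calO)$ with $\mathcal F(\frakN)$ compatibly with the Hecke action (so $\lambda_\frakm(\pi_B)=\lambda_\frakm(\pi)$), and by the very definition of $\mathcal F(\frakN;\chi)$ the condition $\Hom_{\A_K^\times}(\pi_B,\chi)\ne 0$ is exactly $\pi\in\mathcal F(\frakN;\chi)$. Interchanging the $\pi$ and $\chi$ summations, this shows the left-hand side of \cref{cor:per-avg} equals $\tfrac{1}{N(\frakN)}$ times the left-hand side of the theorem.

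To finish I would equate this with the right-hand side of \cref{cor:per-avg}, multiply through by $N(\frakN)$, substitute $m(\calO)=m(\frakN_1,\frakN_2,\frakM)$ from \cref{mass-form}, and use the elementary rearrangement $N(\frakN)\big(h_K\langle c_K,a(\frakm)\rangle-\delta^+(\calO,\frakm)\deg T_\frakm\,\tfrac{h_K^2|\Cl^+(N(\hat\calO))|}{m(\calO)}\big)=N(\frakN)h_K\big(\langle c_K,a(\frakm)\rangle-\delta^+(\calO,\frakm)\deg T_\frakm\,\tfrac{h_K|\Cl^+(N(\hat\calO))|}{m(\frakN_1,\frakN_2,\frakM)}\big)$ to obtain the stated identity. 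There is no analytic difficulty left at this stage: the substance is already contained in \cref{cor:per-avg} (the column orthogonality and Brandt-matrix computation of \cref{sec:per}) and in \cref{lval:rel} (the relative trace formula of \cite{MW} together with the oldform spectral distributions of \cref{sec:loc-spec}). The only point that really requires care is the bookkeeping in the second paragraph: verifying that the spectral support of $S(\calO)$ is transported by Jacquet--Langlands precisely onto $\mathcal F(\frakN)$, that the Tunnell--Saito / \cref{pichi-adm} Hom-space condition corresponds to membership in $\mathcal F(\frakN;\chi)$, and that the characters $\chi$ lying outside this admissible set — in particular those nontrivial on $\A_F^\times$, for which $\mathcal F(\frakN;\chi)$ is empty — contribute nothing to either side.
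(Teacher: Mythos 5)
Your proposal is correct and follows essentially the same route as the paper, whose proof simply combines \cref{lval:rel}, \cref{cor:per-avg} and \cref{JL}, noting that $P_{K,\chi}$ vanishes on $\pi_B$ whenever $\Hom_{\A_K^\times}(\pi_B,\chi)=0$. Your write-up merely makes explicit the spectral decomposition, the Hecke-equivariant reindexing, and the mass-formula substitution that the paper leaves implicit.
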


\begin{proof}
Since the periods $P_{K,\chi}$ automatically vanish on $\pi_B$ if 
$\Hom_{\A_K^\times}(\pi_B, \chi) = 0$, this theorem follows directly
from \cref{lval:rel}, \cref{cor:per-avg} and \cref{JL}.
\end{proof}

\begin{rem}
By the $\eps$-factor criteria of Tunnell and Saito discussed earlier,
note that $\pi \in \mathcal F(\frakN; \chi)$ implies that
$\pi_K \otimes \chi$ has root number $+1$.  (The converse is not typically 
true.)  Thus the restriction to such $\pi$ precludes one
trivial reason for $L(\frac 12, \pi_K \otimes \chi)$ to vanish, the root number
over $K$ being $-1$.  There
is also a trivial way for this central value to vanish if the root number of
$K$ is $+1$---if $L(s, \pi_K \otimes \chi) = L(s, \tau_1) L(s,\tau_2)$ 
factors as two degree 2 $L$-functions over $F$ with root number $-1$.
For instance, if $\chi$ is trivial, then $L(s,\pi_K) = L(s, \pi)L(s, \pi \otimes \eta_{K/F})$
so $L(\frac 12, \pi_K)$ is forced to vanish if $\pi$ has root number $-1$.
See also \cite[Remark 6.10]{me:zeroes} regarding a factorization when 
$\chi$ is quadratic.
\end{rem}

As in \cref{sec:per}, this double average formula simplifies in a variety
of situations, can be used to obtain bounds not involving
$\calO$, and yields exact bounds on single averages.  Under suitable
conditions, we can also pick out the average restricted to newforms.
For simplicity, we only carry this out over $\Q$ below.

\subsection{Consequences over $\Q$} 
We now assume $F=\Q$, and explain how to deduce the double average value 
formula \cref{main-thm:Q} from \cref{main-thm}. 
Write $N, N_*, M, D_B$ in place of
$\frakN, \frakN_*, \frakM, \frakD_B$, where the roman types now
represent positive
generators of their fraktur counterparts.  If $\pi$ is an automorphic
representation corresponding to a weight 2 newform $f$, we also write
the factors $\Lambda_*(\pi,\chi)$ as $\Lambda_*(f,\chi)$.

\begin{proof}[Proof of \cref{main-thm:Q}]
Let $f \in \mathcal F(N; \chi)$ be a newform level $N_f \mid N$
and $\pi$ be the associated cuspidal representation of $\GL_2(\A_\Q)$.
To translate our automorphic $L$-values to classical quantities
first note that by \cite[Theorem 5.1]{hida}, we have
\[ L^{S_2(\pi)}_\fin(1, \pi, \Ad) = \frac{8 \pi^3}{N_f} (f, f), \]
where $(f, f) = (f,f)_{N_f} = \int_{\Gamma_0(N_f) \bs \mathfrak H} |f(x+iy)|^2 \, dx \, dy$ 
denotes the usual Petersson norm on $X_0(N_f)$, and
$S_2(\pi)$ denotes the set of finite primes $p$ where $c(\pi_p) \ge 2$.
From the calculations in the proof of \cref{loc-lfac}, we have
\[ L_\fin(1, \pi, \Ad) = \Lambda_{N_2}(f)^{-1} L^{S_2(\pi)}_\fin(1, \pi, \Ad), \]
where $\Lambda_{N_2}(f) = \prod_{p | N_2} \Lambda_{p}(f)$ and 
\begin{equation} \label{lambdaN2}
\Lambda_p(f) = \Lambda_p(f,\chi) = 
\begin{cases}
{1+p^{-1}} & p | N_2, \, \pi_p \text{ supercuspidal}, \\
{1-p^{-2}} & p | N_2, \, \pi_p \text{ ramified twist of } \St_p, \\
1 & p | N_2, \,  \pi_p \text{ unramified twist of } \St_p.
\end{cases}
\end{equation}
Also note that for $v | \infty$,
\[ \frac{L(\frac 12, \pi_{K,v} \otimes \chi_v)}{L(1,\pi_v, \Ad)} = 2\pi . \]
Thus 
\begin{equation} \label{lval-translate} 
\frac{L(\frac 12, \pi_K \otimes \chi)}{L(1, \pi, \Ad)} =
\frac {N_f}{4\pi^2} \Lambda_{N_2}(f) \frac{L_\fin(\frac 12, f, \chi)}{(f,f)}.
\end{equation}
Now \cref{main-thm:Q} follows from \cref{main-thm} together with \cref{EisO1}.  
\end{proof}

In the above proof, \eqref{lval-translate} means we can rewrite \eqref{eq:lval} as
\begin{equation} \label{eq:lvalQ}
 \sum_{\phi \in \Phi_\pi} \frac{|P_{K,\chi}(\phi)|^2}{(\phi,\phi)} 
= \frac{C(K; N)}{4\pi^2} \Lambda_N(f, \chi)  \frac{N_f}N \frac{L_\fin(\frac 12, f, \chi)}{(f,f)}.
\end{equation}
For some of our remaining results on $L$-value averages, we will use this 
together with the results of \cref{sec:per}.

For explicit calculations for later use, we note that we can express the factors 
$\Lambda_p(f,\chi)$ for $p \mid M$, $p \nmid N_f$ from \eqref{lambdaM} 
in terms of Fourier coefficients $a_p(f)$ as
\begin{equation} \label{lambdaMQ}
 \Lambda_p(f, \chi) = \frac 1{1+p^{-1}} \left( 1 + 
\frac{|1+\chi(\frakp)(\chi(\frakp)-a_p(f))p^{-1}|^2}{1+2p^{-1}+(1-a_p(f)^2)p^{-2}}
\right)
\qquad (\text{for } p \mid M, \, p \nmid N_f),  
\end{equation}
where $\frakp$ denotes a prime of $K$ above $p$.

\begin{lemma} \label{lem:wi}
 Let $\calO \subset B$ be a special order of nice level type
$(N_1, N_2, M)$, and set $N=N_1 N_2 M$.  
Then $\# \calO^\times \le 6$ unless $N \le 4$.
\end{lemma}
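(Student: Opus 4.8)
The plan is to combine the classification of finite subgroups of $\mathbf H^\times$ with a reduced discriminant comparison. First I would note that, because $B$ is definite, $N_{B/\Q}$ is positive definite on $B^\times$, so every $u\in\calO^\times$ has $N_{B/\Q}(u)=1$ and hence satisfies $u^{2}-\tr(u)u+1=0$ with $\tr(u)\in\Z$; viewing $u$ as a unit quaternion at the archimedean place forces $\tr(u)\in\{-2,-1,0,1,2\}$, so every element of $\calO^\times$ has multiplicative order in $\{1,2,3,4,6\}$. Thus $\calO^\times$ is a finite subgroup of $B_\infty^\times\cong\mathbf H^\times$ containing $-1$ and with no element of order $5$ or of order $\ge 7$. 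Running through the classification of finite subgroups of $\mathbf H^\times$ (cyclic $C_{2n}$, binary dihedral $Q_{4n}$, and the binary tetrahedral, octahedral, and icosahedral groups $2T$, $2O$, $2I$), the order restriction leaves exactly $C_{2}$, $C_{4}$, $C_{6}$, $Q_{8}$, $Q_{12}$, $2T$. Hence $\#\calO^\times\in\{2,4,6,8,12,24\}$, and it remains only to show that $\#\calO^\times\in\{8,12,24\}$ forces $N\le 4$.

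So suppose $\Gamma:=\calO^\times$ is $Q_{8}$, $Q_{12}$, or $2T$. Since $\Gamma$ is nonabelian, the $\Q$-subalgebra it spans is a noncommutative subalgebra of $B$, hence all of $B$, and $\calO':=\Z\langle\Gamma\rangle$ is an $\frako$-order with $\calO'\subseteq\calO$. I would then identify $\calO'$ in each case: for $\Gamma=2T$ it is the Hurwitz order, a maximal order of the quaternion algebra ramified at $\{2,\infty\}$, with reduced discriminant $(2)$; for $\Gamma=Q_{8}$ it is the Lipschitz order $\Z\langle i,j\rangle$ in that same algebra, with reduced discriminant $(4)$; and for $\Gamma=Q_{12}=\langle\zeta_{6},b\rangle$ (where $b^{2}=-1$ and $b\zeta_{6}b^{-1}=\zeta_{6}^{-1}$) one checks that $\{1,\zeta_{6},b,\zeta_{6}b\}$ is a $\Z$-basis of $\calO'$ and that $\det\big(\tr(e_{i}e_{j})\big)=-9$, so $\calO'$ has reduced discriminant $(3)$ and is a maximal order of the algebra ramified at $\{3,\infty\}$. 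Since $\calO'\subseteq\calO$ forces $\mathfrak d(\calO)\mid\mathfrak d(\calO')$, and since a special order of level $\frakN$ has reduced discriminant $\frakN$ (a purely local computation, cf.\ \cite{HPS}), we obtain $N\mid 4$, $N\mid 3$, or $N\mid 2$ respectively; in every case $N\le 4$, which finishes the proof.

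The hard part will be the $\Gamma=Q_{12}$ case: checking that the order generated by a binary dihedral group of order $12$ is already maximal, so that its reduced discriminant is $(3)$ rather than something larger (the $Q_{8}$ and $2T$ cases are classical). An alternative to the discriminant bound would be to compare the mass formula of \cref{mass-form} with the finitely many possibilities for the relevant class number, but pinning down $\calO'$ directly is cleaner. It is worth noting that the bound is sharp: the Lipschitz order is a special order of level type $(1,4,1)$ with $\#\calO^\times=8$, and the maximal orders in the algebras of discriminant $2$ and $3$ have $24$ and $12$ units, so $N\in\{2,3,4\}$ are genuine exceptions.
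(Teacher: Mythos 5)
Your proof is correct, but it takes a genuinely different route from the paper's. The paper first passes to a maximal order containing $\calO$ to conclude that $\#\calO^\times>6$ forces $D_B\in\{2,3\}$, disposes of $D_B=3$ and of maximal orders directly, and is then left with the case $D_B=2$, $\calO$ non-maximal, $\calO^\times\simeq Q_8$; that residual case is handled by optimal-embedding counts for $\Z[i]$ (via \cref{lem:loc-emb}, forcing $N_1N_2\in\{2,4\}$ and, after enlarging to an Eichler order of level $2p$, $p\equiv 1 \bmod 4$) together with the mass formula (\cref{mass-form}) and the Eichler class number formula to show no $w_i=4$ can occur. You instead classify the possible unit groups outright (torsion orders in $\{1,2,3,4,6\}$ plus the finite subgroups of $\H^\times$ give only $Q_8$, $Q_{12}$, $2T$ beyond order $6$), and for each exceptional group you pin down the $\Z$-order it generates (Lipschitz, the maximal order of discriminant $3$, Hurwitz) and compare reduced discriminants, using that a special order of level type $(N_1,N_2,M)$ has reduced discriminant $N$; your Gram-matrix computation for $Q_{12}$ is right ($\det=-9$, so that order is maximal in the algebra ramified at $\{3,\infty\}$), and the divisibility $\mathfrak{d}(\calO)\mid\mathfrak{d}(\calO')$ immediately rules out, e.g., Eichler orders of level $2p$ containing $Q_8$ --- the very case the paper spends the most effort on. What your approach buys is a shorter, more structural argument with no appeal to embedding numbers or class/mass formulas, and as a bonus it identifies exactly which unit groups occur in the exceptional levels $N\in\{2,3,4\}$; what it costs is the input of the finite-subgroup classification of $\H^\times$ and the identification of level with reduced discriminant for special orders (true, and a routine local computation in the spirit of \cite{HPS}, but a step you should verify or cite precisely, since the paper's conventions define the level as the minimal $r$ with $\calO_v\simeq\calO_r(E_v)$).
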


\begin{proof}
Suppose $\# \calO^\times > 6$.  Note that if $\calO' \supset \calO$, then $\# (\calO')^\times \ge \# \calO^\times$.
Moreover $\# \calO_B^\times \le 6$ for every maximal order $\calO_B \subset B$
unless $D_B \in \{2, 3 \}$, in which case $\calO_B$ is unique up to isomorphism.
When $D_B = 2$, $\# \calO_B^\times = 24$, and when $D_B = 3$, 
$\# \calO_B^\times = 12$.  In both cases, the $\Z$-order in $B$ generated by
$\calO_B^\times$ is all of $\calO_B$.  Hence the only possibility with $N > 3$ is that
$D_B = 2$ and $\calO$ is a non-maximal order
in $B$.  Assume this.

Then $\calO^\times$ is a proper subgroup of 
$\calO_B^\times \simeq \mathrm{SL}_2(\F_3)$ of order greater than 6, 
and the only possibility is $\# \calO^\times = 8$.
Necessarily $\Z[i]$ embeds in the genus of $\calO$, so by \cref{lem:loc-emb}
we must have that $N_1 N_2$ is 2 or 4.  If $N=4$, then there is a unique order up to
isomorphism and $\# \calO^\times = 8$ does occur.

Suppose $N > 4$.  Then $M > 1$, and by enlarging
$\calO$ if necessary we may suppose $M = p$ and $N=2p$.  
Again by \cref{lem:loc-emb},
we must have $p \equiv 1 \mod 4$.  From the mass formula (\cref{mass-form}), 
$m(\calO) = \frac{p+1}{12}$.
By the class number formula, $h(\calO) = m(\calO) + \frac 76$ or $h(\calO) = m(\calO) +
\frac 12$, depending on whether $p \equiv 1 \mod 3$ or $p \equiv 2 \mod 3$.
Correspondingly $\sum (1 - \frac 1{w_i})$ is either $\frac 76$ or $\frac 12$.  In the
latter case, there must be exactly one $w_i > 1$, and it must be 2, which implies
$\# \calO^\times \le 4$.  So consider the former case.  Then there must be exactly two 
$w_i$'s such that $w_i > 1$, and their values must be 2 and 3.  In
particular, $w_i = 4$ is impossible.
\end{proof}

\begin{lemma}  \label{lem:stab}
Suppose  $D_B > |D_K|$ with $\gcd(D_B, D_K) = 1$.  
Then for any order $\calO \subset B$ containing $\frako_K$,
the pair $(\calO, K)$ lies in the stable range.
\end{lemma}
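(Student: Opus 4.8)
The plan is to recall that $(\calO,K)$ lying in the stable range means two things (from the definition just before \cref{cor:semistable}): first, that the ideal class embedding numbers satisfy $h_i \le 1$ for all $i$ (the semistable condition), and second, that whenever $h_i = 1$ we have $w_i = u_K$. I would handle these in turn, and in both cases the engine is the embedding number formula \eqref{eq:emb} together with the local computation in \cref{lem:loc-emb}, combined with the mass formula \cref{mass-form} and the hypothesis $D_B > |D_K|$.

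First I would address the semistable condition. The key observation is that $h_i$ counts elements $t \in \Cl(K)$ with $x(t) = x_i$; summing $h_i$ over those $i$ with $h_i > 0$ recovers $h_K$, while summing (for a fixed embedding $\frako_K \hookrightarrow \calO$) the quantity $\#\Emb(\frako_K, \calO_\ell(\calI_i))$ over \emph{all} $i$ gives, by \eqref{eq:emb}, $h_K \prod_{v|\frakN}\#\Emb(\frako_{K,v},\calO_v)$. Under the admissibility conditions (\cref{Kconds}) each local embedding number is nonzero, and when $K$ is inert at $v$ it is $1$ or $2$; the point is that a single global embedding $\frako_K \subset \calO_\ell(\calI_i)$ contributes to one class $x_i$. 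I would argue that if some $h_i \ge 2$, then $\Cl(\calO)$ would have to be ``too small'' relative to $h_K$ — more precisely, I would bound $n = h(\calO) = \#\Cl(\calO)$ from below using the Eichler mass formula, $n \ge m(\calO) \ge \frac{1}{w_B} n$ roughly, and from above the number of ideal classes hit by the class map $\Cl(K) \to \Cl(\calO)$ is at most $n$; if every fiber had size $\ge 2$ somewhere while the total is $h_K$, one needs $n$ small, but $m(\calO)$ grows with $D_B$ (it contains the factor $|\zeta_\Q(-1)| D_B \prod_{p|D_B}(1-p^{-1}) \asymp D_B$ up to the $w_i$-fudge), whereas $h_K \ll |D_K|^{1/2+\eps}$. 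The hypothesis $D_B > |D_K|$ (with $\gcd(D_B,D_K)=1$ ensuring the embedding conditions at primes dividing $D_B$ are about inertness, not ramification) should force $h_i \le 1$.

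Second, for the stable (not just semistable) condition I would show that when $h_i = 1$, the left order $\calO_\ell(\calI_i)$ has unit group exactly $\frako_K^\times$, i.e.\ $w_i = u_K$. Since $h_i > 0$ forces $\frako_K \subset \calO_\ell(\calI_i)$, we have $u_K \mid w_i$; the reverse requires that no larger unit group occurs, equivalently that $\calO_\ell(\calI_i)^\times$ is not strictly bigger than $\frako_K^\times$. By \cref{lem:bal}, a strictly larger unit group would force some $\frako[u]$ with $u$ a root of unity of order $\ge 3$ or $u = \sqrt{-\eps}$ to embed in $\calO_\ell(\calI_i) \subset$ genus of $\calO$; I would show such embeddings are excluded by a counting/mass argument in the regime $D_B > |D_K|$, or alternatively argue directly that such an auxiliary CM field $K'$ would itself be ``inert-type admissible'' for $B$ with small discriminant, again contradicting $D_B > |D_K| \ge$ (something) — here one uses $\gcd(D_B, D_K) = 1$ to keep the arithmetic of $K$ and $K'$ separate. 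Then I would conclude by invoking the definition.

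The main obstacle I anticipate is making the ``$D_B$ large forces small fibers'' heuristic into a clean inequality: one must compare $h_K$ (or more precisely the discrepancy $\sum w_i h_i - h_K = \sum (w_i - 1) h_i$, or the fiber sizes) against the mass/class number of $\calO$, and the presence of the weights $w_i$ — which can be as large as $w_B \le 3$ (or larger for $D_B \in \{2,3\}$, excluded here since $D_B > |D_K| \ge 3$) — complicates a naive pigeonhole. I expect the cleanest route is to cross-reference \cite{MR} and \cite{FW}: as the introduction explains, the authors \emph{use} those stable-average formulas to certify that $D_B > |D_K|$ lands in their notion of stable range, so the honest proof of \cref{lem:stab} may actually proceed by reducing to the Eichler-order case $\calO = \calO_{D_B}$ maximal (via \cref{sub-prop}, stability for a maximal order descends to suborders) and then quoting the known stable formula to deduce $h_i \le 1$ and $w_i = u_K$ there.
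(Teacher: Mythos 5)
Your closing paragraph is, in fact, the paper's proof: one does \emph{not} argue directly, but reduces to a maximal order $\calO_B \supset \frako_K$ and quotes the stable average value formula \cite[Theorem 6.10]{FW}. Comparing that exact value with \cref{cor:semistable} (and \cref{lem:stab1}) shows $\langle c_{K,\chi}, c_{K,\chi}\rangle$ is independent of $\chi$, hence $h_i \le 1$, and then $\sum_{h_i=1} w_i = u_K h_K$ together with $u_K \mid w_i$ (so $w_i \ge u_K$) forces $w_i = u_K$; finally \cref{sub-prop} transfers (semi)stability from $\calO_B$ to any suborder containing $\frako_K$. So the hedge at the end of your proposal is exactly right, and if you commit to it the proof is complete in a few lines.

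The primary route you sketch, however, has a genuine gap and would not go through. The mass/class-number comparison ($m(\calO) \asymp D_B$ versus $h_K \ll |D_K|^{1/2+\eps}$) says only that $\Cl(\calO)$ is much larger than $\Cl(K)$; it gives no control whatsoever on the fibers of the class map $\Cl(K)\to\Cl(\calO)$, since a map into a large finite set can still be badly non-injective, and neither \eqref{eq:emb} nor \cref{lem:loc-emb} tells you \emph{which} classes $x_i$ absorb the embeddings. The statement $h_i\le 1$ for $D_B>|D_K|$ is precisely the nontrivial stability phenomenon of \cite{MR} and \cite{FW}, proved there by trace-formula/height computations, and the paper explicitly imports it rather than reproving it (this is stated in the introduction's outline). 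Your second step is also misaimed: \cref{lem:bal} rules out \emph{any} unit outside $\frako^\times$, but when $u_K>1$ the ring $\frako_K$ itself is one of the excluded rings ($\Z[i]$ or $\Z[\zeta_3]$), so ``balanced'' is the wrong target; what is needed is that no unit group strictly containing $\frako_K^\times$ occurs at a class in the image, i.e.\ $w_i=u_K$ exactly, and in the paper this again falls out of matching the semistable formula of \cref{cor:semistable} against the FW value $u_K h_K$ rather than from an embedding or mass count.
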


\begin{proof}
The stable average value formula \cite[Theorem 6.10]{FW} together with
\cref{cor:semistable} implies
that $(\calO_B, K)$ is in the stable range for any maximal order
$\calO_B$ containing $\frako_K$.  Now apply \cref{sub-prop}.
\end{proof}

\begin{proof}[Proof of \cref{thm2}]
We want to relate double averages over $\mathcal F(N)$ to 
double averages of $\mathcal F_0(N)$.  We do this by subtracting away the
contribution of the $N_1N_2$-oldforms using inclusion-exclusion.

For a subset $\Sigma$ of primes dividing $D_B^{-1} N_1 N_2$ at which we want
to lower the level, define $N^\Sigma =
N_1^\Sigma N_2^\Sigma M$ as follows.  First put $N_2^\Sigma = \prod p^2$,
where $p$ runs over primes dividing $N_2$ such that $p \not \in \Sigma$.  
Then set $N_1^\Sigma = \prod p^{\ord_p(N_1)} \prod q^{\ord_p(N_1)-2} \prod r$,
where $p$ runs over the primes dividing $N_1$ not in $\Sigma$, and $q$
(resp.\ $r$) runs over the primes dividing $N_1$ (resp.\ $N_2$) which are in $\Sigma$.

Let
\[  A_0(N_1, N_2) =  \frac{C(K; N)}{4 \pi^2} \sum_{\chi \in \widehat{\Cl}(K)}   \sum_{f \in \mathcal F_0(N; \chi)} \Lambda_N(f,\chi) N_f \frac{L_\fin(\frac 12, f,  \chi)} {(f,f)}, \]
and $A(N_1, N_2)$ denote the corresponding expression where the sum over
$f \in \mathcal F_0(N; \chi)$ is replaced by the sum over $f \in \mathcal F(N; \chi)$.
Then
\[ A_0(N_1, N_2) =  \sum_\Sigma (-1)^{\# \Sigma} \left(\prod_{p | N_2, p \in \Sigma} \frac 1{1+p^{-1}} \right) A(N_1^\Sigma, N_2^\Sigma), \]
where $\Sigma$ runs over all subsets of $\{ p : p | D_B^{-1} N_1 N_2 \}$.
It follows from \cref{sub-prop,lem:wi,lem:stab},
that any of the conditions (1)--(3) in the initial assumption of \cref{thm2} 
imply the equality $\sum h_i = u_K h_K$ for any
order $\calO \supset \frako_K$ of nice level type $(N_1^\Sigma, N_2^\Sigma, M)$.  
Now from \cref{main-thm:Q}, we have
\[ A(N_1^\Sigma, N_2^\Sigma) = N^\Sigma h_K^2 (u_K - 2^{\omega'(N_2^\Sigma)}c(N_1^\Sigma, N_2^\Sigma, M) ). \]
Then the theorem follows from the facts that
\[ \sum_\Sigma (-1)^{\# \Sigma} N^\Sigma \left(\prod_{p | N_2, p \in \Sigma} \frac 1{1+p^{-1}} \right)
= N \prod_{p | N_1'} \left( 1- \frac 1{p^2} \right) \cdot 
\prod_{p | N_2} \frac p{p+1}, \]
and
\[ \sum_\Sigma (-1)^{\# \Sigma} N^\Sigma \left(\prod_{p | N_2, p \in \Sigma} 
\frac 1{1+p^{-1}} \right) {2^{\omega'(N_2^\Sigma)}}{c(N_1^\Sigma, N_2^\Sigma, M)} 
 \]
is 0 unless $N_1' = 1$ and $N_2$ is odd, in which case it is
\[  N{c(N_1, N_2, M)} = 12 \prod_{p | N_1 N_2} \frac 1{1-p^{-1}}
\prod_{p | N_2 M} \frac 1{1+p^{-1}}. \]
\end{proof}

\begin{proof}[Proof of \cref{cor:stable}]
Keeping the notation of the previous proof,
define $A^\chi(N_1)$ and $A_0^\chi(N_1)$
to be the ``$\chi$-parts'' of $A(N_1, 1)$ and $A_0(N_1, 1)$---i.e., 
the analogous expressions with only
a single sum over $f$ in $\mathcal F(N; \chi)$ or $\mathcal F_0(N; \chi)$
for a fixed $\chi$.  Then
\[ A_0^\chi(N_1) = \sum_\Sigma (-1)^{\# \Sigma} A^\chi(N_1^\Sigma) \]
By \eqref{eq:lvalQ} and \cref{cor:semistable}, in the stable range we have
\[ A^\chi(N_1^\Sigma) = N^\Sigma (u_K h_K - \delta_{\chi,1} {h_K^2}{c(N_1^\Sigma,1,M)} ). \]
Then the formula follows by the calculations from the previous proof.
\end{proof}

\subsection{Lower bounds and effective nonvanishing} \label{sec:lowbds}
We keep to the situation of $F=\Q$ and obtain exact lower bounds
for single averages of $L$-values in special situations.  In particular, this will 
give the effective nonvanishing results stated in the introduction.

We first remark that, when $N=N_1 > 4$ is squarefree,  \eqref{eq:lvalQ} and
\cref{cor:semistable} imply
\begin{align} \label{eq:lower-bound1}
 \frac{C(K; N)}{4\pi^2} \sum_{f \in \mathcal F^\new(N)} \frac{L_\fin(\frac 12, f)L_\fin(\frac 12, f \otimes \eta_K)}{(f, f)} &= \sum_{i=1}^n w_i h_i - h_K^2 c(N_1, 1, 1) \\
 &\ge  h_K( u_K - \frac{12 h_K}{\phi(N)}). \nonumber
 \end{align}
 Here $\phi(N)$ is the Euler totient function.
 In particular, we get that $L(\frac 12, f)L(\frac 12, f \otimes \eta_K) \ne 0$ for some
 $f \in S_2^\new(N)$ as soon as $\phi(N) > \frac{12 h_K}{u_K}$. 

\begin{prop} \label{lowbdN}
Fix a prime $p$ and let $r > 1$.  Let $N = N_0 p^r \ge 11$,
where $N_0$ is a squarefree product of an even number of primes (possibly $N_0=1$)
coprime to $p$.

\begin{enumerate}[(i)]
\item
If $r$ is odd, $N \ne 27$ and $K/\Q$ is $(N,1,1)$-admissible, we have
\[  \frac{C(K; N)}{4\pi^2} \sum_{f \in \mathcal F^\new(N)} \frac{L_\fin(\frac 12, f)L_\fin(\frac 12, f \otimes \eta_K)}{(f, f)} \ge h_K (u_K - \frac{3h_K}{p^2} ). \]

\item
If $r = 2$  and $K/\Q$ is $(N_0,p^2,1)$-admissible, we have
\[  \frac{C(K; N)}{4\pi^2} \sum_{f \in \mathcal F^\new(N)} \Lambda_p(f) \frac{L_\fin(\frac 12, f)L_\fin(\frac 12, f \otimes \eta_K)}{(f, f)} \ge h_K (u_K - \frac {3h_K}{p+1} (1 + \frac 4{(p-1)\phi(N_0)} ) ). \]
\end{enumerate}
\end{prop}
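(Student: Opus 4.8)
The plan is to use the period--$L$-value dictionary \eqref{eq:lvalQ} to turn each inequality into a lower bound for a sum of squares of quaternionic periods, and then feed in the results of \cref{sec:per}. In both cases let $B$ be the definite quaternion algebra of discriminant $D_B=\prod_{q\mid N}q$ and let $\calO\subset B$ be a special order with $\frako_K\subset\calO$: of level type $(N,1,1)$ when $r$ is odd, and of level type $(N_0,p^2,1)$ when $r=2$. The admissibility hypotheses on $K$ (inert or ramified at the primes of $N$, ramified at $p$ when $r=2$, inert at $p$ when $r>2$) guarantee by \cref{Kconds} that such $\calO$ and embeddings exist. Since $L_\fin(\tfrac12,f)L_\fin(\tfrac12,f\otimes\eta_K)=L_\fin(\tfrac12,f,1_K)\ge 0$ for every newform, and a newform lying in $\mathcal F^\new(N)$ but not in $\mathcal F^\new(N;1_K)$ contributes a nonnegative term, it suffices to bound below the subsum over $f\in\mathcal F^\new(N;1_K)$ (weighted by $\Lambda_p(f)$ when $r=2$); by \eqref{eq:lvalQ} this subsum equals $\sum_\phi|P_{K,1_K}(\phi)|^2/(\phi,\phi)$ over an orthogonal basis of the cuspidal quaternionic space of the relevant order.

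For $r$ odd I would first use \cref{JL} to identify $S(\calO)$, Hecke-equivariantly and with multiplicity one since $N_2=1$, with $\bigoplus_{1\le j\le r,\ j\ \mathrm{odd}}$ of the level-$N_0p^j$ newspace. Write $\calO_j$ for the special order of level type $(N_0p^j,1,1)$ with $\frako_K\subset\calO_j$, and set $L_j=\tfrac{C(K)}{4\pi^2}\sum_f L_\fin(\tfrac12,f,1_K)/(f,f)$ with $f$ running over $\mathcal F^\new(N_0p^j;1_K)$; here $C(K;N_0p^j)$ is a constant $C(K)$ independent of $j$, because the $N_2$-factor in $C(K;\cdot)$ is trivial and $K$ is inert at $p$. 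A newform of level $N_0p^i$ occurs in $S(\calO_j)$ (for $i\le j$, both odd) with a one-dimensional space of $\hat\calO_j^\times$-invariants, and by \eqref{eq:lvalQ} applied to $\calO_j$ — in which the normalizing factor is $N_f/(N_0p^j)=p^{i-j}$ — a basis $\Phi^{(j)}$ of $S(\calO_j)$ satisfies the triangular relation $S_j:=\sum_{\phi\in\Phi^{(j)}}|P_{K,1_K}(\phi)|^2/(\phi,\phi)=\sum_{i\le j,\ i\ \mathrm{odd}}p^{i-j}L_i$. Inverting this system gives in particular $L_r=S_r-p^{-2}S_{r-2}$. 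Now \cref{cor:semistable}, with $m^+(\calO_j,1)=1$ since $\calO_j$ is of unramified quadratic type, gives $S_j=\langle c_K^{(j)},c_K^{(j)}\rangle-h_K^2/m(\calO_j)$, and \cref{mass-form} yields $p^{-2}m(\calO_{r-2})^{-1}=m(\calO_r)^{-1}$, so the Eisenstein contributions cancel: $L_r=\langle c_K^{(r)},c_K^{(r)}\rangle-p^{-2}\langle c_K^{(r-2)},c_K^{(r-2)}\rangle$. Writing $\langle c_K^{(j)},c_K^{(j)}\rangle=\sum_i w_i(h_i^{(j)})^2$ with $h_i^{(j)}$ nonnegative integers summing to $h_K$, I would bound the first term below by $\sum_i w_ih_i^{(r)}\ge u_Kh_K$ and the second above by $h_K\sum_i w_ih_i^{(r-2)}\le 3h_K^2$, using $w_i\le 3$ from \cref{lem:wi}; the hypotheses $N\ge 11$ and $N\ne 27$ are precisely what keep the sublevel $N_0p^{r-2}$ larger than $4$, so that \cref{lem:wi} applies. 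This gives $L_r\ge h_K(u_K-3h_K/p^2)$, which translates back through \eqref{eq:lvalQ} to the asserted inequality.

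For $r=2$ the argument is shorter. By \cref{JL}, with $\calO$ of level type $(N_0,p^2,1)$ the space $S(\calO)$ consists only of level-$N$ newforms that are minimal supercuspidal at $p$ (with multiplicity two) or ramified twists of Steinberg at $p$ (with multiplicity one); there are no oldforms, so no inclusion--exclusion is needed, and \eqref{eq:lvalQ} directly gives $\sum_{\phi\in\Phi_0}|P_{K,1_K}(\phi)|^2/(\phi,\phi)=\tfrac{C(K;N)}{4\pi^2}\sum_f\Lambda_p(f)L_\fin(\tfrac12,f,1_K)/(f,f)$, the weight $\Lambda_p(f)$ absorbing the Jacquet--Langlands multiplicity and the passage between $(f,f)$ and $L(1,f,\Ad)$. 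Applying \cref{cor:semistable} and \cref{mass-form}, this is at least $\sum_i w_ih_i-m^+(\calO,1)h_K^2/m(\calO)$, where $\sum_i w_ih_i\ge u_Kh_K$, $m^+(\calO,1)\le 2$, and $m(\calO)=\phi(N_0)(p^2-1)/12$. Since $\tfrac{2}{m(\calO)}=\tfrac{3}{p+1}\cdot\tfrac{4}{\phi(N_0)(p-1)}+\tfrac{12}{\phi(N_0)(p^2-1)}$ and the hypotheses (an even number of prime factors of $N_0$, together with $N\ge 11$) force $p\ge 5$, so that $\tfrac{4}{\phi(N_0)(p-1)}\le 1$, the stated inequality follows.

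The step I expect to be the main obstacle is the $r$-odd case: justifying the triangular system and its inversion $L_r=S_r-p^{-2}S_{r-2}$ rests on the precise description of the quaternionic spaces at $p$ supplied by \cite{me:basis} through \cref{JL} — both the one-dimensionality of the relevant local invariants and the $N_f/N$ normalization built into \eqref{eq:lvalQ} — and one must also accept that the crude estimate $\langle c_K^{(r-2)},c_K^{(r-2)}\rangle\le 3h_K^2$ cannot be improved without a stable-range hypothesis, which is the whole point of the proposition. The cancellation of the Eisenstein terms, though clean, likewise relies on the exact shape of \cref{mass-form}, and the role of the exclusion $N\ne 27$ (and of small $p$) has to be traced carefully through \cref{lem:wi}.
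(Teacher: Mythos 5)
Your argument for odd $r$ is essentially the paper's own proof: there it is phrased as $A_0(N)=A(N)-A(N/p^2)$, with $A(\cdot)$, $A_0(\cdot)$ as in the proof of \cref{thm2}, a lower bound for $A(N)/N$ and an upper bound for $A(N/p^2)/(N/p^2)$ from \cref{cor:semistable} (with $w_i\le 3$ from \cref{lem:wi}, which is where $N\ge 11$, $N\ne 27$ enter), and the Eisenstein terms cancel because $m(\calO)=p^2\,m(\calO')$ --- exactly your $L_r=S_r-p^{-2}S_{r-2}$ computation. One small misattribution: the identity $S_j=\langle c_K^{(j)},c_K^{(j)}\rangle-h_K^2/m(\calO_j)$ is not the statement of \cref{cor:semistable} (which gives bounds); it follows from \eqref{eq:phi-avg} with $T$ the identity together with the Eisenstein computation in the proof of \cref{cor:per-avg}.

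The $r=2$ case, however, has a genuine gap. It is false that $S(\calO)$ for $\calO$ of level type $(N_0,p^2,1)$ contains no oldforms: in \cref{JL} the pair $(\frakb,\frakc)=(\frako,\frakp)$ also occurs, contributing with multiplicity one the newforms of level $N_0p$ that are \emph{unramified} twists of Steinberg at $p$. Locally such a form corresponds on $B_p^\times$ to a character $\mu_p\circ N$ with $\mu_p$ unramified, which is trivial on $\calO_2(E_p)^\times$, so these representations do have $\hat\calO^\times$-invariants, and those satisfying the sign condition of \cref{pichi-adm} have nonvanishing period $P_{K,1_K}$. Hence the period sum over $S(\calO)$ equals your weighted sum over level-$N$ newforms \emph{plus} a nonnegative oldform contribution, and a lower bound for the period sum gives no lower bound for the newform sum alone. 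The symptom is that your intermediate bound $u_Kh_K-2h_K^2/m(\calO)$ is stronger than the proposition's, whose dominant correction term $3h_K/(p+1)$ arises precisely from the forms you dropped. The paper repairs this by the same inclusion--exclusion as in the odd case: $A_0(N)=A(N)-\frac1{1+p^{-1}}A(N/p)$, with $A(N)/N$ bounded below via \cref{cor:semistable} (where $m^+(\calO,1_K)=1$, since the only possible nontrivial $\mu$ is $\eta_E$ for a real field $E$ ramified only at $p$, and $\eta_E\circ N_{K/\Q}\ne 1_K$ because $E\not\subset K$) and $A(N/p)/(N/p)$ bounded above by $3h_K^2$; the resulting terms $\frac{3h_K^2}{p+1}$ and $h_K^2/m(\calO)=\frac{12h_K^2}{\phi(N_0)(p^2-1)}$ give exactly the stated right-hand side.
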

 
\begin{proof}
We realize $N=N_1 N_2$ with our running conventions, so $M=1$ and $D_B = N_0 p$.
Let 
\[  A_0(N) = \frac{C(K; N)}{4 \pi^2} \sum_{f \in \mathcal F_0(N; 1_K)}  \Lambda_{N_2}(f) N_f \frac{L_\fin(\frac 12, f,  1_K)} {(f,f)}, \]
and $A(N)$ denote the corresponding sum over $f \in \mathcal F(N; 1_K)$.
(These are the $\chi=1_K$ parts of the sums denoted $A_0(N_1, N_2)$ and $A(N_1, N_2)$ in the previous section.)  Note that $\mathcal F_0(N; 1_K) \subset 
\mathcal F^\new(N)$, so by non-negativity of central $L$-values,
the left hand sides above are at least $\frac {A_0(N)}N$.  Hence it suffices to show the right
hand sides are lower bounds for $\frac{A_0(N)}N$.

We have $A_0(N) = A(N) - \frac 1{1+p^{-1}} A(N/p)$
or $A_0(N) = A(N) - A(N/p^2)$, according to whether $r=2$ or $r$ is odd.
As in the proof of \cref{thm2}, our restrictions on $N$
imply that, for a special order of level $N/p$ or $N/p^2$,
the quantity $\sum w_i h_i \le 3 h_K$.
Now the asserted upper bound on $\frac{A_0(N)}N$ follows from
using \cref{cor:semistable} for a lower bound on $A(N)$ and upper bounds
on $A(N/p)$ and $A(N/p^2)$.
\end{proof}

The $N_0=1$ case of this proposition, combined with \eqref{eq:lower-bound1},
yields \cref{cor:12}, and an analogue for levels $N=p^2$.  E.g., if $K$ is ramified
at $p \ge 7$, then one gets a non-vanishing result in level $p^2$ provided $p + 1 > \frac{5h_K}{u_K}$.

\begin{prop} \label{lowbdM}
Suppose $N_1 > 3$ is squarefree, $N_2 = 1$ and $M=p > 3$.
Assume $K/\Q$ is $(N_1, 1, p)$-admissible.
Then 
\[ \frac{C(K; N)}{4\pi^2} \sum_{f \in \mathcal F^\new(N)} \Lambda_p(f,1_K) \frac{L_\fin(\frac 12, f)L_\fin(\frac 12, f \otimes \eta_K)}{(f, f)} \ge h_K (u_K - \frac{3h_K \Xi(p)}p), \]
where $\Xi(p) := 2 \left( \frac{1+p^{-1/2}}{1-p^{-1}} \right)^2.$
\end{prop}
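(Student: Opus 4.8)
The plan is to follow the strategy of \cref{lowbdN}, isolating the exact-level-$N$ newforms inside the ``$\chi=1_K$ part'' of the double average attached to the Eichler order $\calO\subset B$ of level $p$ (so here $D_B=N_1$ and $M=p$), and then controlling the contribution of the $p$-oldforms, which now sit at level $N_1$. I would first record the structural facts: since $p\mid M$ forces $K$ to be split at $p$, we have $p\nmid D_K$ and hence $C(K;N)=C(K;N_1)$; and since $N_2=1$ with $N_1$ squarefree, $\mathcal F_0(N)=\mathcal F(N)$ consists precisely of the newforms of level $N_1$ or $N_1p$. Fix an embedding $\frako_K\subset\calO$, write $\ell(f):=L_\fin(\tfrac12,f)\,L_\fin(\tfrac12,f\otimes\eta_K)/(f,f)\ge 0$, and note that for $f\in\mathcal F^\new(N)$, which is automatically $p$-new, the weight $\Lambda_p(f,1_K)$ equals $1$.

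Next I would bound the order-level average from below. By \eqref{eq:lvalQ} and \cref{JL}, the sum $\sum_{\phi\in\Phi_0}|P_{K,1_K}(\phi)|^2/(\phi,\phi)$ over an orthogonal eigenbasis $\Phi_0$ of $S(\calO)$ equals $\tfrac1N A_0(N)$, where $A_0(N):=\tfrac{C(K;N)}{4\pi^2}\sum_{f\in\mathcal F_0(N;1_K)}\Lambda_p(f,1_K)N_f\,\ell(f)$ is the $\chi=1_K$ part of the double average; the lower bound of \cref{cor:semistable}, together with $m^+(\calO,1)=1$, $\sum w_ih_i\ge u_Kh_K$, and $m(\calO)^{-1}=c(N_1,1,p)=c(N_1,1,1)/(p+1)$, then gives $\tfrac1N A_0(N)\ge u_Kh_K-h_K^2\,c(N_1,1,1)/(p+1)$. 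Splitting $\mathcal F_0(N;1_K)$ into its exact-level-$N$ part (where $\Lambda_p(f,1_K)=1$, $N_f/N=1$, and these $f$ are exactly $\mathcal F^\new(N)\cap\mathcal F(N;1_K)$) and its level-$N_1$ part, and discarding the non-admissible level-$N$ forms by non-negativity of central values, shows that the left-hand side of \cref{lowbdM} is at least $\tfrac1N A_0(N)$ minus $\tfrac1p\cdot\tfrac{C(K;N)}{4\pi^2}\sum_{f\in\mathcal F_0(N;1_K),\,N_f=N_1}\Lambda_p(f,1_K)\,\ell(f)$.

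It remains to bound this last, level-$N_1$ sum. For $f$ new of level $N_1$ (so $p\nmid N_f$), formula \eqref{lambdaMQ} with $\chi(\frakp)=1$ collapses to $\Lambda_p(f,1_K)=2p/(p+1+a_p(f))$, and the Ramanujan bound $|a_p(f)|\le 2\sqrt p$ gives $\Lambda_p(f,1_K)\le 2p/(\sqrt p-1)^2=\Xi(p)$. Moreover $\{f\in\mathcal F_0(N;1_K):N_f=N_1\}$ coincides with $\mathcal F^\new(N_1)\cap\mathcal F(N_1;1_K)$, so applying \eqref{eq:lvalQ} and the \emph{upper} bound of \cref{cor:semistable} to a maximal order $\calO_{\max}\supset\frako_K$ of $B$ (of level type $(N_1,1,1)$), and using $C(K;N)=C(K;N_1)$, bounds $\tfrac{C(K;N)}{4\pi^2}\sum_{f:N_f=N_1}\ell(f)$ above by $h_K\sum_j w_jh_j'-h_K^2\,c(N_1,1,1)\le 3h_K^2-h_K^2\,c(N_1,1,1)$, using $w_B\le 3$ since $D_B=N_1>3$. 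Assembling the three estimates, the leftover error works out to $h_K^2\,c(N_1,1,1)\bigl(\Xi(p)/p-1/(p+1)\bigr)$, which is $\ge 0$ since $\Xi(p)\ge 2\ge p/(p+1)$; hence the left-hand side of \cref{lowbdM} is at least $u_Kh_K-3h_K^2\Xi(p)/p=h_K(u_K-3h_K\Xi(p)/p)$.

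The main obstacle is the bookkeeping of the error terms: one must keep the Eisenstein term $h_K^2\,c(N_1,1,1)$ in the upper bound for the oldform sum rather than crudely bounding it by $3h_K^2$, and notice the identity $c(N_1,1,p)=c(N_1,1,1)/(p+1)$, so that the elementary inequality $\Xi(p)/p\ge 1/(p+1)$ exactly absorbs the residual Eisenstein contributions; a naive estimate leaves a stray negative term. Secondary points are the set identifications ($\mathcal F_0(N)=\mathcal F(N)$, and $\{f\in\mathcal F_0(N;1_K):N_f=N_1\}=\mathcal F^\new(N_1)\cap\mathcal F(N_1;1_K)$) and checking that splitness of $K$ at $p$ gives $C(K;N)=C(K;N_1)$.
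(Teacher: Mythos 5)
Your proposal is correct and is essentially the paper's own argument: the paper's proof (``define $A_0(N)$, $A(N)$ as before, use $\Lambda_p(f,1_K)\le\Xi(p)$ from \eqref{lambdaMQ} and Deligne's bound, then $A_0(N)\ge A(N)-\Xi(p)A(N/p)$ and proceed as in \cref{lowbdN}'') is exactly your splitting into the level-$N$ and level-$N_1$ parts, with the lower bound of \cref{cor:semistable} for the Eichler order of level $N_1p$ and the upper bound (with $w_B\le 3$) for the level-$N_1$ order. You have merely written out the details the paper leaves implicit, e.g.\ the closed form $\Lambda_p(f,1_K)=2p/(p+1+a_p(f))$, the identity $c(N_1,1,p)=c(N_1,1,1)/(p+1)$, and the observation that $\Xi(p)/p\ge 1/(p+1)$ absorbs the residual Eisenstein terms.
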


\begin{proof}
Define quantities $A_0(N)$ and $A(N)$ as in the previous proof.
By Deligne's bound $a_p(f) \le 2 \sqrt p$ and \eqref{lambdaMQ}, we have
\[ \frac 1{1+p^{-1}} \le \Lambda_p(f,1_K) \le \Xi(p) \]
for $f \in \mathcal F(N_1; 1_K)$.
Then use $A_0(N) \ge A(N) - \Xi(p) A(N/p)$ and proceed as before.
\end{proof}

Note $\Xi(p)$ is a decreasing function in $p$.  In particular, $\Xi(p) < 4$ if $p \ge 13$ and
$\Xi(p) < 3$ if $p \ge 31$.  Hence one of the $L$-values appearing on the left
of \cref{lowbdM}
must be nonzero if $p \ge \max \{ \frac{12h_K}{u_K}, 13 \}$ or if
$p \ge \max \{ \frac{9h_K}{u_K}, 31 \}$.

\section{Examples} \label{sec:ex}

In this section, we illustrate our formulas with a few examples when $F=\Q$.  
In each of examples, we numerically computed the relevant $L$-values to
serve as a quality check for our formulas.
We performed our calculations using a combination of Magma \cite{magma}
and Sage \cite{sage}.  
We also used Dan Collins' Sage code to compute
Petersson norms (see \cite{collins}).

\begin{ex} \label{ex:11}
Let $N=D_B = 11$, so $\calO$ is a maximal order.
Then $n=2$ and there are 2 maximal orders up to isomorphism,
one with 4 units and one with 6 units.  Order the ideal classes
$\calI_1, \calI_2$ so that $w_1 = 2$ and $w_2 = 3$.  We may
write $S(\calO) = \langle \phi \rangle$, where $\phi(x_1) = 2$,
$\phi(x_2) = -3$.  Then $\phi$ corresponds to the unique newform
$f \in S_2(11)$.  

Quadratic forms associated to $\calI_1$ and $\calI_2$ are
$Q_1(x,y,z,w) = \frac{(2x+z)^2+(2y+w)^2+11z^2+11w^2}4$ and
$Q_2(x,y,z,w) = \frac{(4x+z+2w)^2+(4y+2z-w)^2+11z^2+11w^2}4$.
By \eqref{brandt-diagonal}, the diagonal Brandt matrix entries are 
$a_{11}(n) = \frac{r_{Q_1}(n)}4$ and $a_{22}(n) = \frac{r_{Q_2}(n)}6$.
Eichler's work on the basis problem shows $a_n(f) = 3a_{22}(n) - 2a_{11}(n) =
\frac{r_{Q_2}(n)-r_{Q_1}(n)}2$ for all $n$.  Comparing this with the trace of the
$p$-th Brandt matrix shows, for $p \ne 11$, we have
$a_p(f) = a_{11}(p) + a_{22}(p) - p - 1 = \frac {5r_{Q_1}(p)-12p-12}8$.

First suppose $K=\Q(i)$.  Then necessarily $h_1 = 1$ and $h_2 = 0$
so $\frac{|P_K(\phi)|^2}{(\phi, \phi)} = \frac 45$.  From \eqref{eq:lvalQ} or 
\cref{main-thm:Q}, we see 
\[ \frac{L_\fin(\frac 12, f)L_\fin(\frac 12, f \otimes \eta_K)}{(f,f)} = \frac{4\pi^2}5. \]
Consider a prime $p \ne 11$.  Then by \cref{main-thm:Q} with $m=p$, we have
\[ a_p(f) = \frac{5}{4\pi^2} \frac{a_p(f) L_\fin(\frac 12, f)L_\fin(\frac 12, f \otimes \eta_K)}{(f,f)} = \frac {5r_{Q_1}(p)-12p-12}8. \]
This independently recovers the above formula for $a_p(f)$ which arose from linear relations of theta series.
 
Next suppose $K=\Q(\sqrt{-11})$ which also has class number 1.  One can check 
that $\frako_K$ embeds in $\calO_\ell(\calI_1)$, so $h_1 = 1$ and $h_2 = 0$.
Again we have $\frac{|P_K(\phi)|^2}{(\phi, \phi)} = \frac 45$.  Then \eqref{eq:lvalQ} 
or \cref{main-thm:Q} implies that
\[ C(\Q(\sqrt{-11}), 11)L_\fin(\frac 12, f \otimes \eta_{\Q(\sqrt{-11})}) = C(\Q(i), 11) L_\fin(\frac 12, f \otimes \eta_{\Q(i)}). \]
Hence
\[ \frac{L_\fin(\frac 12, f \otimes \eta_{\Q(\sqrt{-11})})}{L_\fin(\frac 12, f \otimes \eta_{\Q(i)})}
= \frac{4}{\sqrt{11}},\]
which agrees with numerical calculations.  This provides a check on our local factors
when $K$ is ramified at a prime $p | N_1$.
\end{ex}

\begin{ex} \label{ex:22}
Let $D_B = 11$ and $N = 22$.  Then $n=3$ and we may order the ideal
classes so that $w_1 = 2$,  $w_2 = w_3 = 1$.  We can take an orthogonal basis
of $S(\calO)$ to be $\{ \phi_1, \phi_2 \}$ where $\phi_1(x_1) = \phi_1(x_2) = 2$,
$\phi_1(x_3) = -3$ and $\phi_2(x_1) = 2$, $\phi_2(x_2) = -1$, $\phi_2(x_3) = 0$.
(There is a maximal order $\calO_B \supset \calO$ such that as
double cosets $\Cl(\calO_B) = \{ x_1 \sqcup x_2, x_3 \}$ and $\phi_1$ the
old eigenform in $S(\calO_B)$ denoted $\phi$ in the previous example.)
Here $S(\calO)$ is the $\hat \calO^\times$-invariant subspace of the cuspidal
representation $\pi$ of $B^\times(\A_\Q)$ corresponding to the newform $f \in S_2(11)$.

Suppose $K=\Q(\sqrt{-15})$ so $h_K = 2$.  We compute $h_1 = 0$ and $h_2 = h_3 = 1$.
Hence 
\[ \frac{|P_K(\phi_1)|^2}{(\phi_1,\phi_1)} + \frac{|P_K(\phi_2)|^2}{(\phi_2,\phi_2)} =
\frac 1{15} + \frac 13 = \frac 25.
\]
Since $a_2(f) = -2$, we see $\Lambda_2(f, 1_K) =4$ from \eqref{lambdaMQ}.  
Then \eqref{eq:lvalQ} tells us that
\[ \frac{L_\fin(\frac 12, f)L_\fin(\frac 12, f \otimes \eta_K)}{(f,f)} 
= \frac{8\pi^2}{5 \sqrt{15}}.
\]

If $\chi$ is the nontrivial character of $\Cl(K)$, we see
\[ \frac{|P_{K,\chi}(\phi_1)|^2}{(\phi_1,\phi_1)} + \frac{|P_{K,\chi}(\phi_2)|^2}{(\phi_2,\phi_2)} = \frac {25}{15} + \frac 13 = 2.
\]
Since the Hilbert class field $H_K$ of $K$ is unramified at a prime
$\frakp_2$ of $K$ lying above 2, we see $\chi(\frakp_2) = -1$, and get
$\Lambda_2(f,\chi) = \frac 45$.
Thus \eqref{eq:lvalQ} tells us that
\[ \frac{L_\fin(\frac 12, f, \chi)}{(f,f)} = \frac {8\sqrt{15} \pi^2}3. \]

We numerically verified both of these $L$-values in Magma in
terms of elliptic curve base change  $L$-values.  
Namely, if $E$ is an elliptic curve of conductor 11 associated to $f$, 
then $L(\frac 12, f, 1_K) = L(\frac 12, E_K)$, and 
$L_\fin(\frac 12, f, \chi) = L(\frac 12, E_{H_K}) / L(\frac 12, E_K)$.

While we could have obtained these $L$-values using the maximal order $\calO_B$,
this example provides a check on our formulas \eqref{lambdaM}, \eqref{lambdaMQ}
for the local factors $\Lambda_p(f,\chi)$ when $p | M$. 
\end{ex}

\begin{ex} \label{ex:27}
Let $N=27$, so $D_B = 3$.  Since $\dim S_2(27) = 1$, we have $n = 2$.  
We also have $m(\calO) = \frac 32$,
so we can order $\Cl(\calO)$ such that $w_1 = 2, w_2 = 1$.  Thus $S(\calO)$ is
spanned by the function $\phi$ given by $\phi(x_1) = 2$, $\phi(x_2) = -1$, and
$\phi$ corresponds to the unique newform $f \in S_2(27)$.

Suppose $K = \Q(i)$.  Then $h_1 = 1$ and $h_2 = 0$, so $\frac{|P_K(\phi)|^2}{(\phi,\phi)} =
\frac 43$.  Thus by \cref{main-thm} (or \eqref{eq:lvalQ}) we have
\[ \frac{L_\fin(\frac 12, f)L_\fin(\frac 12, f \otimes \eta_K)}{(f,f)} = \frac {4 \pi^2}3, \]
which matches numerical computations.  Note $N=27$ is excluded from
\cref{thm:prime} and this example shows \eqref{prime:double} does not hold
when $N=27$.
\end{ex}

\begin{ex} Let $N=75$ with $D_B = 5$.  
This corresponds to the triple $(1,25,3)$, which is balanced so each $w_i = 1$.  
Here $n=8$ with $\dim \Eis(\calO) = 2$.  Let $g$ be the unique newform in 
$S_2(15)$.  Note $S_2^\new(75)$ is generated by 3 newforms, say $f_1, f_2, f_3$.
One of them, say $f_1$, is a quadratic twist of $g$, and thus locally a ramified
twist of Steinberg at 5.  The other two, $f_2$ and $f_3$, are quadratic twists of each other
and minimal at 5, whence locally supercuspidal at 5.  We remark that \cref{JL} 
tells us that  $S(\calO) \simeq \C g \oplus \C f_1 \oplus 2 \C f_2 \oplus 2 \C f_3$.

Let $K=\Q(\sqrt{-5})$, so $h_K = 2$ and $K$ is split at 3 and ramified at 5.
Then \cref{main-thm:Q} says
\[ \frac{5 \sqrt 5}{12 \pi^2} \sum_{\chi \in \widehat{\Cl}(K)} \left( \sum_{i=1}^3 \Lambda_{5}(f_i)\frac{L_\fin(\frac 12, f_i, \chi)}{(f_i,f_i)}  +  \frac 15  \frac{L_\fin(\frac 12, g, \chi)}{(g,g)} \right) = 3. \]
From \eqref{lambdaN2}, we see that $\Lambda_{5}(f_1) = \frac{24}{25}$ and
$\Lambda_{5}(f_2) = \Lambda_{5}(f_3) = \frac 65$.
This agrees with numerical calculations of base change elliptic curve $L$-values.
\end{ex}

\section{Appendix: Guide to Hypotheses} \label{sec:appendix}

For the convenience of the reader, 
we present in \cref{table:1} a summary of our primary 
assumptions on $F$, the level type $(\frakN_1$, $\frakN_2$, $\frakM)$ or the level $\calO$,
and the CM extension $K/F$ 
for various global results in this paper.  
Some ``exceptional'' hypotheses 
(e.g., $N \ge 11$ and $N \ne 27$ in \cref{thm:prime}, or coprimality of $D_B$ and 
$D_K$ in \cref{cor:stable}) are excluded for brevity.

We refer to \cref{sec:notation} for general remarks on notation, with
\cref{sec:spec} explaining our standing conventions on 
($\frakN_1$, $\frakN_2$, $\frakM$).  The definitions of
nice level types and admissibility of $K$ are
given in \cref{sec:12} when $F=\Q$, and in \cref{sec:emb} in general.  
The notation (P) in the bottom entries of the table refers to a parity condition
on the number of primes dividing $N_0$ or $N_1$.

\begin{center}
\begin{table}
\begin{tabular}{c|c|c|c}
& $F$ & ($\frakN_1$, $\frakN_2$, $\frakM$) or $\calO$  & CM $K/F$ \\
\hline
\cref{thm:prime} & $\Q$ & ($p^{2r+1}$, 1, 1) & non-split at $p$; \\
&&& inert at $p$ if $r > 0$ \\ 
\cref{cor:12} & $\Q$ & ($p^{2r+1}$, 1, 1) & inert at $p$ \\ 
\cref{main-thm:Q} & $\Q$ & nice level type & admissible \\
\cref{thm2}(1)& $\Q$ & nice level type, balanced & admissible \\
\cref{thm2}(2)& $\Q$ & nice level type & admissible, stable range \\
\cref{thm2}(3)& $\Q$ & nice level type & admissible, $u_K > 1$ \\
\cref{cor:stable} & $\Q$ & nice level type with $N_2 = 1$ & admissible, stable range  \\
\cref{JL}  & --- & $\frakN_2$ cubefree & --- \\
\cref{col-orthog} & --- & --- & --- \\
\cref{prop:per-avg} & --- & --- & $\frako_K \subset \calO$ \\
\cref{cor:per-avg} & --- & --- & $\frako_K \subset \calO$ \\
\cref{cor:per-avg-bal}(i) & --- & $\calO$ balanced  & $\frako_K \subset \calO$ \\
\cref{cor:per-avg-bal}(ii) & $h_F^+=1$ & $\calO$ unram.\ quad.\ & $\frako_K \subset \calO$ \\
\cref{cor:semistable} & --- & --- & $\frako_K \subset \calO$ \\
\cref{pichi-adm} & $h_F = 1$ & nice level type & $\frako_K \subset \calO$ \\
\cref{lval:rel} & $h_F = 1$ & nice level type & admissible \\
\cref{main-thm} & $h_F = 1$ & nice level type & admissible \\
\eqref{eq:lower-bound1} & $\Q$ & $N=N_1$ sqfree, (P) & $(N, 1, 1)$-admissible \\
\cref{lowbdN}(i) &  $\Q$ & $N = p^{2r+1} N_0$, $pN_0$ sqfree, (P)
& $(N, 1, 1)$-admissible \\
\cref{lowbdN}(ii) &  $\Q$ & $N = p^{2} N_0$, $pN_0$ sqfree, (P)
& $(N_0, p^2, 1)$-admissible \\
\cref{lowbdM} & $\Q$ & $N = p N_1$ sqfree, (P)
& $(N_1, 1, p)$-admissible \\
\end{tabular}
\caption{Table of hypotheses.}
\label{table:1}
\end{table}
\end{center}

%
%

\begin{bibdiv}
\begin{biblist}

\bib{BEN}{article}{
   author={Bennett, Michael A.},
   author={Ellenberg, Jordan S.},
   author={Ng, Nathan C.},
   title={The Diophantine equation $A^4+2^\delta B^2=C^n$},
   journal={Int. J. Number Theory},
   volume={6},
   date={2010},
   number={2},
   pages={311--338},
   issn={1793-0421},
}

\bib{magma}{article}{
   author={Bosma, Wieb},
   author={Cannon, John},
   author={Playoust, Catherine},
   title={The Magma algebra system. I. The user language},
   note={Computational algebra and number theory (London, 1993)},
   journal={J. Symbolic Comput.},
   volume={24},
   date={1997},
   number={3-4},
   pages={235--265},
   issn={0747-7171},
}

\bib{brzezinski:class}{article}{
   author={Brzezinski, J.},
   title={A combinatorial class number formula},
   journal={J. Reine Angew. Math.},
   volume={402},
   date={1989},
   pages={199--210},
   issn={0075-4102},
}

\bib{collins}{unpublished}{
   author={Collins, Dan J.},
   title={Numerical computation of Petersson inner products and q-expansions},
   note={Preprint, \url{https://arxiv.org/abs/1802.09740v1}},  
}

\bib{DV}{article}{
   author={Demb{\'e}l{\'e}, Lassina},
   author={Voight, John},
   title={Explicit methods for Hilbert modular forms},
   conference={
      title={Elliptic curves, Hilbert modular forms and Galois deformations},
   },
   book={
      series={Adv. Courses Math. CRM Barcelona},
      publisher={Birkh\"auser/Springer, Basel},
   },
   date={2013},
   pages={135--198},
}

\bib{ellenberg}{article}{
   author={Ellenberg, Jordan S.},
   title={Galois representations attached to $\Bbb Q$-curves and the
   generalized Fermat equation $A^4+B^2=C^p$},
   journal={Amer. J. Math.},
   volume={126},
   date={2004},
   number={4},
   pages={763--787},
   issn={0002-9327},
}

\bib{FW}{article}{
   author={Feigon, Brooke},
   author={Whitehouse, David},
   title={Averages of central $L$-values of Hilbert modular forms with an
   application to subconvexity},
   journal={Duke Math. J.},
   volume={149},
   date={2009},
   number={2},
   pages={347--410},
   issn={0012-7094},
}

\bib{FMP}{article}{
   author={File, Daniel},
   author={Martin, Kimball},
   author={Pitale, Ameya},
   title={Test vectors and central $L$-values for ${\rm GL}(2)$},
   journal={Algebra Number Theory},
   volume={11},
   date={2017},
   number={2},
   pages={253--318},
   issn={1937-0652},
}

\bib{gross}{article}{
   author={Gross, Benedict H.},
   title={Heights and the special values of $L$-series},
   conference={
      title={Number theory},
      address={Montreal, Que.},
      date={1985},
   },
   book={
      series={CMS Conf. Proc.},
      volume={7},
      publisher={Amer. Math. Soc., Providence, RI},
   },
   date={1987},
   pages={115--187},
}

\bib{hida}{article}{
   author={Hida, Haruzo},
   title={Congruence of cusp forms and special values of their zeta
   functions},
   journal={Invent. Math.},
   volume={63},
   date={1981},
   number={2},
   pages={225--261},
   issn={0020-9910},
}

\bib{hijikata}{article}{
   author={Hijikata, Hiroaki},
   title={Explicit formula of the traces of Hecke operators for $\Gamma
   _{0}(N)$},
   journal={J. Math. Soc. Japan},
   volume={26},
   date={1974},
   pages={56--82},
   issn={0025-5645},
}

\bib{HPS}{article}{
   author={Hijikata, H.},
   author={Pizer, A.},
   author={Shemanske, T.},
   title={Orders in quaternion algebras},
   journal={J. Reine Angew. Math.},
   volume={394},
   date={1989},
   pages={59--106},
   issn={0075-4102},
}

\bib{LMY}{article}{
   author={Liu, Sheng-Chi},
   author={Masri, Riad},
   author={Young, Matthew P.},
   title={Rankin-Selberg $L$-functions and the reduction of CM elliptic
   curves},
   journal={Res. Math. Sci.},
   volume={2},
   date={2015},
   pages={Art. 22, 23},
   issn={2522-0144},
}

\bib{me:cong}{article}{
   author={Martin, Kimball},
   title={The Jacquet-Langlands correspondence, Eisenstein congruences, and
   integral $L$-values in weight 2},
   journal={Math. Res. Lett.},
   volume={24},
   date={2017},
   number={6},
   pages={1775--1795},
   issn={1073-2780},
   note={Corrected version available at \url{https://arxiv.org/abs/1601.03284v4}}
}

\bib{me:cong2}{article}{
   author={Martin, Kimball},
   title={Congruences for modular forms mod 2 and quaternionic $S$-ideal
   classes},
   journal={Canad. J. Math.},
   volume={70},
   date={2018},
   number={5},
   pages={1076--1095},
   issn={0008-414X},
}

\bib{me:basis}{article}{
   author={Martin, Kimball},
   title={The basis problem revisited},
   journal={Trans. Amer. Math. Soc.},
   volume={373},
   date={2020},
   number={7},
   pages={4523--4559},
   issn={0002-9947},
   review={\MR{4127854}},
   doi={10.1090/tran/8077},
}

\bib{MW}{article}{
   author={Martin, Kimball},
   author={Whitehouse, David},
   title={Central $L$-values and toric periods for ${\rm GL}(2)$},
   journal={Int. Math. Res. Not. IMRN},
   date={2009},
   number={1},
   pages={Art. ID rnn127, 141--191},
   issn={1073-7928},
}

\bib{me:zeroes}{article}{
   author={Martin, Kimball},
   author={Wiebe, Jordan},
   title={Zeroes of quaternionic modular forms and central $L$-values},
   journal={J. Number Theory},
   volume={217},
   date={2020},
   pages={460--494},
   issn={0022-314X},
}

\bib{MR}{article}{
   author={Michel, Philippe},
   author={Ramakrishnan, Dinakar},
   title={Consequences of the Gross-Zagier formulae: stability of average
   $L$-values, subconvexity, and non-vanishing mod $p$},
   conference={
      title={Number theory, analysis and geometry},
   },
   book={
      publisher={Springer, New York},
   },
   date={2012},
   pages={437--459},
}

\bib{MV}{article}{
   author={Michel, Philippe},
   author={Venkatesh, Akshay},
   title={Heegner points and non-vanishing of Rankin/Selberg $L$-functions},
   conference={
      title={Analytic number theory},
   },
   book={
      series={Clay Math. Proc.},
      volume={7},
      publisher={Amer. Math. Soc., Providence, RI},
   },
   date={2007},
   pages={169--183},
}

\bib{nelson}{article}{
   author={Nelson, Paul D.},
   title={Stable averages of central values of Rankin-Selberg $L$-functions:
   some new variants},
   journal={J. Number Theory},
   volume={133},
   date={2013},
   number={8},
   pages={2588--2615},
   issn={0022-314X},
}

\bib{NPS}{article}{
   author={Nelson, Paul D.},
   author={Pitale, Ameya},
   author={Saha, Abhishek},
   title={Bounds for Rankin-Selberg integrals and quantum unique ergodicity
   for powerful levels},
   journal={J. Amer. Math. Soc.},
   volume={27},
   date={2014},
   number={1},
   pages={147--191},
   issn={0894-0347},
}

\bib{pi}{article}{
   author={Pi, Qinghua},
   title={Central values of twisted base change $L$-functions associated to
   Hilbert modular forms},
   journal={J. Number Theory},
   volume={191},
   date={2018},
   pages={213--245},
   issn={0022-314X},
}

\bib{sage}{manual}{
      author={Developers, The~Sage},
       title={{S}agemath, the {S}age {M}athematics {S}oftware {S}ystem
  ({V}ersion 8.9)},
        date={2019},
        label={Sage},
        note={{\tt https://www.sagemath.org}},
}

\bib{saito}{article}{
   author={Saito, Hiroshi},
   title={On Tunnell's formula for characters of ${\rm GL}(2)$},
   journal={Compositio Math.},
   volume={85},
   date={1993},
   number={1},
   pages={99--108},
   issn={0010-437X},
}

\bib{ST}{article}{
   author={Sugiyama, Shingo},
   author={Tsuzuki, Masao},
   title={Relative trace formulas and subconvexity estimates for
   $L$-functions of Hilbert modular forms},
   journal={Acta Arith.},
   volume={176},
   date={2016},
   number={1},
   pages={1--63},
   issn={0065-1036},
}

\bib{tunnell:1978}{article}{
   author={Tunnell, Jerrold B.},
   title={On the local Langlands conjecture for $GL(2)$},
   journal={Invent. Math.},
   volume={46},
   date={1978},
   number={2},
   pages={179--200},
   issn={0020-9910},
}

\bib{tunnell}{article}{
   author={Tunnell, Jerrold B.},
   title={Local $\epsilon $-factors and characters of ${\rm GL}(2)$},
   journal={Amer. J. Math.},
   volume={105},
   date={1983},
   number={6},
   pages={1277--1307},
   issn={0002-9327},
}

\bib{voight:book}{book}{
   author={Voight, John},
   title={Quaternion algebras},
   series={Graduate Texts in Mathematics},
   volume={288},
   publisher={Springer, Cham},
   year={2021},
   pages={xxiii+885},
   isbn={978-3-030-56692-0},
   isbn={978-3-030-56694-4},
}

\end{biblist}
\end{bibdiv}

\end{document}